\tikzset{->-/.style={decoration={
			markings,
			mark=at position #1 with {\arrow{>}}},postaction={decorate}}}
\theoremstyle{plain}
\newtheorem{theorem}{Theorem}[section]
\newtheorem{lemma}[theorem]{Lemma}
\theoremstyle{definition}
\newtheorem{remark}[theorem]{Remark}
\theoremstyle{remark}
\newtheorem*{ack*}{\textbf{Acknowledgement}}
\newcommand{\Rmnum}[1]{\expandafter\@slowromancap\romannumeral #1@}
\begin{document}
	\title[$ \delta $-shocks in realistic Chaplygin Aw-Rascle model]{Intrinsic phenomena of delta shock waves in a more realistic Chaplygin Aw-Rascle model}
	\author[M. Zafar]{Priyanka, \, M. Zafar}
	\address{Department of Mathematics \& Computing, Dr. B.R. Ambedkar NIT Jalandhar, India\\ Phone: +91-0181-5037700}
	\email[Priyanka]{priyanka.ma.21@nitj.ac.in}
	\email[M.Zafar]{zafarm@nitj.ac.in}
	\begin{abstract}
		The motivation of this study is to find the Riemann solutions of Aw-Rascle model with a more realistic version of extended Chaplygin gas. Firstly, we establish the Riemann solutions with two different structures, viz., a shock wave followed by the contact discontinuity and a rarefaction wave followed by the contact discontinuity. Further, by analyzing the limiting behavior, it is found that one of the Riemann solutions converges to $ \delta $-shock solution as the pressure approaches to generalized Chaplygin gas pressure. Moreover, numerical simulations have been performed to validate the theoretical analysis.

		\noindent \textbf{Keywords.} Riemann problem;  Chaplygin gas; Delta shock;  Transport equations; Contact discontinuity
	\end{abstract}
	\maketitle
	\section{introduction}

	The formation of delta shock waves in the solutions of Riemann problems is one of the major challenges to contemporary mathematical research. It has not only physical importance such as mass accumulation and the creation of galaxies in the universe  \cite{yang2012, li1998, panov2006, liu2020, xin2024, li2024a, zhang2021, zhang2024}, but also some special interest and difficulty in mathematics. The research on delta shock waves has been developed in the last three decades; several researchers have achieved numerous outstanding achievements  involving non-classical solutions for various hyperbolic system of conservation laws \cite{tan1994d, nedeljkov2004, danilov2005, zhang2023, sheng1999, li1998}; for more details see the references cited therein. Unlike the ordinary shock, $ \delta $-shock is an over-compressive shock in which more  characteristics impinge to  the line of discontinuity.

	To reveal some more intrinsic phenomena of delta shock waves in the theory of hyperbolic conservation laws, we consider the following system of conservation laws \cite{cheng2014}
 \begin{equation}\label{p1}
 	\begin{cases}
 		\varrho_{t}+(\varrho \upsilon)_{x}=0,
 		\\(\varrho (\upsilon+p))_{t}+(\varrho \upsilon (\upsilon+p))_{x}=0,
 	\end{cases} 
 \end{equation}with a more realistic version of extended Chaplygin gas \cite{mzafar2022, barthwal2022}
\begin{equation} \label{p2}
	p=A\left( \frac{\varrho}{1-a\varrho}\right)^{\Gamma}-\frac{B}{\varrho^{\kappa}}, \quad (1\leq\Gamma\leq3, \,0<\kappa\leq1),     
\end{equation}
 where $ \varrho>0 $, $ \upsilon $ $ \geq  0,$ and $ p,$  stand for the density, velocity, and pressure,  respectively. More details about the model \eqref{p1} can be found in \cite{zhang2002, daganzo1995}. In the equation of state \eqref{p2}, $a$ is the
 van der Waals excluded volume which sets a limit on the density $\varrho_{max}=1/a$ \cite{zhang2021} implying thereby that it is necessary to consider the limit $a \to 0$ while studying the formation of delta shock waves in the Riemann solutions of \eqref{p1}-\eqref{p2}. In addition, for $a=0,$ the parameter $\Gamma$ recovers the barotropic fluid having an equation of state up to order three; indeed, it captures the more complex behaviors of dark energy and dark matter \cite{pourhassan2014}. The parameter $\kappa$ describes the transition of generalized Chaplygin gas from dark matter to dark energy, influencing the late time evolution of the gravitational potential and large scale structure whereas the parameter $B$ determines the late time vacuum energy density that drives the late time accelerated expansion of the universe \cite{bento2002}. The parameter $A$ governs early time dynamics which describes the behaviors of fluid at high densities. For $A\to 0,$ the equation of state \eqref{p2} converts into the generalized Chaplygin gas and this limit recovers the original unification of dark matter and dark energy via a single fluid, \textit{i.e.,} at high density, the fluid pressure is negligible (dark matter) and at low density fluid behaves like dark energy (vacuum-like negative pressure).

 Pan and Han \cite{pan2013} studied the model \eqref{p1} with Chaplygin pressure and obtained that their Riemann solutions coincide with the Riemann solutions of pressureless gas dynamics when the pressure drops to zero. Shen and Sun  \cite{shen2010} analyzed the appearance of $ \delta $-shock and vacuum states in the perturbed Aw-Rascle model. Also, they \cite{shen2024} established the limiting behavior of the Riemann solutions for hydrodynamic Aw-Rascle traffic flow model. In 2018, Yin and Chen  \cite{yin2018} investigated the Riemann problem along with the stability of Riemann solutions to the non-homogeneous Aw-Rascle model. Zhang \cite{zhang2016} precisely analyzed the vanishing pressure limit of the Riemann solutions of Chaplygin Aw-Rascle model with friction term. Li \cite{li2022riemann} obtained that their Riemann solutions lose self-similarity nature due to the presence  of friction term. Shao in his motivational work \cite{shao2009}, established the global existence of a unique weakly discontinuous solution for the first order quasi-linear hyperbolic systems; indeed, the global existence of classical discontinuous solutions to genuinely nonlinear hyperbolic systems is discussed for a small bounded variations (BV) perturbations of the Riemann initial data (see, \cite{shao2013}).

 
 As the pressure  $p$ falls to zero, the model \eqref{p1} converts into the following transport equations \cite{brenier1998, weinan1996}
	\begin{equation}\label{p3}
		\begin{cases}
			\varrho_{t}+(\varrho \upsilon)_{x}=0,
			\\(\varrho \upsilon)_{t}+(\varrho \upsilon^{2} )_{x}=0,
		\end{cases} 
	\end{equation}
	which describe the motion of free particles stuck under collision. Since 1994, numerous researchers have extensively studied the transport equations \cite{ huang2001, weinan1996, sheng1999, lij2001, li1998, zafar2016, zhang1989}; in particular, E, Rykov and Sinai \cite{weinan1996} investigated the behavior of global weak solution to $1$-D Riemann problem with random initial data. With the use of vanishing viscosity and characteristic analysis method, Sheng and Zhang \cite{sheng1999} studied the solutions of $1$-D and $2$-D Riemann problems. Li and Yang \cite{lij2001} examined $1$-D Riemann problem and obtained multidimensional planar delta shock waves depending on one-parameter family. Shao \cite{shao2023} proved that the Riemann solutions of Aw-Rascle model coincide with the Riemann solutions of the pressureless gas dynamics system as the traffic pressure converges to zero. Further, the appearance of the delta shock wave is discussed for the relativistic full Euler system with generalized Chaplygin proper energy density–pressure relation under some special conditions on the initial data (see, \cite{shao2018}).
	 Li et. al \cite{li2011} studied the interactions of rarefaction waves for $ 2 $-D Euler equations. Very recently, Jiang and Shen \cite{jiang2024} constructed the Riemann solutions of isothermal three-component model with four different structures. Zhang et. al \cite{zhang2024a} found that the delta shock is formed in the Riemann solutions of logarithmic Euler equations for certain initial data. Kipgen and Singh \cite{kipgen2023}  studied the formation of $\delta$-shocks and vacuum states in the Riemann solution of isothermal van der Waals dusty gas under the flux approximation.

	In the current study, we focus on the system \eqref{p1}-\eqref{p2} with the following initial data 
	 \begin{equation} \label{p4}
		(\varrho, \upsilon)_{t=0} =	\begin{cases}
			(\varrho_{l}, \upsilon_{l}), \quad x<0,
			\\ (\varrho_{r}, \upsilon_{r}), \quad x>0.
		\end{cases}
	\end{equation} 
	Using the method of characteristic analysis, first we construct the Riemann solutions of \eqref{p1}-\eqref{p2} and \eqref{p4} with two different structures, viz., a shock wave followed by the contact discontinuity ($ S+J $) and a rarefaction wave followed by the contact discontinuity ($ R+J $). Further, we obtain that the Riemann solution $S+J$ converges to $\delta$-shock solution as pressure tends to Chaplygin pressure. Moreover, numerical simulations are done to check the validity of the process of formation of $\delta$-shock in the limiting case.

	The organization of this paper is as follows. In sections $2,$ we obtain the solutions to the Riemann problem governed by \eqref{p1}-\eqref{p2} and \eqref{p4}. In section $3,$ we analyze the limiting behavior of the Riemann solutions to the conservative system as pressure approaches to the Chaplygin pressure. Finally,
	section $ 4 $ consists numerical simulations to verify theoretical analysis.
	\section{Riemann Solutions for  \eqref{p1}, \eqref{p2} and \eqref{p4}}  
	 The system \eqref{p1}, in matrix form, can be represent as
	\begin{equation}\label{p5}
		\begin{bmatrix}
			\varrho 
			\\ \upsilon 
		\end{bmatrix}_{t} + \begin{bmatrix}
			\upsilon  & \varrho 
			\\	0 & \upsilon  - \varrho p^{\prime}
		\end{bmatrix}
		\begin{bmatrix}
			\varrho 
			\\ \upsilon
		\end{bmatrix}_{x} = \begin{bmatrix}
		0 \\ 0
		\end{bmatrix},\end{equation}
	where	
			$ 	p^{\prime} = \frac{\Gamma A\varrho^{\Gamma-1}}{(1-a\varrho)^{\Gamma+1}} + \frac{\kappa B}{\varrho^{\kappa+1}}. $ 
Thus, the Jacobian matrix has the following eigenvalues and eigenvectors
	\begin{equation*}
		\lambda_{1}=\upsilon -  \varrho p^{\prime}, \,\, \bm{\overrightarrow{r_{1}}}=\left( 1, - p^{\prime}\right) ^{T}, \,\, \lambda_{2}=\upsilon ,
		\,\, \bm{\overrightarrow{r_{2}}}=(1, 0)^{T},
	\end{equation*}
	with $\bigtriangledown\lambda_{1}\cdot\bm{\overrightarrow{r_{1}}}\neq 0$  and $ \bigtriangledown\lambda_{2}\cdot\bm{\overrightarrow{r_{2}}}=0. $  Thus, the wave associated with $ \lambda_{1} $ is either shock or rarefaction wave and the wave associated with $ \lambda_{2} $ is a contact discontinuity. 
	\subsection{Rarefaction wave and contact discontinuity:}
	Under the self-similar transformation $\zeta=x/t,$ the Riemann problem presented by \eqref{p1}-\eqref{p2} and \eqref{p4} transforms into the following:
	\begin{equation}\label{p6}
		\begin{cases}
			-\zeta\varrho_{\zeta} + (\varrho \upsilon)_{\zeta} = 0,
			\\ -\zeta \left( \varrho \left( \upsilon+p\right) \right) _{\zeta}+\left( \varrho \upsilon \left( \upsilon+p\right) \right) _{\zeta}=0,
		\end{cases}
	\end{equation}
	and
	\begin{equation}\label{p7}	(\varrho, \upsilon) = 
		\begin{cases}
			(\varrho_{l}, \upsilon_{l}),\,\quad\zeta=-\infty,
			\\  (\varrho_{r}, \upsilon_{r}),\quad\zeta=+\infty.
		\end{cases}
	\end{equation}
		For any smooth solution, the system \eqref{p6} in matrix form can be written as \cite{smoller2012shock}
		\begin{equation}\label{p2.4}
			(dF-\zeta I)U_{\zeta}=O,
		\end{equation}where
	\begin{equation}\label{p2.5}
	dF	=\begin{bmatrix}
			\upsilon & \rho 
			\\ 0 &\upsilon-\rho p^{\prime}
		\end{bmatrix}, \quad 	I	=\begin{bmatrix}
		1 & 0 
		\\ 0 &1
		\end{bmatrix}, \quad U_{\zeta}= \begin{bmatrix}
			\rho_{\zeta} \\ \upsilon_{\zeta}
		\end{bmatrix}, \quad O= \begin{bmatrix}
		0 \\ 0
		\end{bmatrix}.
	\end{equation}
	\textbf{Case-(i).} If $ U_{\zeta} = O,$ then  the system \eqref{p6}  admits a constant state solution along with \eqref{p7} and it is possible iff $(\varrho_{l}, \upsilon_{l})=(\varrho_{r}, \upsilon_{r}).$\\ \textbf{Case-(ii).} If $ U_{\zeta} \not\equiv O,$ then $U_{\zeta}$ is an eigenvector of $dF$ corresponding to the eigenvalue $\zeta.$ Since $dF$ has two real and distinct eigenvalues, namely, $\zeta=\lambda_{1}=\upsilon-\varrho p^{\prime}$  and 
	$\zeta=\lambda_{2}=\upsilon.$ Therefore, we have the following two elementary wave curves:

\noindent (i) Rarefaction wave (elementary wave associated with $ \zeta= \lambda_{1} $)
\begin{equation}\label{p9}
	R : \begin{cases} \zeta=\lambda_{1}=\upsilon  -  \varrho p^{\prime}, \qquad \upsilon_{l}\leq\upsilon \leq \upsilon_{r},
		\\ \upsilon+p=\upsilon_{l}+p_{l},\quad\qquad \varrho_{r}\leq\varrho \leq \varrho_{l}, 
	\end{cases}
\end{equation}
along with $\frac{d\lambda_{1}}{d\varrho}=-\frac{(\Gamma^{2}+\Gamma)A\varrho^{\Gamma-1}}{(1-a\varrho)^{\Gamma+2}}+\frac{(\kappa^{2}-\kappa)B}{\varrho^{\kappa+1}}<0$ and $ p_{l}=A\left( \frac{\varrho_{l}}{1-a\varrho_{l}}\right) ^{\Gamma}-\frac{B}{\varrho_{l}^{\kappa}} . $

\noindent (ii) Contact discontinuity (elementary wave associated with $ \zeta= \lambda_{2} $)
	\begin{equation}\label{p8}
		J : \, \zeta=\lambda_{2}=\sigma_{2}=\upsilon_{l}=\upsilon_{r},
	\end{equation} Thus, the contact discontinuity $(J)$ connects the left state  $(\varrho_{l}, \upsilon_{l})$ and the right state $(\varrho_{r}, \upsilon_{r})$ iff $\upsilon_{l}=\upsilon_{r}.$

	
	\subsection{Shock wave and contact discontinuity:}
	For a bounded discontinuity at $x=x(t),$ the following Rankine-Hugoniot jump relations hold:
	\begin{equation}
		\begin{cases}\label{p10}
			-\sigma(t)[\varrho]+[\varrho \upsilon]=0,\\
			-\sigma(t)\left[ \varrho \left( \upsilon+p\right) \right] +\left[ \varrho \upsilon \left( \upsilon+p\right) \right]=0, 
		\end{cases}
	\end{equation}
	where $ \sigma(t) =x ^{\prime} (t)$ and $ [\varrho]=\varrho_{l}-\varrho_{r} $ denotes the jump of $\varrho$ across the discontinuity, in which
	$\varrho_{l}=\varrho(x(t)-0, t)$  and 	$\varrho_{r}=\varrho(x(t)+0, t)$ etc. In view of system of equations \eqref{p10} and Lax-entropy conditions, we get the following discontinuities:
	\\(i) Shock wave (discontinuity associated with $ \lambda_{1} $)
	\begin{equation}\label{p11}
		S : \begin{cases} \sigma_{1}=\upsilon_{l}-\frac{\varrho_{r}(p_{r}-p_{l})}{\varrho_{r}-\varrho_{l}}= \upsilon_{r}-\frac{\varrho_{l}(p_{r}-p_{l})}{\varrho_{r}-\varrho_{l}},
			\\ \upsilon_{r}+p_{r}=\upsilon_{l}+p_{l}, \qquad\varrho_{r}>\varrho_{l},
		\end{cases}
	\end{equation}
	(ii) Contact discontinuity (discontinuity associated with $ \lambda_{2} $) 
	\begin{equation}\label{p12}
		J :\,  \sigma_{2}=\upsilon_{l}=\upsilon_{r}.
	\end{equation}

\begin{figure}
	\begin{tikzpicture}
		\tikzset{-<-/.style={decoration={
					markings,
					mark=at position #1 with {\arrow{<}}},postaction={decorate}}}
		\centering
		
		\draw [<->](7,0)node [right]{$\upsilon$} --(0,0) --(0,5) node [left]{$\varrho$};
		\draw [thick][blue] (6.9,0.1) to [out=178,in=287](0,4.2);
		\draw [ultra thick, dashed, orange] (3,0)--(3,4.4);
		\node [above]at(1.5,0.5){${I}$};
		\node [above]at(5,1.5){${II}$};
		\node [above]at(6.8,0.1){${R}$};
		\node [above]at(3.2,3.95){${J}$};
		\node [above]at(0.2,4){${S}$};
		\node [above]at(0,4.03){${\bullet}$};
		\node [above]at(3,0.62){$\bullet$};
		\node [above]at(3.7,0.68){$(\varrho_{l}, \upsilon_l)$};
		\node [below]at(3,0){$\upsilon=\upsilon_l$};
		\node[below] at (0,0){O};				
	\end{tikzpicture}
	\caption{Construction of Riemann solutions in $ (\varrho, \upsilon)-$phase plane.}
	\label{Fig.1}	
\end{figure}

Also, it may be observed that for a given left state  $(\varrho_{l}, \upsilon_{l})$ in the $ (\varrho, \upsilon)$-phase plane, the shock and rarefaction wave curves satisfy the same expression, \textit{i.e.,} $\upsilon+p=\upsilon_{l}+p_{l}.$
Indeed, we have $\upsilon_{\varrho}=-p^{\prime}<0,$ implying thereby that, in respect of $ \varrho,$ the shock and rarefaction wave curves are monotonically decreasing. Moreover, $	\lim\limits_{\varrho\longrightarrow 1/a} \upsilon = -\infty, $ which in turn, implies that the shock curve intersects the $\varrho$-axis and $\lim\limits_{\varrho\longrightarrow0^{+}}\upsilon = + \infty$ infers that the rarefaction wave curve has  $ \upsilon$-axis as the asymptotic line. Therefore, the elementary wave curves divide the first quadrant of $(\varrho, \upsilon)$-plane into the following two distinct regions (see, Figure \ref{Fig.1}): 
\begin{equation*}
	I=\left\{(\varrho, \upsilon)|\, 0\leq \upsilon < \upsilon_{l}, \, 0<\varrho\leq 1/a\right\},
\end{equation*}
\begin{equation*}
	II=\left\{(\varrho, \upsilon)|\, \upsilon \geq \upsilon_{l}, \, 0<\varrho\leq 1/a\right\}.
\end{equation*}By using the method of characteristic analysis, it can be analyzed that the Riemann solution consists of a contact discontinuity and a shock (rarefaction) wave when the right state $ (\varrho_{r}, \upsilon_{r}) $ lies in region-I (region-II), respectively (see, Figures \ref{99}(A) and \ref{99}(B)).

	\begin{figure}
	\centering
	
	\begin{subfigure}[b]{0.45\textwidth}
		\centering
		\begin{tikzpicture}
			\tikzset{->-/.style={decoration={
						markings,
						mark=at position .5 with {\arrow{>}}},postaction={decorate}}}
			\draw [thick,<-](-2.5,5.1)node [right]{$t$} --(-2.5
			,3.9) ;
			
			\draw[thick, ->] (-2.4,0)--(4.4,0)node [below]{$ x $};
			\draw [thick, orange] (0,0)--(3.1,2.7);

			\draw [thick, blue] (0,0)--(0.7,4.3);
			\node[left] at (0.4,1.9) {$ (\varrho_{l}, \upsilon_{l}) $};
			\node[right] at (0.27,1.6) {$ (\varrho_{*}, \upsilon_{*}) $};
			\node[right] at (1.15,0.9) {$ (\varrho_{r}, \upsilon_{r}) $};
			\node[right] at (0.7,4.3) {$ S$};
			\node[right] at (3,2.7) {$ J $};
			
			\node[below] at (0,0) {$ O $};
		\end{tikzpicture}
		\subcaption{If $ (\varrho_{r}, \upsilon_{r}) $ lies in region-I.}
	\end{subfigure}
	\hfill
	\begin{subfigure}[b]{0.45\textwidth}
		\centering
		\begin{tikzpicture}
			\tikzset{->-/.style={decoration={
						markings,
						mark=at position .5 with {\arrow{>}}},postaction={decorate}}}
			\draw [thick,<-](-2.5,5.1)node [right]{$t$} --(-2.5
			,3.9) ;
			
			\draw[thick, ->] (-2.4,0)--(4.4,0)node [below]{$ x $};
			\draw [thick, orange] (0,0)--(3.1,2.65);

			\draw [thick, green] (0,0)--(1.4,4.2);
			\draw [thick, green] (0,0)--(1.1,4.3);
			\draw [thick, green] (0,0)--(0.7,4.4);
			\draw [thick, green] (0,0)--(0.3,4.45);
			\node[left] at (0.2,1.5) {$ (\varrho_{l}, \upsilon_{l}) $};
			\node[right] at (0.57,1.8) {$ (\varrho_{*}, \upsilon_{*}) $};
			\node[right] at (1.15,0.85) {$ (\varrho_{r}, \upsilon_{r}) $};
			\node[right] at (-0.2,4.5) {$ R $};
			\node[right] at (3,2.65) {$ J $};
			\node[below] at (0,0) {$ O $};
			
		\end{tikzpicture}
		\caption{ If $ (\varrho_{r}, \upsilon_{r}) $ lies in region-II.}
	\end{subfigure}
	
	\caption{Riemann solutions of \eqref{p1}-\eqref{p2} and \eqref{p4} in the $ (x, t) $-plane.} 
	\label{99}
\end{figure}

	\section{\mbox{Limit Behavior of Riemann solutions }} 
		\subsection{Limit of Riemann solution when ($\varrho_{r}$, $\upsilon_{r}$) lies in region-I}
	
In view of $ a, A\longrightarrow 0,$ the behavior of Riemann solution is studied   when the right state $ (\varrho_{r}, \upsilon_{r}) $ lies in region-I. It can be easily seen that the curve $ \upsilon= -p+\upsilon_{l}+p_{l}$ tends to the curve $ \upsilon= \frac{B}{\varrho^{\kappa}}+\upsilon_{l}-\frac{B}{\varrho_{l}^{\kappa}}$ as $ a, A\longrightarrow 0.$ Also,  the curve $ \upsilon= \frac{B}{\varrho^{\kappa}}+\upsilon_{l}-\frac{B}{\varrho_{l}^{\kappa}}$ has an asymptotic line $ \upsilon= \upsilon_{l}-\frac{B}{\varrho_{l}^{\kappa}}$ parallel to $ \varrho-$axis.
Moreover, for $ \varrho>\varrho_{l}\, (\varrho<\varrho_{l}), $ the curve $ \upsilon=\frac{B}{\varrho^{\kappa}}+\upsilon_{l}-\frac{B}{\varrho_{l}^{\kappa}}$ lies right (left) to the curve $ \upsilon= -p+\upsilon_{l}+p_{l},$ respectively (see, Figure \ref{Fig.5}). Thus, the region-I is divided into the following two sub-regions I(a) and I(b): 
\begin{equation*}
	I(a)=\left\{(\varrho, \upsilon)|\, 0\leq\upsilon \leq \upsilon_{l}-\frac{B}{\varrho_{l}^{\kappa}}, \, 0<\varrho\leq 1/a\right\},
\end{equation*}
\begin{equation*}
	I(b)=\left\{(\varrho, \upsilon)|\, \upsilon_{l}-\frac{B}{\varrho_{l}^{\kappa}} <\upsilon<\upsilon_{l}, \, 0<\varrho\leq 1/a\right\}.
\end{equation*}
Now, we divide this discussion into the following two cases:\\\textbf{Case (i): Existence of $ \delta-$shock.} We establish the existence of $ \delta-$shock in the Riemann solution of \eqref{p1}-\eqref{p2} and \eqref{p4} when ($\varrho_{r}$, $\upsilon_{r}$) belongs to the region-I(a) as $ a, A\longrightarrow 0. $ For any $ a>0 $ and $ A>0,$ let the intermediate state $ (\varrho_{*}, \upsilon_{*}) $ be connected with $ (\varrho_{l}, \upsilon_{l}) $ by $ S $, and $ (\varrho_{r}, \upsilon_{r}) $ by $ J $ with speeds $ \sigma_{1},$ and $\sigma_{2},$ respectively (see, Figure \ref{99}(A)).  Then, we have
	\begin{equation}\label{p13}
		S : \begin{cases} \sigma_{1}=\upsilon_{l}-\frac{\varrho_{*}(p_{*}-p_{l})}{\varrho_{*}-\varrho_{l}}=\upsilon_{*}-\frac{\varrho_{l}(p_{*}-p_{l})}{\varrho_{*}-\varrho_{l}} ,
			\\ \upsilon_{*}+p_{*}=\upsilon_{l}+p_{l}, \qquad \varrho_{*}>\varrho_{l},
		\end{cases}
	\end{equation}
and
	\begin{equation}\label{p14}
	J : \, \sigma_{2}=\upsilon_{*}=\upsilon_{r},
	\qquad \upsilon_{*}=\upsilon_{r}.
\end{equation}
	Eliminating $ \upsilon_{*} $ from $ \eqref{p13}_{2} $ and $ \eqref{p14} $, we have
	\begin{equation}\label{p15}
		\upsilon_{r}=-p_{*}+\upsilon_{l}+p_{l}.
	\end{equation}\begin{figure}[ht!]
	\begin{tikzpicture}
		\tikzset{->-/.style={decoration={
					markings,
					mark=at position .5 with {\arrow{>}}},postaction={decorate}}}
		\draw [thick,<-](3.1,5)node [left]{$\varrho$} --(3.1,-0.2) ;
		\draw [thick, blue] (9.5,0.5) to [out=178,in=275](3.1,4);
		\draw [thick, green] (9.5,0) to [out=178,in=272](4.4,4.5);
		\node [right] at (3.05,4.05){$ S $};
		\draw[ultra thick, dashed, orange] (6.6,-0.2)--(6.6,4.5);
		\draw [dashed, ultra thick, magenta] (4.3,-0.2)--(4.3,4.5);
		\draw[thick,->] (3.1,-0.2)--(10.1,-0.2);
		\node[right] at (9.87,-0.5){$\upsilon$};
		\node[right] at (9.25,0.7) {$ R $};
		\node[right] at (6.53,4.36){$ J $};
		\node[right] at (6.5,1.05) {$ (\varrho_{l}, \upsilon_{l}) $};
		\node[right] at (6.38,0.83) {$\bullet$};
		\node[right] at (2.88,3.95) {$\bullet$};
		\node[right] at (3.2,0.7) {I(a)};
		\node[right] at (5,0.7) {I(b)};
		\node[right] at (7.6,2) {II};
		\node[below] at (10.4,0.6) {$ \upsilon=\frac{B}{\varrho^{\kappa}}+\upsilon_{l}-\frac{B}{\varrho_{l}^\kappa}$};
		\draw[thick,->] (8.95,0.2)--(8.7,0.13);
		\node[below] at (4.4,-0.2) {$ \upsilon=\upsilon_{l}-\frac{B}{\varrho_{l}^\kappa}$};
		\node[below] at (3,-0.2) {$ O$};
		\node[right] at (6,-0.5) {$\upsilon=\upsilon_{l}$};
	\end{tikzpicture}
	\caption{Elementary wave curves in $ (\varrho, \upsilon)-$phase plane when $ a, A\longrightarrow 0.$}
	\label{Fig.5}
	\end{figure}
	\begin{lemma}\label{l1}
		\begin{equation*}\lim_{a, A\to 0} \varrho_{*} = + \infty.
		\end{equation*}
		\begin{proof}
			Taking  $\lim_{a, A\to 0} $ in \eqref{p15}, with the consideration that $\lim_{a, A\to 0}{ \varrho_{*}} = M $ $ \in $ $ ( \varrho_{l}, + \infty ),$ one can get $\upsilon_{l} - \upsilon_{r} = -\frac{B}{M^{\kappa}} +\frac{B}{\varrho_{l}^{\kappa}}<\frac{B}{\varrho_{l}^{\kappa}}$ which contradicts $ \upsilon_{r} \leqslant \upsilon_{l} -\frac{B}{\varrho_{l}^{\kappa}}.$ Hence, $ \lim_{a, A\to 0} \varrho_{*} = + \infty.$
		\end{proof}
		
	\end{lemma}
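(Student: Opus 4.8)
The plan is to argue by contradiction, working entirely with the single scalar relation \eqref{3.3}, which pins down the intermediate density $\rho_*$ in terms of the fixed left state and $\upsilon_r$ through the parameters $a$ and $A$. First I would record the structural constraint already built into \eqref{3.1}: shock admissibility forces $\rho_* > \rho_l$ for every choice of $a, A > 0$. Consequently, if the claim were to fail, then $\rho_*$ would have to admit a finite accumulation value as $a, A \to 0$; that is, there would exist a subsequence along which $\rho_* \to M$ with $M \in [\rho_l, +\infty)$. So I would assume such a finite $M$ exists and aim to contradict the standing hypothesis of Case~(i), namely $\upsilon_r \leqslant \upsilon_l - \frac{B}{\rho_l^{\kappa}}$.

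Next I would pass to the limit in \eqref{3.3}. The decisive point is that both pressure contributions carrying the factor $A$ drop out: since $M$ is finite and $a \to 0$, the bases $\frac{\rho_*}{1-a\rho_*} \to M$ and $\frac{\rho_l}{1-a\rho_l} \to \rho_l$ stay bounded, so multiplying them by $A \to 0$ sends $A\left(\frac{\rho_*}{1-a\rho_*}\right)^{\Gamma}$ and $A\left(\frac{\rho_l}{1-a\rho_l}\right)^{\Gamma}$ to zero. Only the Chaplygin terms survive, leaving the limiting identity $\upsilon_r = \frac{B}{M^{\kappa}} + \upsilon_l - \frac{B}{\rho_l^{\kappa}}$. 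Because $B > 0$ and $M$ is finite, $\frac{B}{M^{\kappa}} > 0$, whence $\upsilon_r > \upsilon_l - \frac{B}{\rho_l^{\kappa}}$, which directly contradicts the Case~(i) hypothesis. Since no finite accumulation value is admissible, $\rho_* \to +\infty$.

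The only delicate point — and the step I would be most careful about — is the legitimacy of the term-by-term passage to the limit, since controlling $A\left(\frac{\rho_*}{1-a\rho_*}\right)^{\Gamma}$ presupposes that $\rho_*$ remains bounded. This is precisely the contradiction hypothesis, so the bookkeeping is self-consistent: the argument shows that \emph{every} finite accumulation point of $\rho_*$ is impossible, which is exactly the assertion that the limit is $+\infty$. I would phrase the conclusion so that it covers the boundary value $M = \rho_l$ as well, for which the limiting identity degenerates to $\upsilon_r = \upsilon_l$, again violating $\upsilon_r \leqslant \upsilon_l - \frac{B}{\rho_l^{\kappa}} < \upsilon_l$.
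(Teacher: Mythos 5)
Your proposal is correct and follows essentially the same route as the paper: pass to the limit in \eqref{3.3} under the assumption of a finite limiting value $M$, observe that only the Chaplygin terms survive so that $\upsilon_r = \upsilon_l - \frac{B}{\rho_l^{\kappa}} + \frac{B}{M^{\kappa}} > \upsilon_l - \frac{B}{\rho_l^{\kappa}}$, and contradict the Case~(i) hypothesis. Your write-up is somewhat more careful than the paper's --- you phrase the contradiction in terms of arbitrary finite accumulation points (subsequences), justify explicitly why the $A$-terms vanish, and cover the boundary case $M = \rho_l$, which the paper tacitly excludes by taking $M \in (\rho_l, +\infty)$ --- but these are refinements of the same argument rather than a different proof.
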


\begin{lemma}\label{l6}
	\begin{equation*}\lim_{a, A\to 0}{A\left( \frac{\varrho_{*}}{1-a\varrho_{*}}\right)^{\Gamma}} = \upsilon_{l} - \upsilon_{r} - \frac{B}{\varrho_{l}^{\kappa}}.
	\end{equation*}
\begin{proof}
	On employing  $\lim_{a, A\to 0} $ in \eqref{p15}, we get the desired result.
\end{proof}
\end{lemma}

	\begin{lemma}\label{l2}
		\begin{equation*}\lim_{a, A\to 0} \sigma_{1} = \lim_{a, A\to 0} \sigma_{2}= \upsilon_{r}.
		\end{equation*}
	\begin{proof}
		From $ \eqref{p13}_{1} $, we have 
		\begin{equation}
		\lim_{a, A\to 0} \sigma_{1} =\lim_{a, A\to 0} \left( \upsilon_{l}- \frac{\varrho_{*}(p_{*}-p_{l})}{\varrho_{*}-\varrho_{l}}\right)=  \upsilon_{l}- \left(\upsilon_{l}-\upsilon_{r}-\frac{B}{\varrho_{l}^{\kappa}}+\frac{B}{\varrho_{l}^{\kappa}} \right)=\upsilon_{r}.
		\end{equation} Hence, the proof is completed.
	\end{proof}	
	\end{lemma}
	
	\begin{lemma}\label{l3}
		\begin{equation}\label{p17}
		\lim_{a, A\to 0} \int_{\sigma_{1}}^{\sigma_{2}} \varrho_{*}d\zeta = \varrho_{l}(\upsilon_{l}-\upsilon_{r})\neq0.
		\end{equation}
		\begin{proof}
			In view of $ \eqref{p10}_{1}$ for $ S $ and $ J $, we get the following
			\begin{equation}\label{p18}
				\begin{cases}
					-\sigma_{1}(\varrho_{l}-\varrho_{*})+(\varrho_{l}\upsilon_{l}-\varrho_{*}\upsilon_{*})=0,
					\\ -\sigma_{2}(\varrho_{*}-\varrho_{r})+(\varrho_{*}\upsilon_{*}-\varrho_{r}\upsilon_{r})=0.
				\end{cases}
			\end{equation}
			Furthermore, addition of  $ \eqref{p18}_{1} $ and $ \eqref{p18}_{2} $ yields 
			\begin{equation}\label{p19}
				\varrho_{*}(\sigma_{2}-\sigma_{1})=\sigma_{2}\varrho_{r}-\sigma_{1}\varrho_{l}+\varrho_{l}\upsilon_{l}-\varrho_{r}\upsilon_{r},
			\end{equation}
			which, in turn,  implies the required result.
		\end{proof}
	\end{lemma}
\begin{remark}
	Lemma \ref{l1} demonstrates that  the intermediate density $\varrho_{*}$ approaches to infinity as $a, A \to 0$ for $\upsilon_{r} \leqslant \upsilon_{l}-\frac{B}{\varrho_{l}^{\kappa}}.$ Put differently, we can interpret this as $\varrho_{*}$ transitioning into a Dirac $\delta$-function as  $a, A \to 0$. Also, lemma \ref{l2} implies that   the shock curve coincide with contact discontinuity  curve as  $a, A \to 0$ for $\upsilon_{r} \leqslant \upsilon_{l}-\frac{B}{\varrho_{l}^{\kappa}}.$
\end{remark}

	\begin{theorem}
		Let $\upsilon_{r} \leqslant \upsilon_{l}-\frac{B}{\varrho_{l}^{\kappa}},$  $ (\varrho_{r},\upsilon_{r})$ $\in$  $ I(a)(\varrho_{l}, \upsilon_{l}) $ and for all fixed $a, A>0$, $ (\varrho_{a}, u_{a}) $ be the $ S+J $ Riemann solution to the system \eqref{p1}-\eqref{p2} and \eqref{p4}. Then 
		\begin{equation}\label{p20}
			\lim_{a, A\to 0}\upsilon_{a}(x,t) = 	\begin{cases}
				\upsilon_{l} , \hspace{1cm} x < \upsilon_{r}t,
				\\ \upsilon_{r}, \hspace{1cm} x = \upsilon_{r}t,
				\\\upsilon_{r}, \hspace{0.95cm} x > \upsilon_{r}t,
			\end{cases}
		\end{equation}and $ \varrho_{a} $ converges in  distributional sense. The limit function is the sum of a Dirac-delta function and a step function supported on the curve $ x = \upsilon_{r}t $ with weight $ \varrho_{l}(\upsilon_{l}-\upsilon_{r})t $, as $ a, A\to 0.$ 
		\begin{proof}
			(i) For any $a,A>0,$ the Riemann solution $ S+J $ to the system \eqref{p1}-\eqref{p2} and \eqref{p4} can be expressed as 
			\begin{equation}\label{p21}
				(\varrho_{a}, \upsilon_{a})(\zeta:=x/t) = 	\begin{cases}
					(\varrho_{l}, \upsilon_{l}) , \hspace{2cm} \zeta < \sigma_{1},
					\\ (\varrho_{*}(\zeta), u_{*}(\zeta)), \hspace{0.9cm} \sigma_{1} < \zeta < \sigma_{2},
					\\(\varrho_{r}, \upsilon_{r}), \hspace{1.9cm} \zeta > \sigma_{2}.
				\end{cases}
			\end{equation}
			Now, from \eqref{p6}, we have the following weak formulation
			\begin{equation}\label{p22}
				- \int_{-\infty}^{+\infty} \varrho_{a}(\upsilon_{a} - \zeta ) \phi' d \zeta + \int_{-\infty}^{+\infty} \varrho_{a} \phi d \zeta =0,
			\end{equation}
			for any $ \phi\in C_{0}^{1}(-\infty, +\infty).$ The limit \eqref{p20} can be directly obtained from \eqref{p21}.
			\\ (ii) Consider
			\begin{equation}\label{p23}
				\int_{-\infty}^{+ \infty}\varrho_{a}(\upsilon_{a} -\zeta) \phi' d \zeta = \left( \int_{-\infty}^{\sigma_{1}} + \int_{\sigma_{1}}^{\sigma_{2}} + \int_{\sigma_{2}}^{+ \infty}\right) \varrho_{a}(\upsilon_{a} -\zeta) \phi' d \zeta.
			\end{equation}
			Also, we have
			\begin{eqnarray}\label{p24}
				&&\lim_{a, A\to 0}\left( \int_{-\infty}^{\sigma_{1}}\varrho_{a}(\upsilon_{a} -\zeta) \phi' d \zeta+\int_{\sigma_{2}}^{+ \infty}\varrho_{a}(\upsilon_{a} -\zeta) \phi' d \zeta\right)  \nonumber \\ && = \lim_{a, A\to 0}\left( \int_{-\infty}^{\sigma_{1}}\varrho_{l}(\upsilon_{l} -\zeta) \phi' d \zeta+\int_{\sigma_{2}}^{+ \infty}\varrho_{r}(\upsilon_{r} -\zeta) \phi' d \zeta\right)  \nonumber \\ && =\varrho_{l}(\upsilon_{l}-\upsilon_{r})\phi(\upsilon_{r}) +\int_{-\infty}^{+\infty}H(\zeta-\upsilon_{r})\phi d\zeta,
			\end{eqnarray}
			where
			\begin{equation}
				H(x)=\begin{cases}
					\varrho_{l}, \qquad x<0,
					\\ \varrho_{r}, \qquad x>0.
				\end{cases}
			\end{equation} A simple computation implies that
		\begin{equation}\label{p25}
			\lim_{a, A\to 0}\int_{\sigma_{1}}^{\sigma_{2}}\varrho_{a}(\upsilon_{a} -\zeta) \phi' d \zeta=0.
		\end{equation}
			Using  equations \eqref{p24},  \eqref{p25} in  \eqref{p22}, we have
			\begin{equation}\label{p26}
				\lim_{a, A\to 0}\int_{-\infty}^{+\infty} \varrho_{a} \phi d \zeta=\varrho_{l}(\upsilon_{l}-\upsilon_{r})\phi(\upsilon_{r}) +\int_{-\infty}^{+\infty}H(\zeta-\upsilon_{r})\phi d\zeta.
			\end{equation}(iii) For any $ \psi \in  C_{0}^{\infty}(\mathbb{R}\times \mathbb{R}^{+}),$ in view of the limit of $\varrho_{a}$ depending on $t,$ and  \eqref{p26}, we have
		\begin{eqnarray}\label{p27}
		&& \lim_{a, A\to 0}	\int_{0}^{+\infty}\int_{-\infty}^{+\infty} \varrho_{a}(x/t)\psi(x, t) dx dt \nonumber = \lim_{a, A\to 0}	\int_{0}^{+\infty} t\left(  \int_{-\infty}^{+\infty} \varrho_{a}(\zeta)\psi(\zeta t, t) d\zeta \right)  dt \nonumber \\ &&= \int_{0}^{+\infty}t \left( \varrho_{l}(\upsilon_{l}-\upsilon_{r})\psi(\upsilon_{r}t, t)  + \int_{-\infty}^{+\infty} H(\zeta- \upsilon_{r}) \psi(\zeta t, t) d\zeta  \right) dt \nonumber \\ && = \int_{0}^{+\infty} \varrho_{l}(\upsilon_{l}-\upsilon_{r})t\psi(\upsilon_{r}t, t)dt+ \int_{0}^{+\infty}\int_{-\infty}^{+\infty} H(x- \upsilon_{r} t) \psi(x, t) dx dt
	\end{eqnarray}
in which
\begin{equation*}
\int_{0}^{+\infty} \varrho_{l}(\upsilon_{l}-\upsilon_{r})t\psi(\upsilon_{r}t, t)dt=\langle w(\cdot)\delta_{C}, \psi(\cdot,\cdot) \rangle
\end{equation*}
with \begin{equation}
w(t)= \varrho_{l}(\upsilon_{l}-\upsilon_{r})t.
\end{equation}
\end{proof}
\end{theorem}
Hence, we conclude that the Riemann solution $S+J$ converges to $\delta$-shock solution whenever $\upsilon_{r}\leqslant \upsilon_{l}-\frac{B}{\varrho_{l}^{\kappa}} $ and $a, A\to 0.$ 
	\\ \textbf{Case (ii).} Here, we analyze behavior of the Riemann solution to \eqref{p1}-\eqref{p2} and \eqref{p4}  whenever $ \upsilon_{l}-\frac{B}{\varrho_{l}^{\kappa}} <\upsilon_{r}<\upsilon_{l} $. 
	For any $ a>0 $ and $ A>0,$ the intermediate state $ (\varrho_{*}, \upsilon_{*}) $ is connected with $ (\varrho_{l}, \upsilon_{l}) $ by $ S $  and $ (\varrho_{r}, \upsilon_{r}) $ by $ J $ with speeds  $ \sigma_{1} $  and $ \sigma_{2},$ respectively.  Then, we have
	\begin{equation}\label{p28}
		S : \begin{cases} \sigma_{1}=\upsilon_{l}-\frac{\varrho_{*}(p_{*}-p_{l})}{\varrho_{*}-\varrho_{l}}=\upsilon_{*}-\frac{\varrho_{l}(p_{*}-p_{l})}{\varrho_{*}-\varrho_{l}},
			\\ \upsilon_{*}+p_{*}=\upsilon_{l}+p_{l},\qquad \varrho_{*}>\varrho_{l},
		\end{cases}
	\end{equation}
	and 
	\begin{equation}\label{p29}
		J : \, \sigma_{2}=\upsilon_{*}=\upsilon_{r},
			\qquad \upsilon_{*}=\upsilon_{r}.
	\end{equation} 
	From $ \eqref{p28}_{2} $ and $ \eqref{p29} $, $\varrho_{*}$ satisfies
	\begin{equation}\label{p30}
		\upsilon_{r}=-p_{*}+\upsilon_{l}+p_{l}.
	\end{equation}
On employing  $\lim_{a, A\to 0} $ in \eqref{p30}, with the assumption that $\lim_{a, A\to 0} \varrho_{*} = \infty, $ one can obtain $\upsilon_{r}\leqslant \upsilon_{l}-\frac{B}{\varrho_{l}^{\kappa}}; $ which gives a contradiction with $ \upsilon_{l}-\frac{B}{\varrho_{l}^{\kappa}} <\upsilon_{r}. $ Hence, $\lim_{a, A\to 0} \varrho_{*} = \text{finite}. $

	\subsection{Limit of Riemann solutions when ($\varrho_{r}$, $\upsilon_{r}$) lies in region-II}
	We study the limit $ a, A\to 0 $ of the Riemann solution to \eqref{p1}-\eqref{p2} with \eqref{p4} in the case  $ \upsilon_{l}\leq\upsilon_{r} $.  For any $ a>0 $ and $ A>0 $, the intermediate state $ (\varrho_{*}, \upsilon_{*}) $ is connected with $ (\varrho_{l}, \upsilon_{l}) $ by $ R $ and $ (\varrho_{r}, \upsilon_{r}) $ by $ J $, respectively. Then, we have
	\begin{equation}\label{p31}
		R : \begin{cases} \zeta=\lambda_{1}=\upsilon  - \varrho p^{\prime},
			\\ \upsilon+p=\upsilon_{l}+p_{l}, \qquad \varrho_{*}<\varrho < \varrho_{l},\end{cases}
	\end{equation}
	and 
	\begin{equation}\label{p32}
		J : \, \sigma_{2}=\upsilon_{*}=\upsilon_{r},
		\qquad \upsilon_{*}=\upsilon_{r}.
	\end{equation} 
	From $ \eqref{p31}_{2} $ and $ \eqref{p32} $, $\varrho_{*}$ satisfies
	\begin{equation}\label{p33}
		\upsilon_{r}=-p_{*}+\upsilon_{l}+p_{l}.		
	\end{equation}
	Taking  $\lim_{a, A\to 0} $ in \eqref{p33}, with the consideration that $\lim_{a, A\to 0} \varrho_{*} = 0, $ one can obtain $ \upsilon_{r} = +\infty,$ which is absurd. Hence, there is no vacuum in the Riemann solution of the system \eqref{p1}-\eqref{p2} and \eqref{p4}.

 \section{Numerical Simulations}
 Here, we present  numerical simulations to verify our theoretical analysis. The system \eqref{p1}-\eqref{p2} can be written as
 \begin{equation}\label{p101}
 	U_{t}+BU_{x}=0,
 \end{equation} where
\begin{equation}\label{p102}
	U=\begin{bmatrix}
		\varrho \\ \upsilon
	\end{bmatrix}, \qquad B=\begin{bmatrix}
\upsilon & \varrho \\ 0 & \upsilon-\varrho p^{\prime}
\end{bmatrix}.
\end{equation} Further, we have 
 \begin{equation}\label{p103}
 	B=R\Lambda L,
 \end{equation} with 
\begin{equation}\label{p104}
	R=\begin{bmatrix}
		\frac{1}{ p^{\prime}} & -\frac{1}{ p^{\prime}}\\ 0 & 1
	\end{bmatrix}, \qquad \Lambda=\begin{bmatrix}
	\upsilon & 0 \\ 0 & \upsilon-\varrho p^{\prime}
\end{bmatrix}, \qquad L=\begin{bmatrix}
p^{\prime} & 1\\ 0 & 1
\end{bmatrix}.
\end{equation}

Now, we employ the first order upwind scheme (see, \cite{Lu1996, chakravarthy1980, liu2025}) based on the split coefficient matrix method  which is represented as
\begin{equation}\label{p105}
	U_{j}^{n+1}=U_{j}^{n}-\frac{\Delta t}{\Delta x} \left\lbrace B_{j}^{-}(U_{j+1}^{n}-U_{j}^{n})+B_{j}^{+}(U_{j}^{n}-U_{j-1}^{n})\right\rbrace, 
\end{equation} 
 where 
 \begin{equation}\label{p106}
 	B_{j}^{-}=\frac{B_{j}^{n}-|B_{j}^{n}|}{2}, \qquad B_{j}^{+}=\frac{B_{j}^{n}+|B_{j}^{n}|}{2}, \qquad |B_{j}^{n}|=R_{j}^{n}|\Lambda_{j}^{n}|L_{j}^{n}.
 \end{equation}

\noindent \textbf{Case (i).} To verify the formation of delta shock wave in the limiting case, first we consider the following initial data 
 \begin{equation}\label{p107}
 	(\varrho_{l, r}, \upsilon_{l, r})= \begin{cases}
 		(1, 5), \qquad x<0,
 		\\ (1, 2), \qquad x>0,
 	\end{cases}
 \end{equation} and $ \kappa =0.25, \Gamma=2, B=1.$  The numerical results for different pairs of values of $a$ and $A$ are shown in figures (see, Figures \ref{p111}-\ref{p112}). Additionally, Figures \ref{p113}-\ref{p114} illustrate how density and velocity behave as $a$ and $A$ decrease for $\kappa=0.75,$ $\Gamma=3$ and the initial datum satisfying \eqref{p107}.
\begin{figure}[h!]
	\begin{minipage}{0.24\textwidth}
		\centering
		\includegraphics[width=\linewidth]{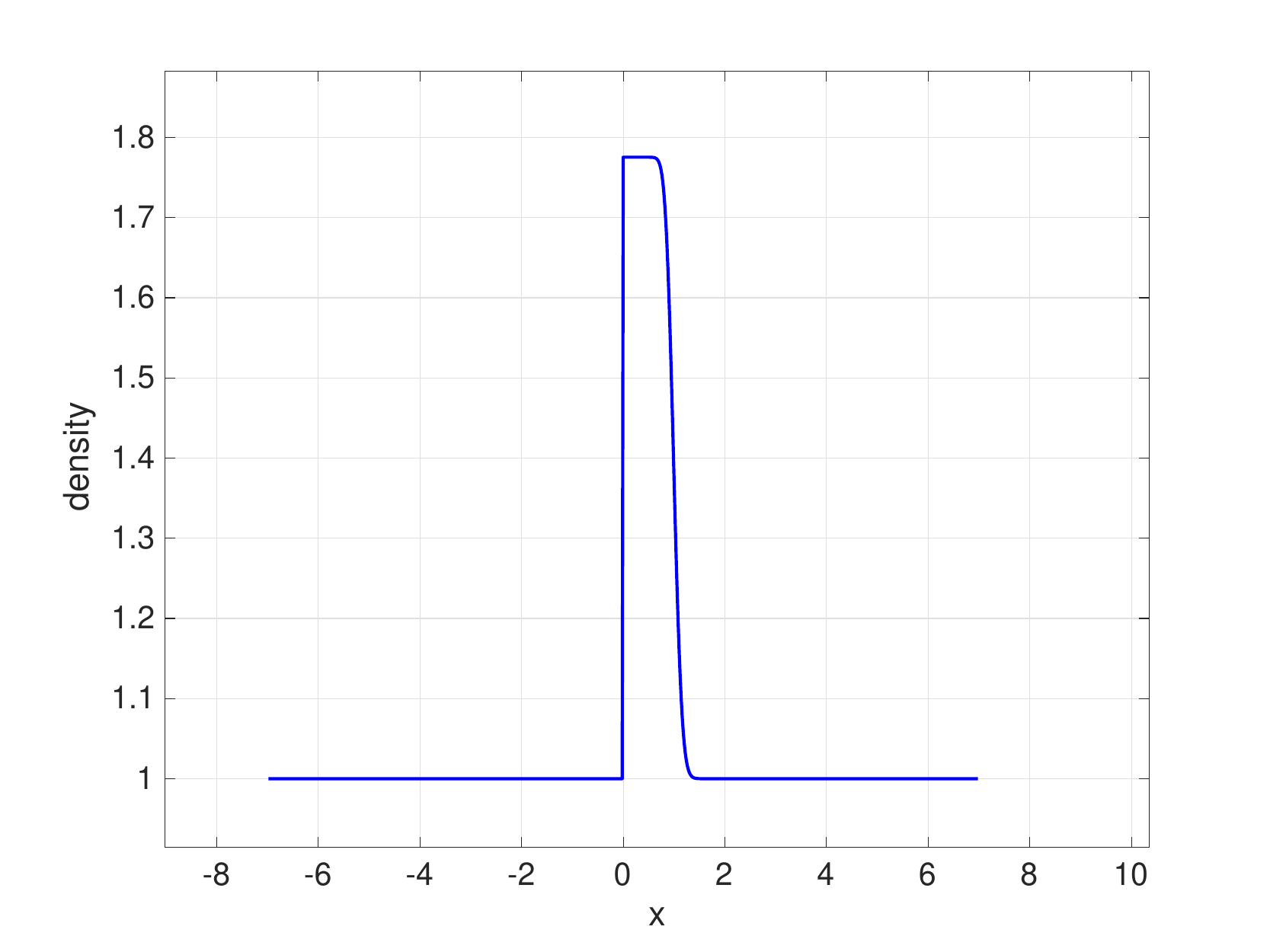}
	\end{minipage}\hfill
	\begin{minipage}{0.24\textwidth}
		\centering
		\includegraphics[width=\linewidth]{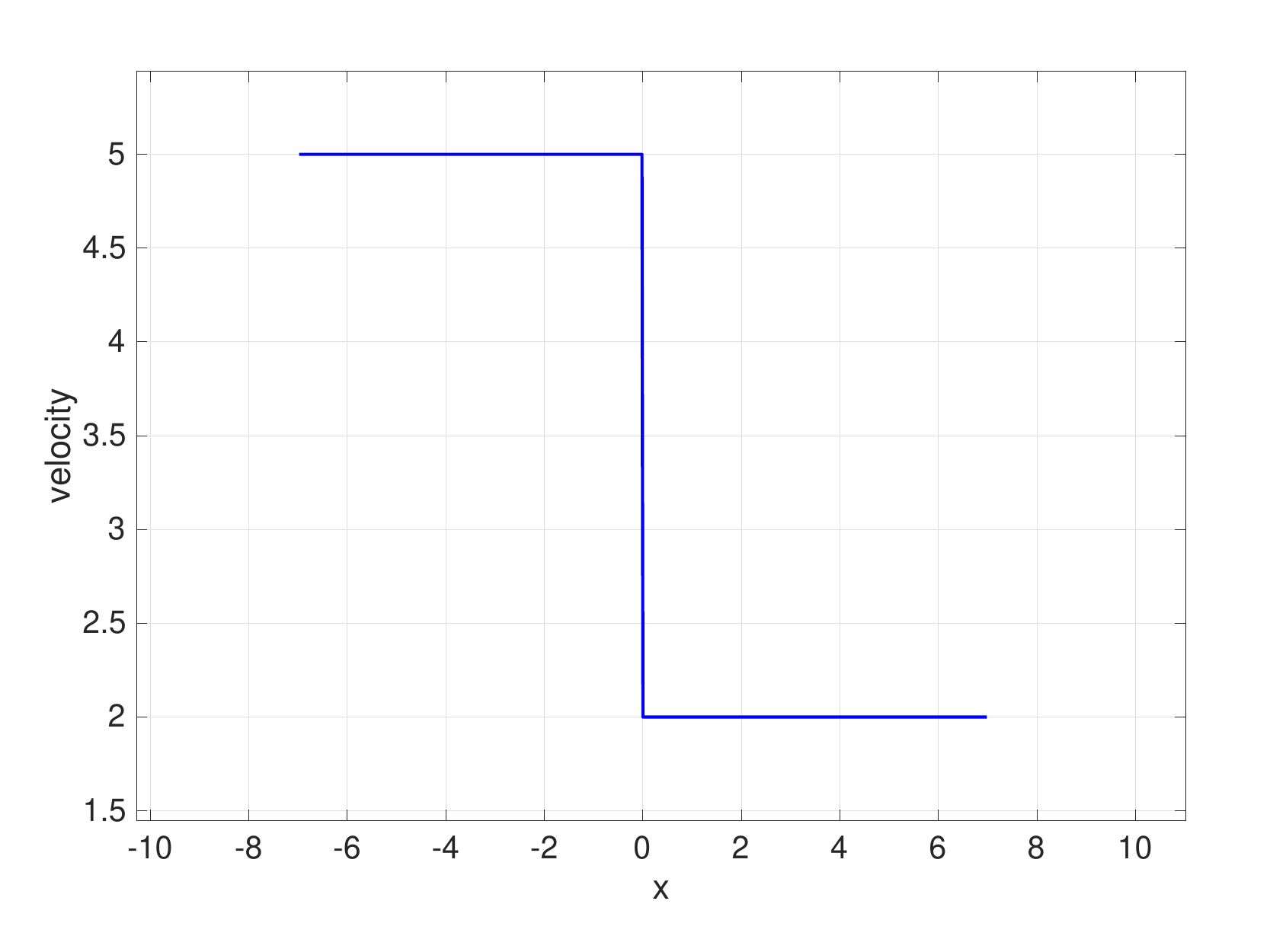}
	\end{minipage}\hfill
	\begin{minipage}{0.24\textwidth}
		\centering
		\includegraphics[width=\linewidth]{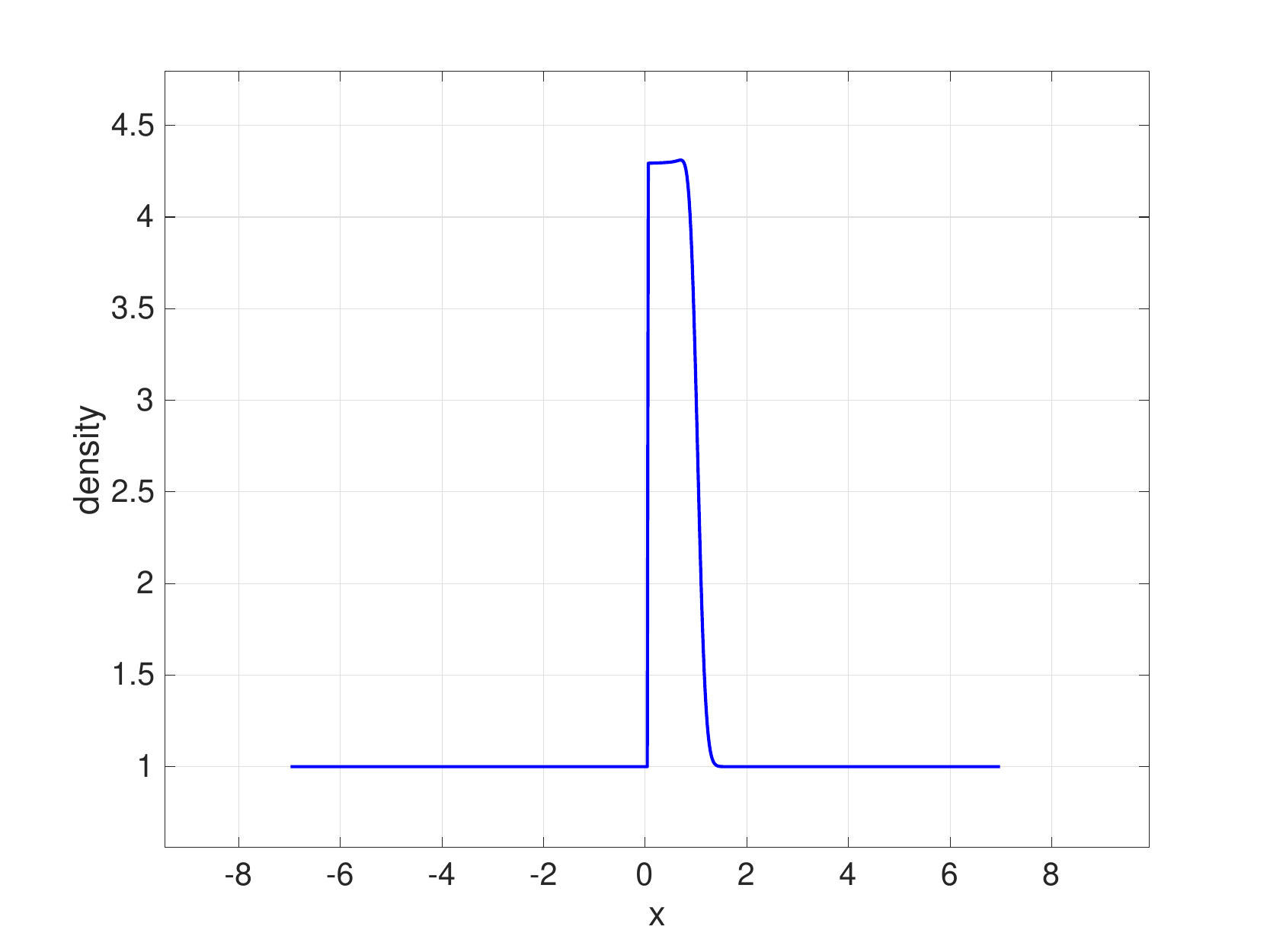}
	\end{minipage}\hfill
	\begin{minipage}{0.24\textwidth}
		\centering
		\includegraphics[width=\linewidth]{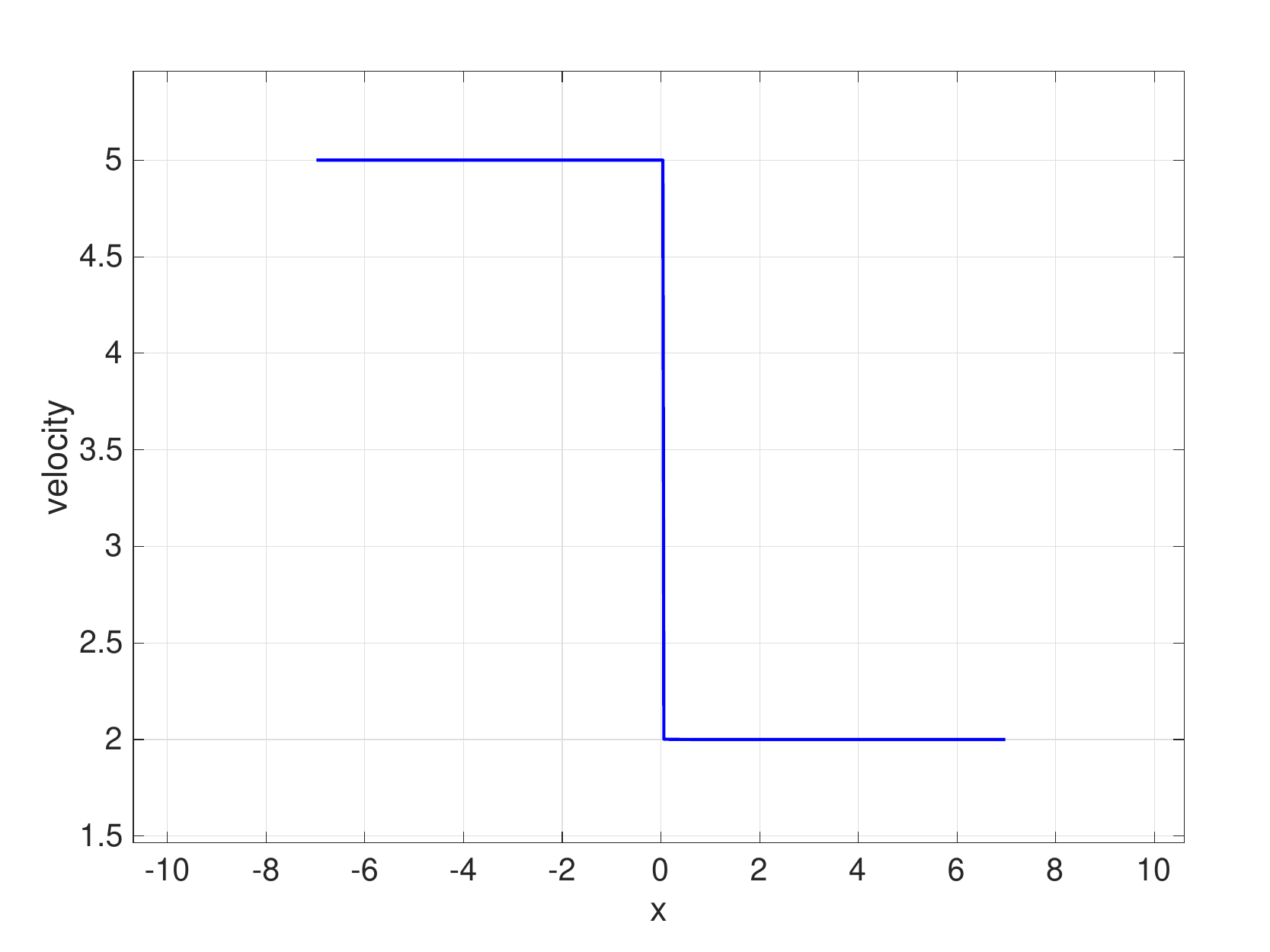}
	\end{minipage}
	\caption{Density and velocity for A=1, a=0.01 and A=0.1, a=0.001, respectively.}
	\label{p111}
\end{figure}
\begin{figure}[h!]
	\begin{minipage}{0.24\textwidth}
		\centering
		\includegraphics[width=\linewidth]{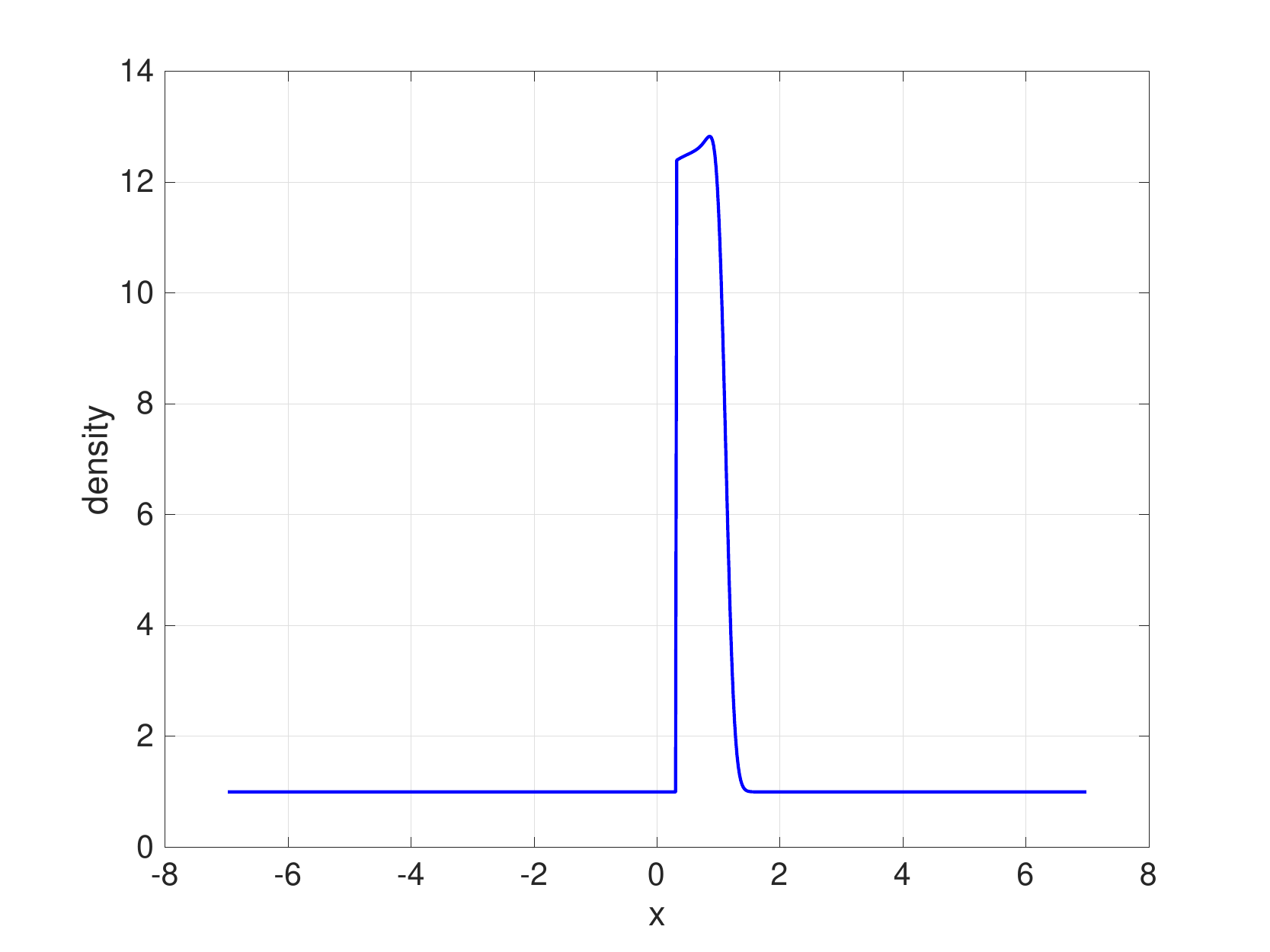}
	\end{minipage}\hfill
	\begin{minipage}{0.24\textwidth}
		\centering
		\includegraphics[width=\linewidth]{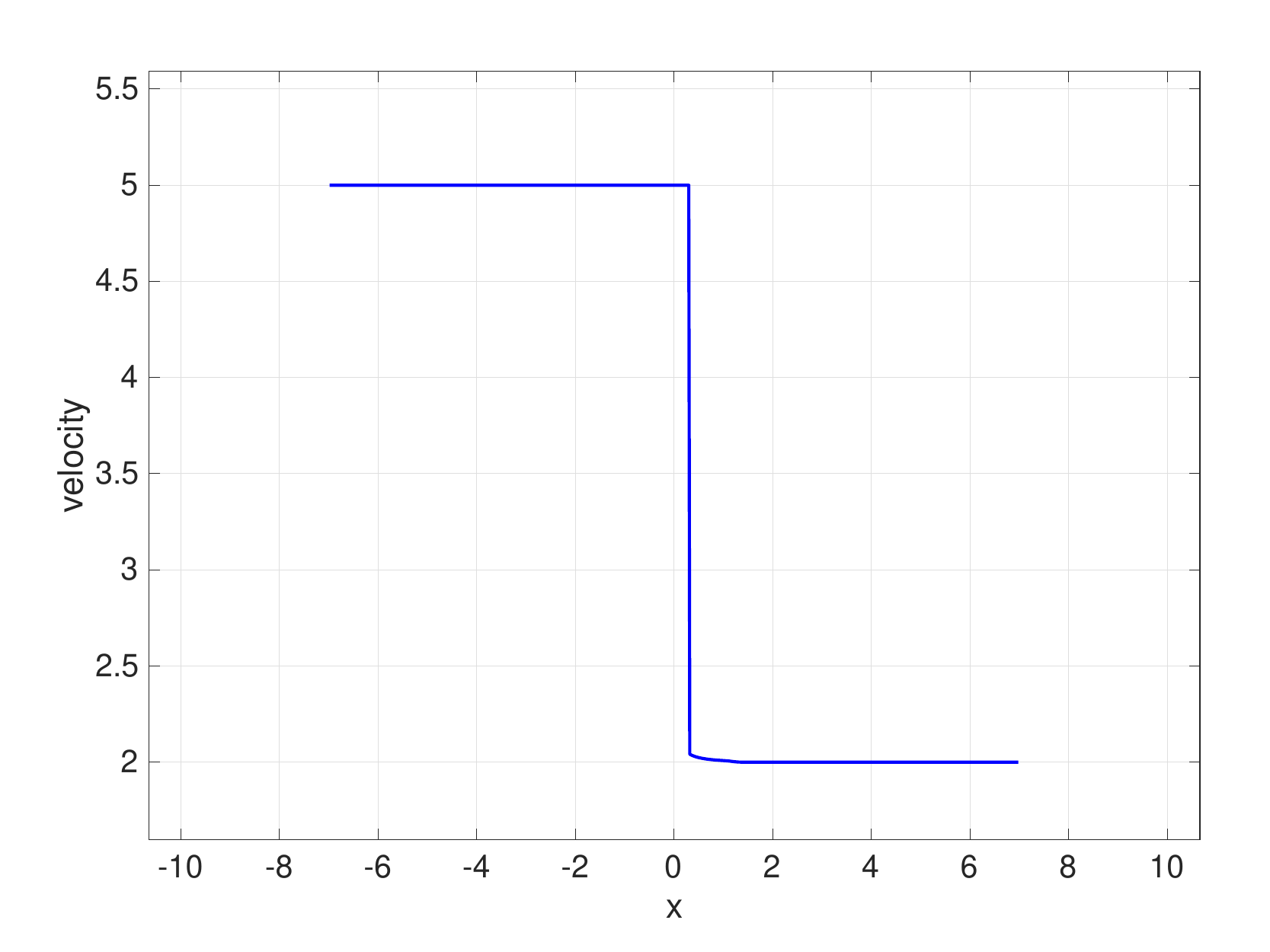}
	\end{minipage}\hfill
	\begin{minipage}{0.24\textwidth}
		\centering
		\includegraphics[width=\linewidth]{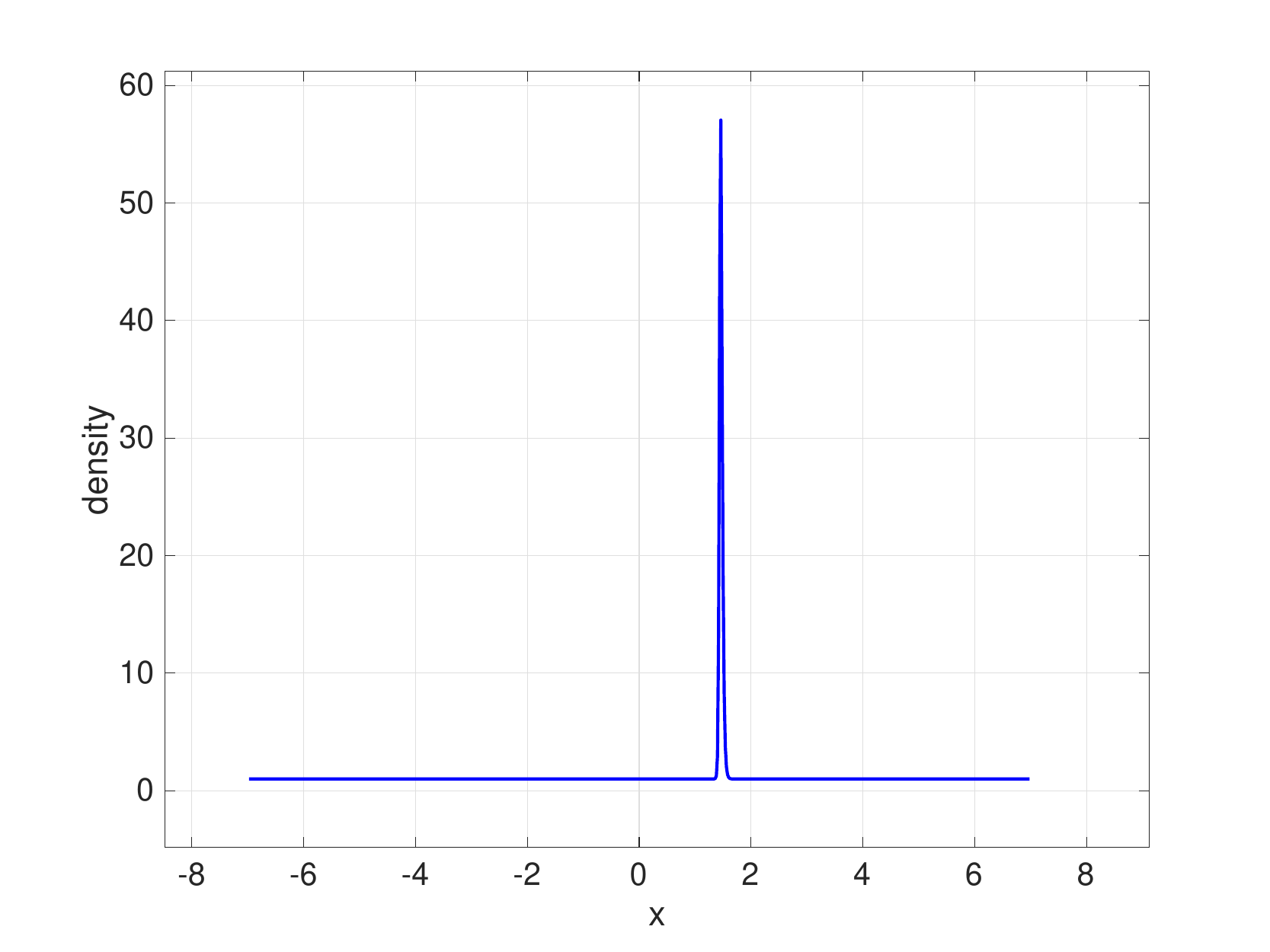}
	\end{minipage}\hfill
	\begin{minipage}{0.24\textwidth}
		\centering
		\includegraphics[width=\linewidth]{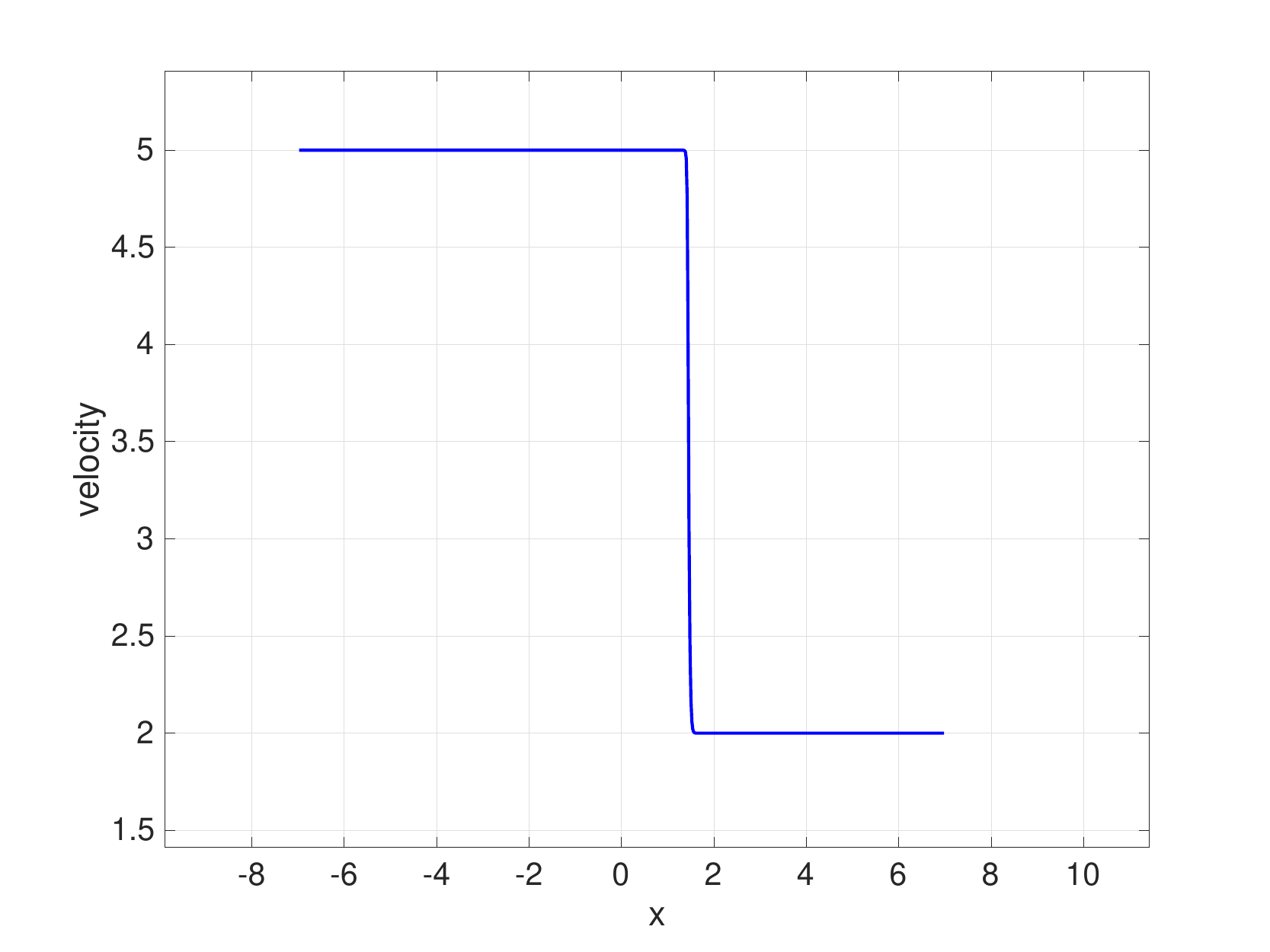}
	\end{minipage}
	\caption{Density and velocity for A=0.01, a=0.0001 and A=0.0001, a=0.000001, respectively.}
	\label{p112}
\end{figure}
\begin{figure}[h!]
	\begin{minipage}{0.24\textwidth}
		\centering
		\includegraphics[width=\linewidth]{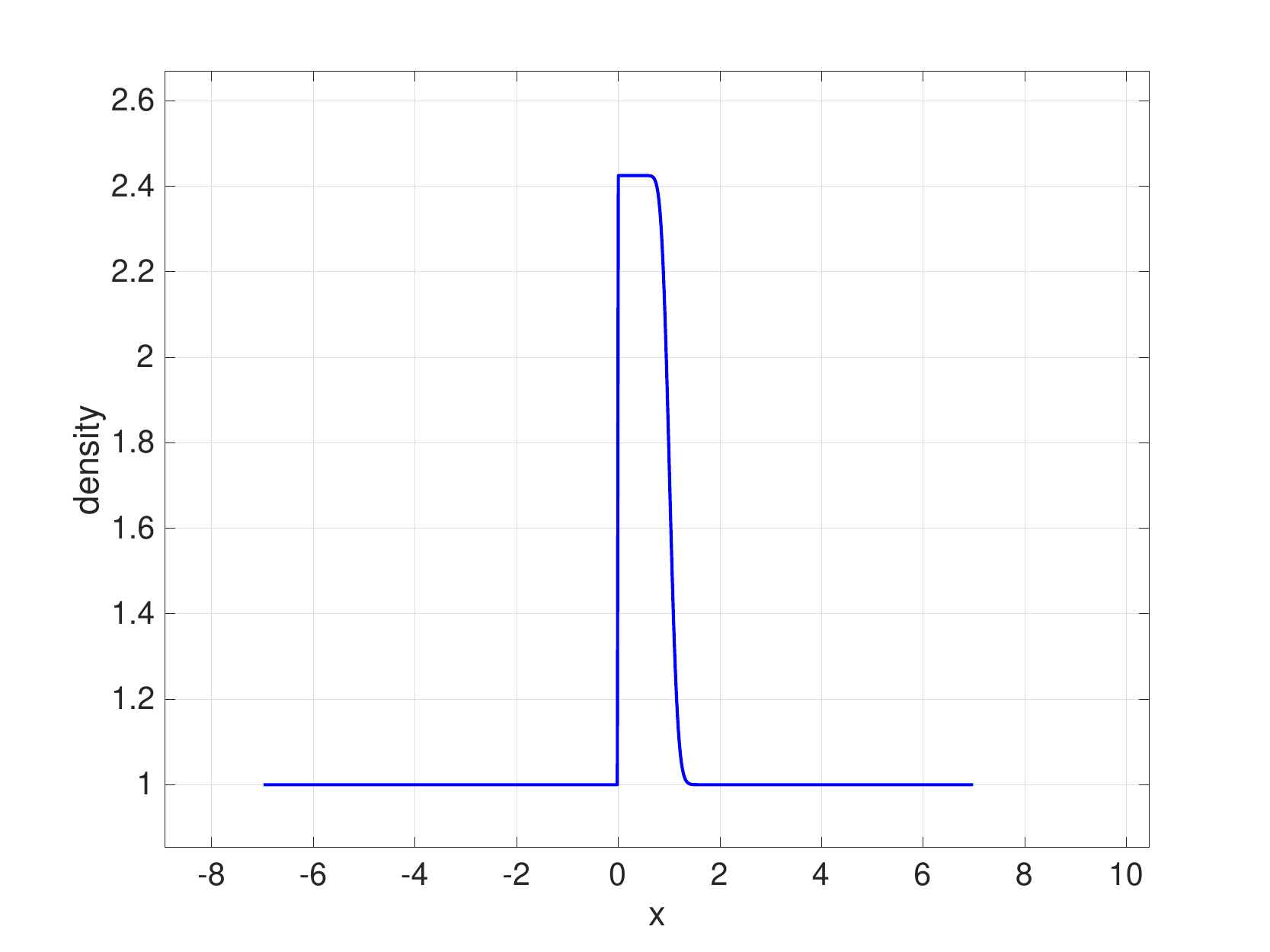}
	\end{minipage}\hfill
	\begin{minipage}{0.24\textwidth}
		\centering
		\includegraphics[width=\linewidth]{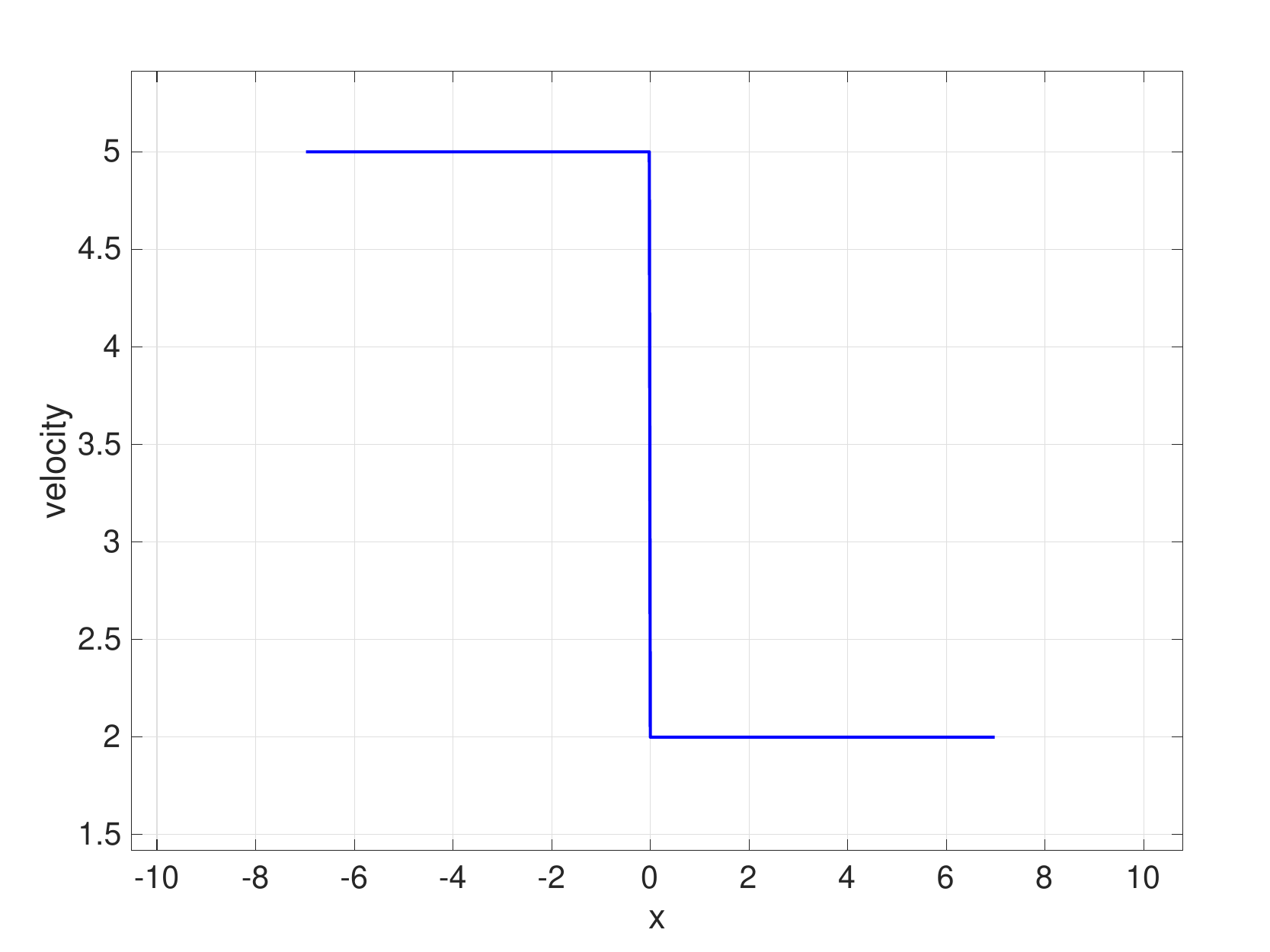}
	\end{minipage}\hfill
	\begin{minipage}{0.24\textwidth}
		\centering
		\includegraphics[width=\linewidth]{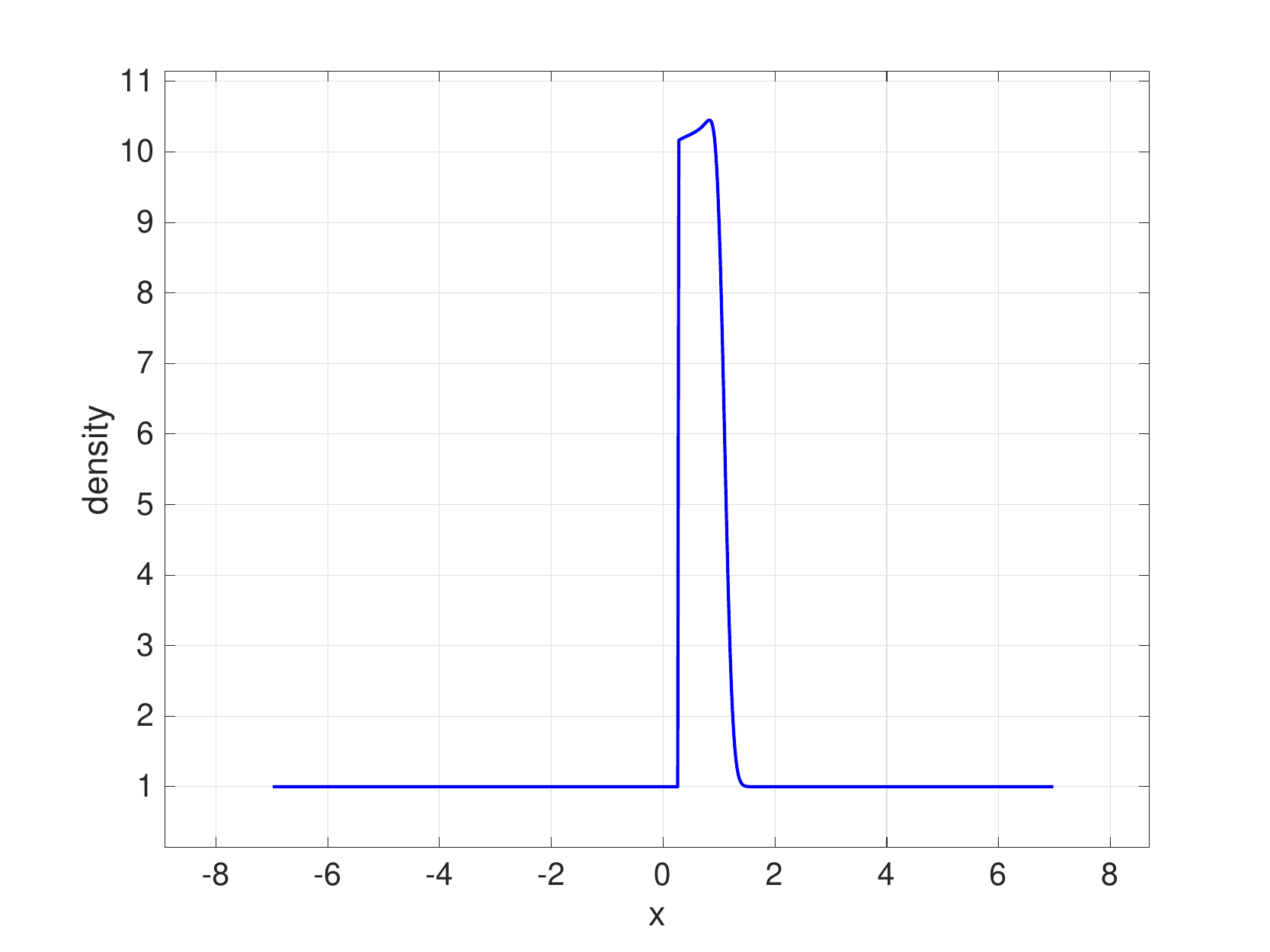}
	\end{minipage}\hfill
	\begin{minipage}{0.24\textwidth}
		\centering
		\includegraphics[width=\linewidth]{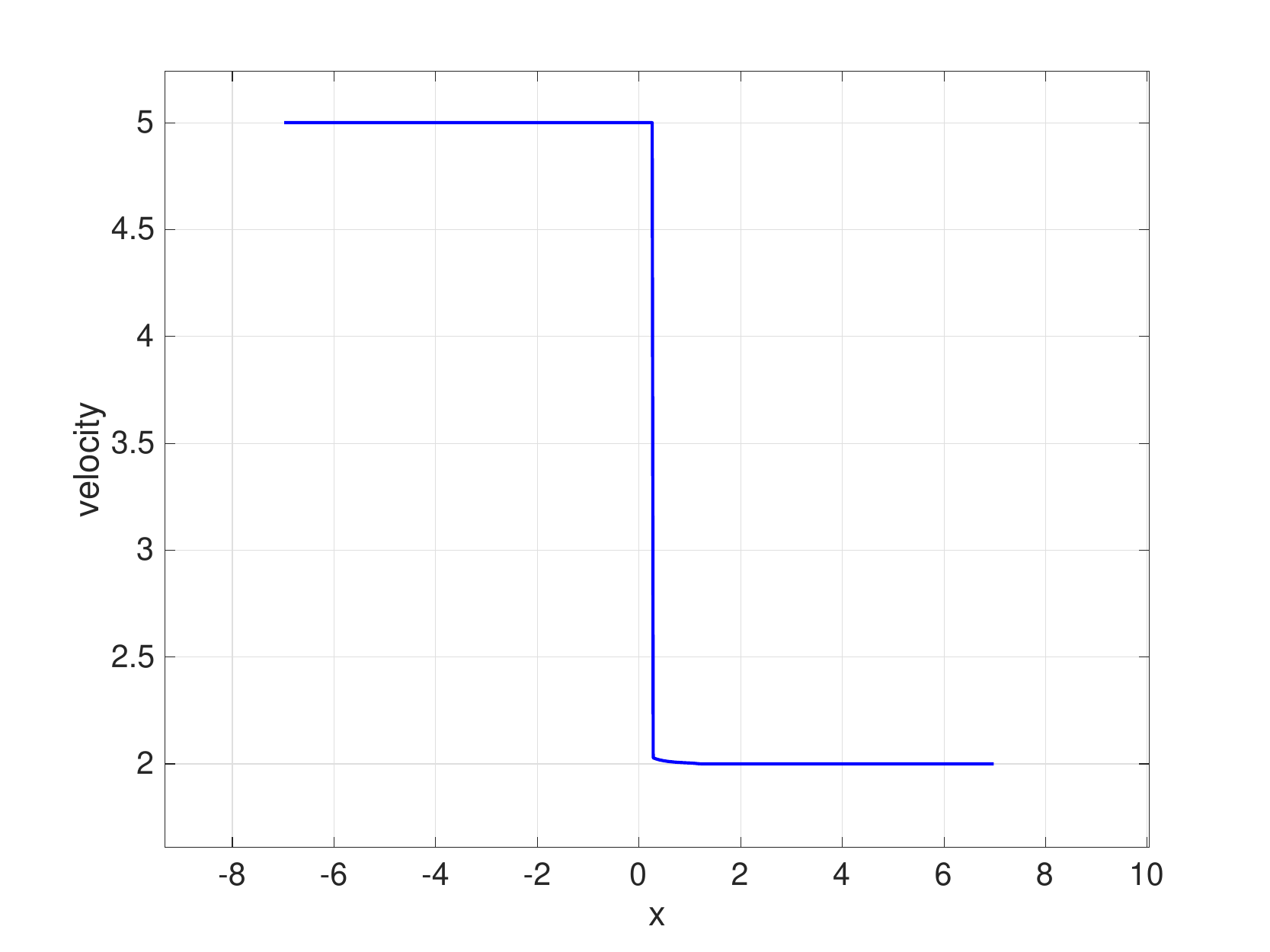}
	\end{minipage}
	\caption{Density and velocity for A=0.1, a=0.01 and A=0.001, a=0.0001, respectively.}
	\label{p113}
\end{figure}

\begin{figure}[h!]
	\begin{minipage}{0.24\textwidth}
		\centering
		\includegraphics[width=\linewidth]{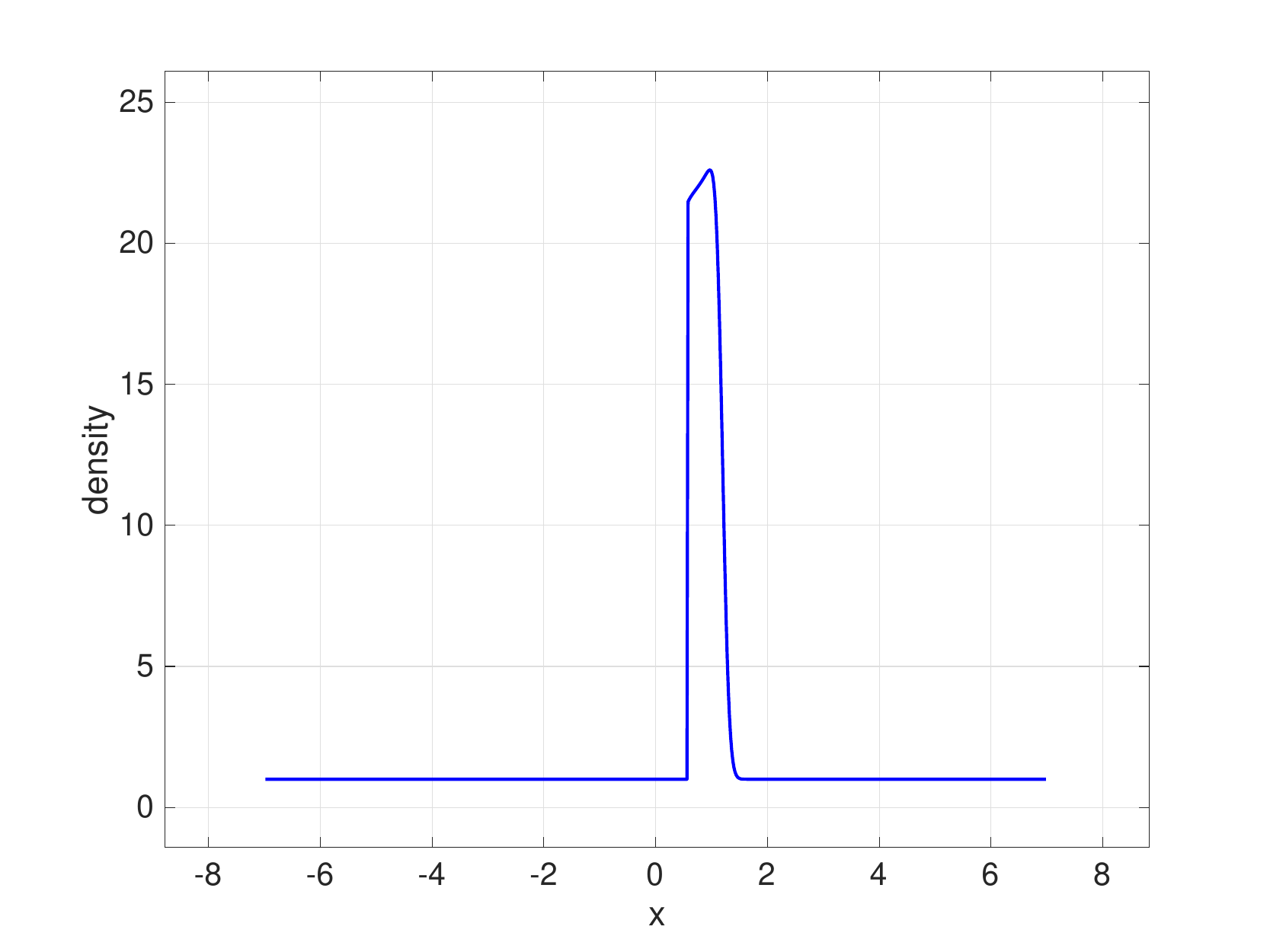}
	\end{minipage}\hfill
	\begin{minipage}{0.24\textwidth}
		\centering
		\includegraphics[width=\linewidth]{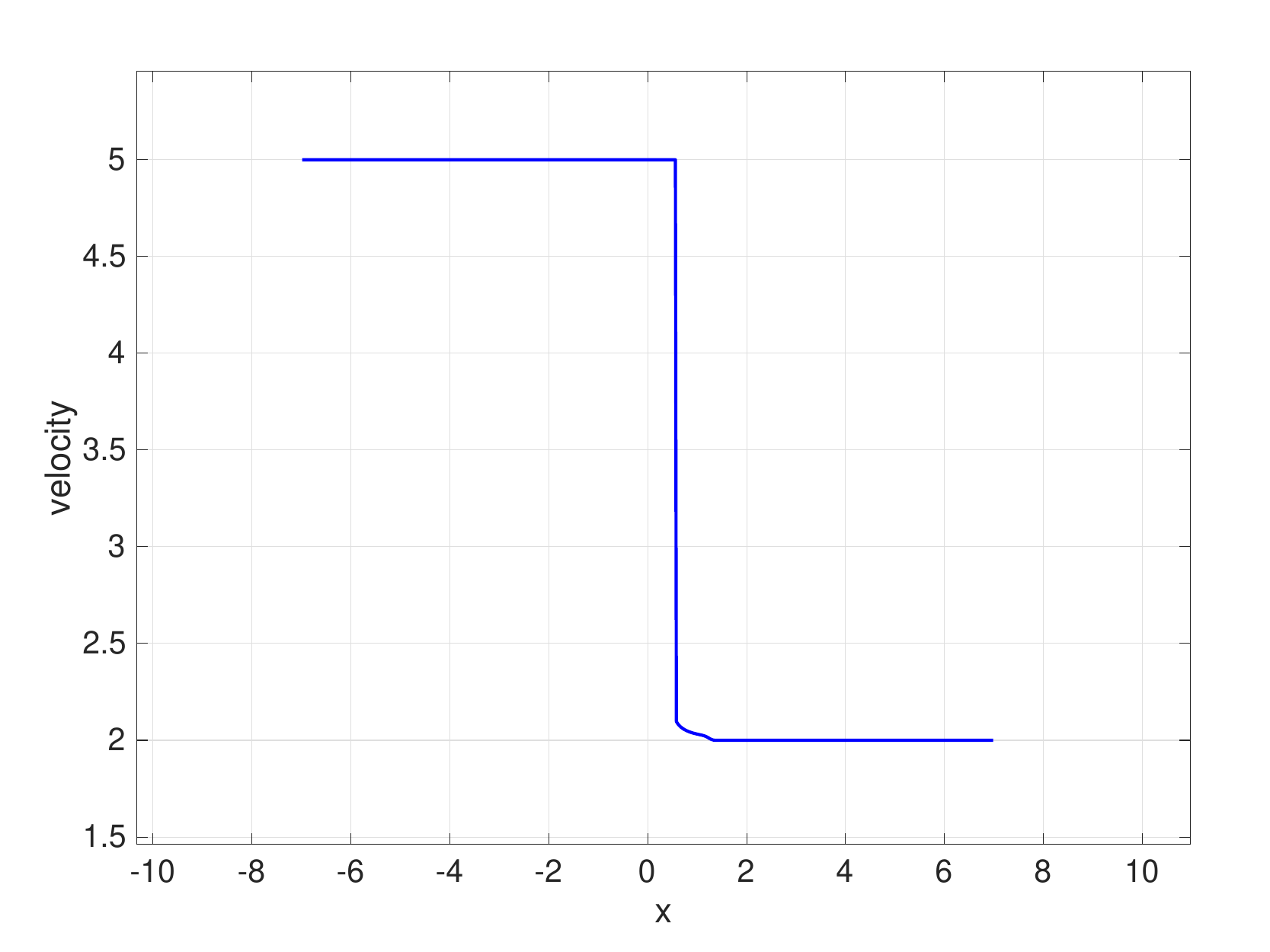}
	\end{minipage}\hfill
	\begin{minipage}{0.24\textwidth}
		\centering
		\includegraphics[width=\linewidth]{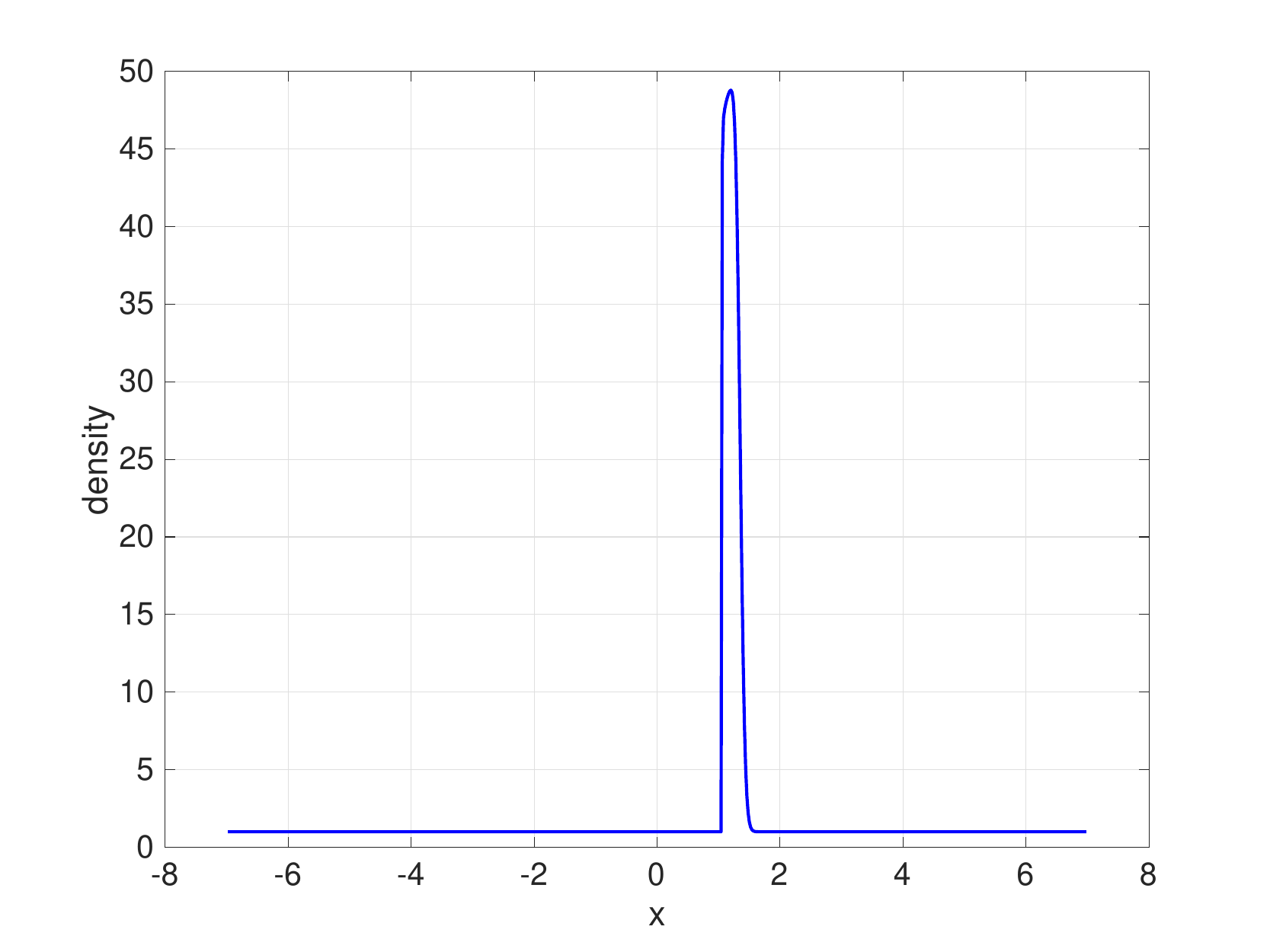}
	\end{minipage}\hfill
	\begin{minipage}{0.24\textwidth}
		\centering
		\includegraphics[width=\linewidth]{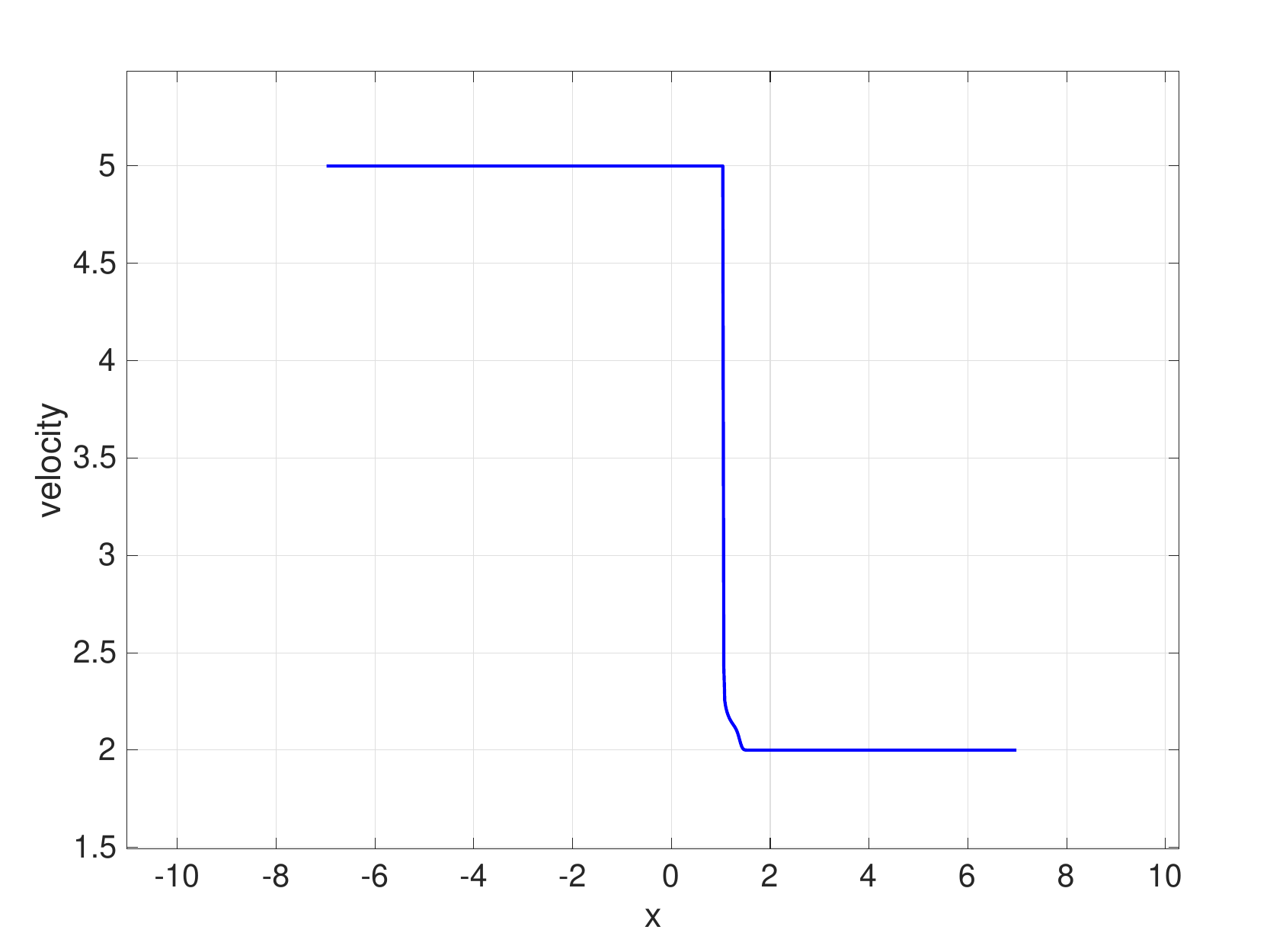}
	\end{minipage}
	\caption{Density and velocity for A=0.0001, a=0.00001 and A=0.00001, a=0.000001, respectively.}
	\label{p114}
\end{figure}

Thus, the numerical computations show that the intermediate density increases dramatically whenever $a$ and $A$ decrease. Hence, the numerical simulations are consistent with our theoretical analysis. 

 \noindent\textbf{Case (ii).} Corresponding to the case  
$\upsilon_{l}-\frac{B}{\varrho_{l}^{\kappa}} < \upsilon_{r}< \upsilon_{l},$ we assume the following initial data
\begin{equation}\label{p108}
	(\varrho_{l, r}, \upsilon_{l, r})= \begin{cases}
		(2, 5), \qquad x<0,
		\\ (1, 4.5), \qquad x>0,
	\end{cases}
\end{equation} with $B=1$ and observe the numerical results first for $\kappa=0.5,$ and $\Gamma=2$ (see, Figures \ref{p115}-\ref{p116}) and secondly for $\kappa=0.25,$ and $\Gamma=1$ (see, Figures \ref{p117}-\ref{p118}). 
\begin{figure}[h!]
	\begin{minipage}{0.24\textwidth}
		\centering
		\includegraphics[width=\linewidth]{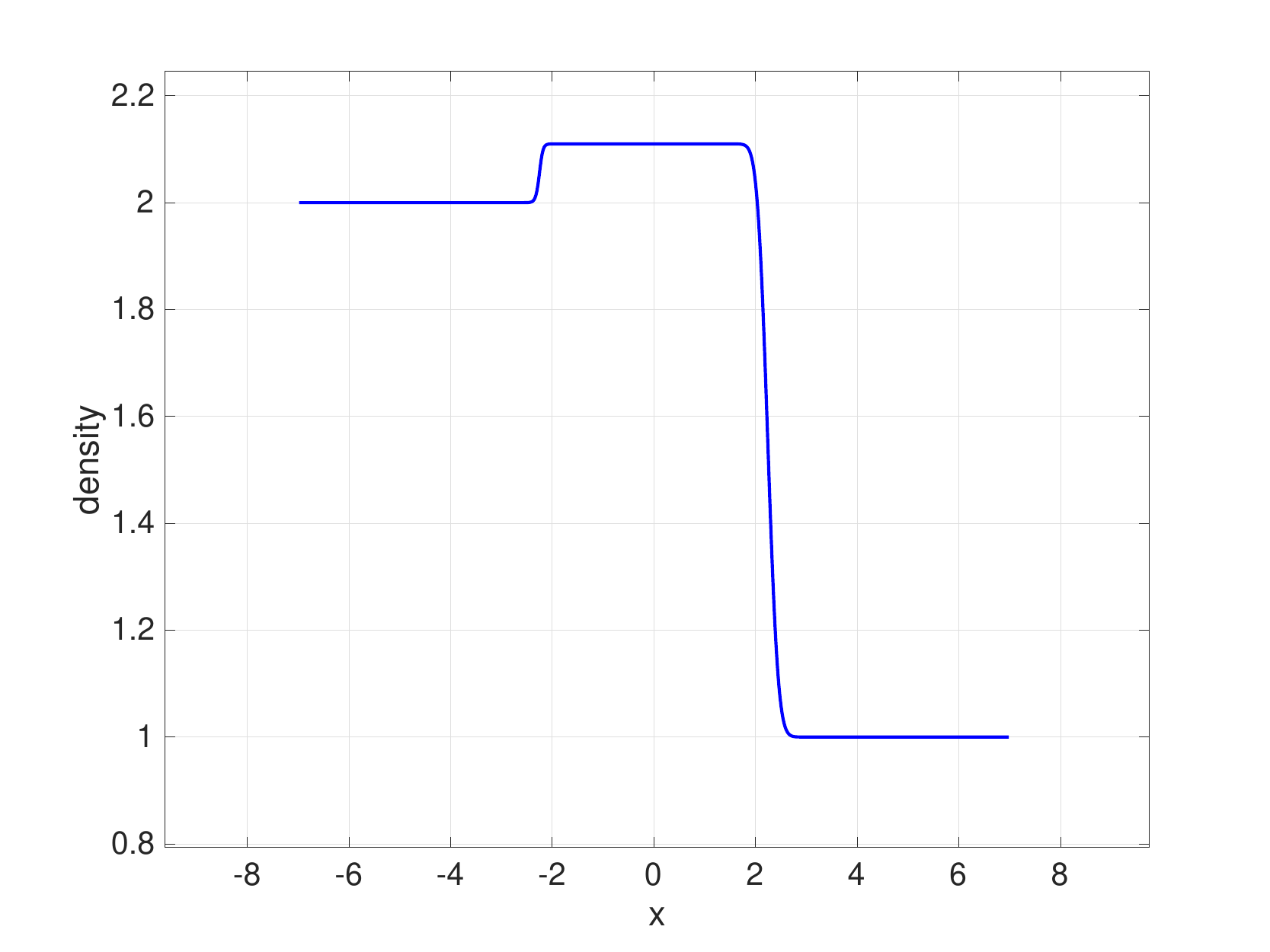}
	\end{minipage}\hfill
	\begin{minipage}{0.24\textwidth}
		\centering
		\includegraphics[width=\linewidth]{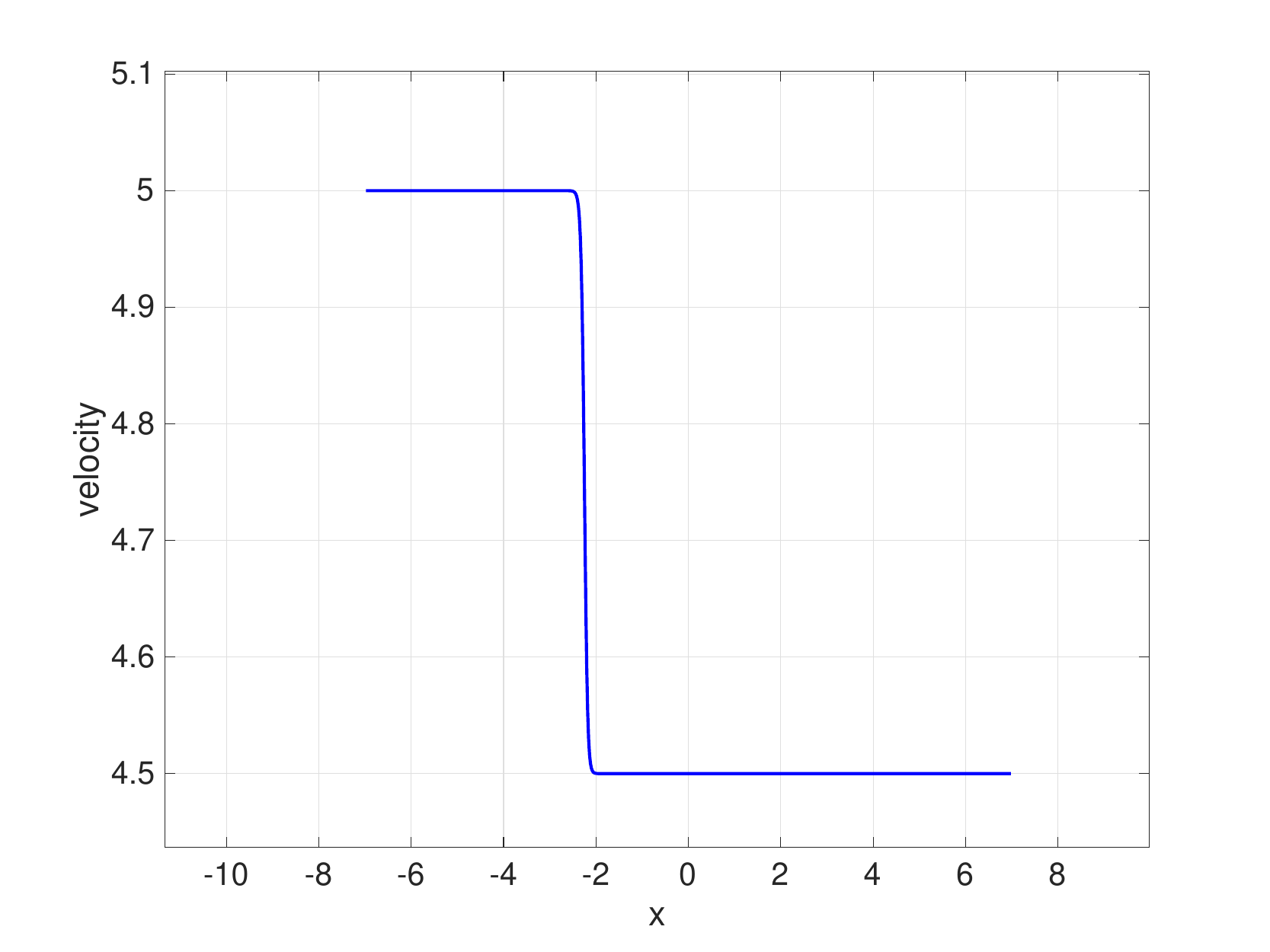}
	\end{minipage}\hfill
	\begin{minipage}{0.24\textwidth}
		\centering
		\includegraphics[width=\linewidth]{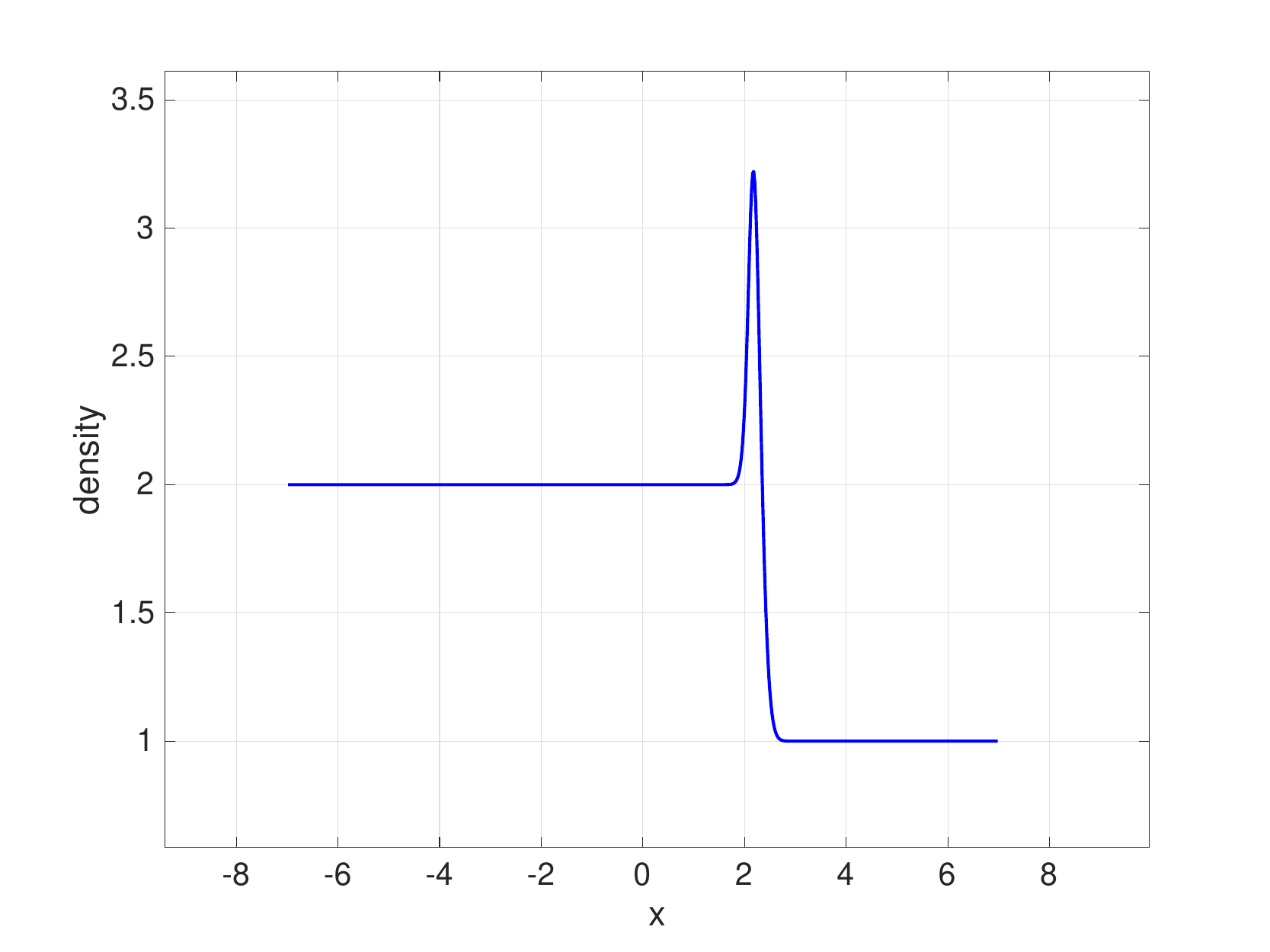}
	\end{minipage}\hfill
	\begin{minipage}{0.24\textwidth}
		\centering
		\includegraphics[width=\linewidth]{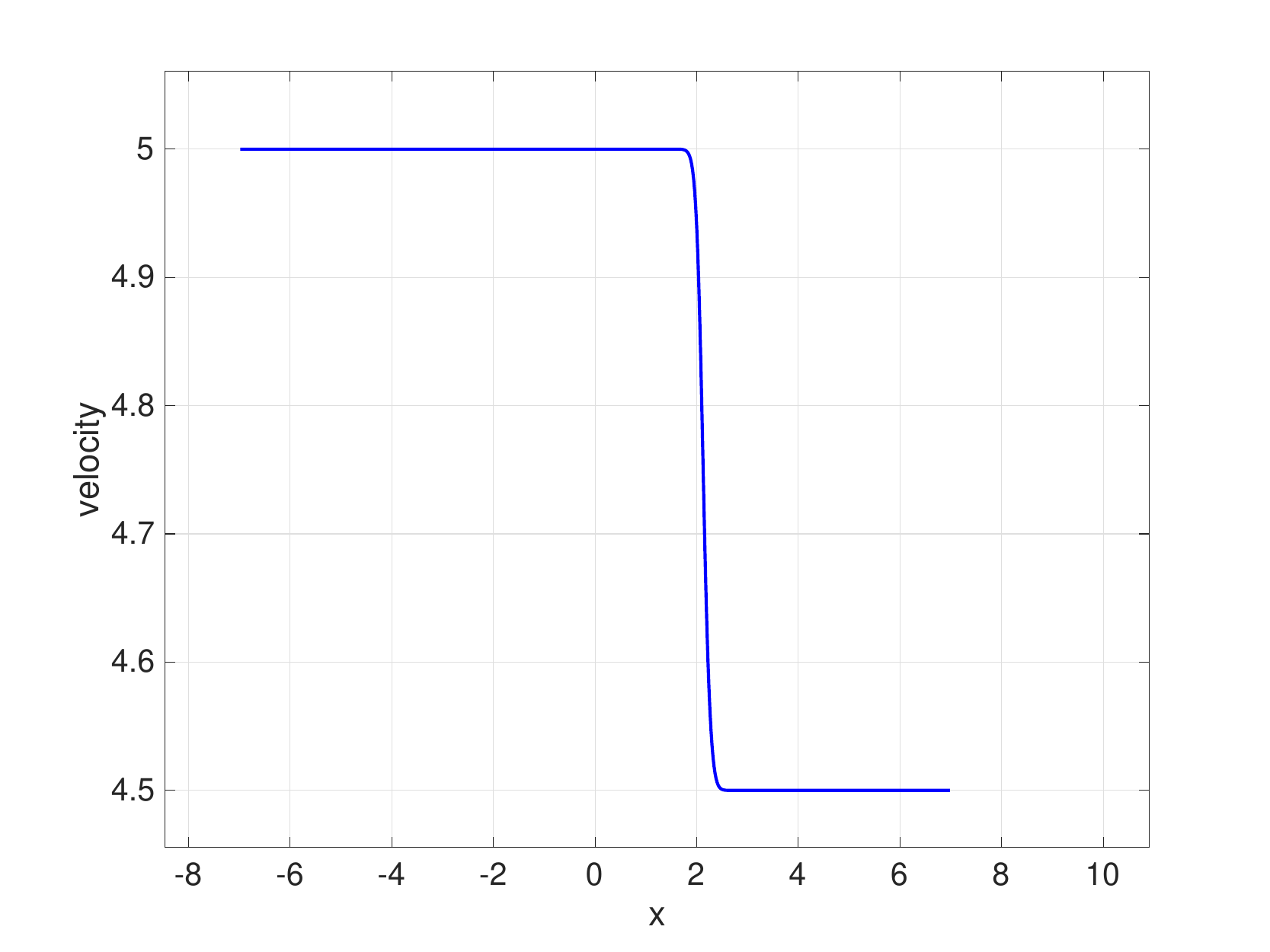}
	\end{minipage}
	\caption{Density and velocity for A=1, a=0.01 and A=0.01, a=0.001, respectively.}
	\label{p115}
\end{figure}

\begin{figure}[h!]
	\begin{minipage}{0.24\textwidth}
		\centering
		\includegraphics[width=\linewidth]{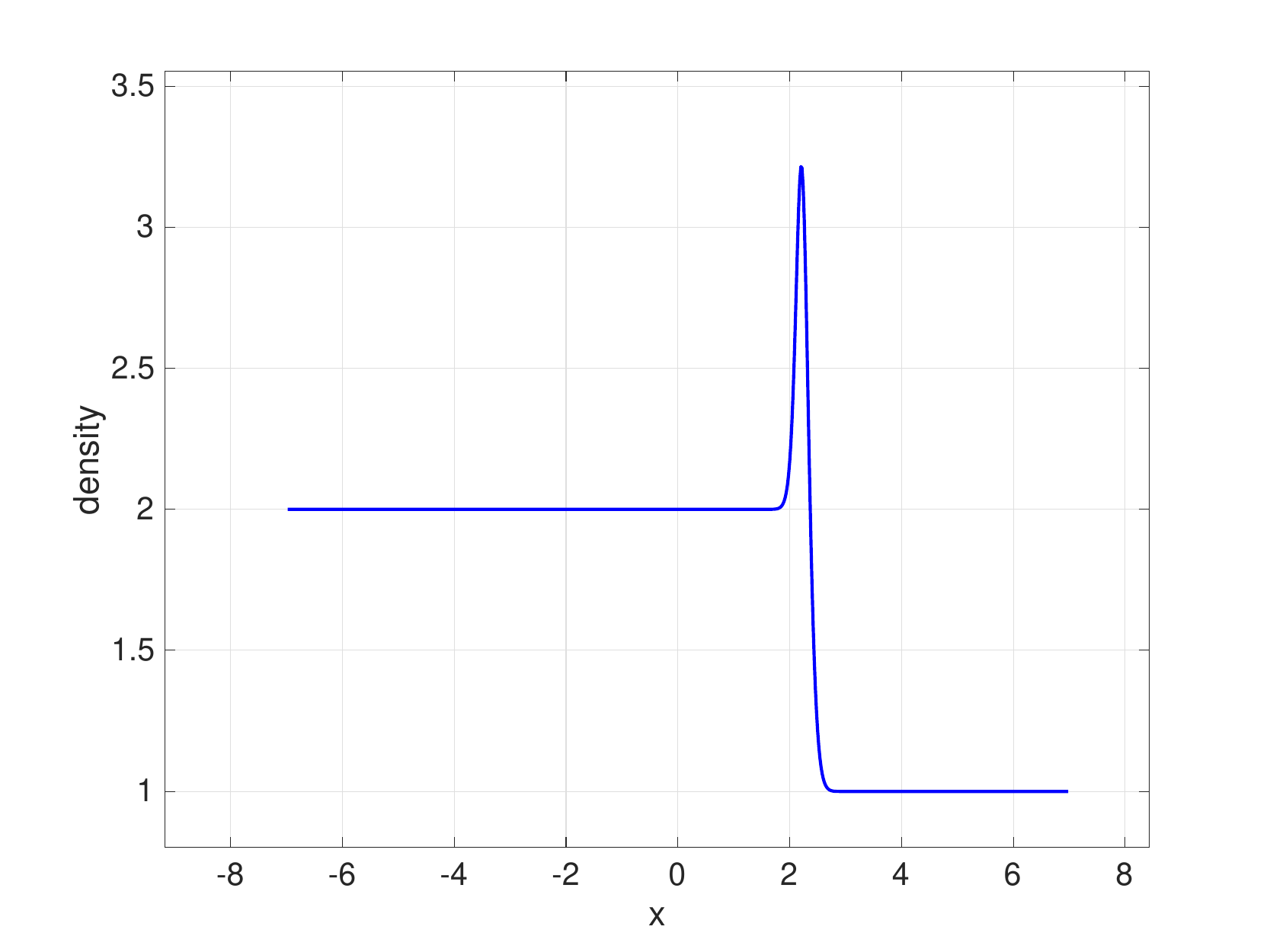}
	\end{minipage}\hfill
	\begin{minipage}{0.24\textwidth}
		\centering
		\includegraphics[width=\linewidth]{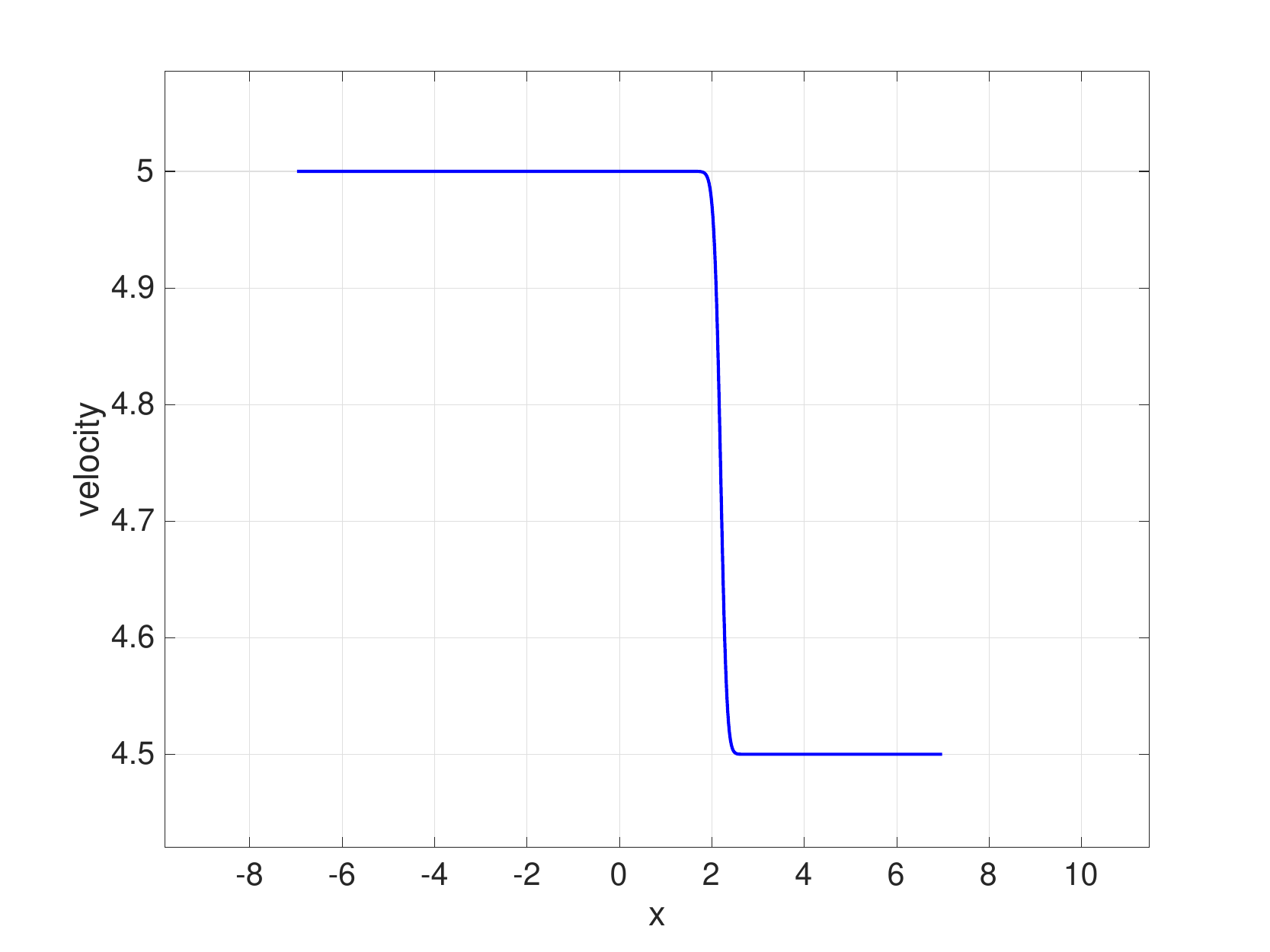}
	\end{minipage}\hfill
	\begin{minipage}{0.24\textwidth}
		\centering
		\includegraphics[width=\linewidth]{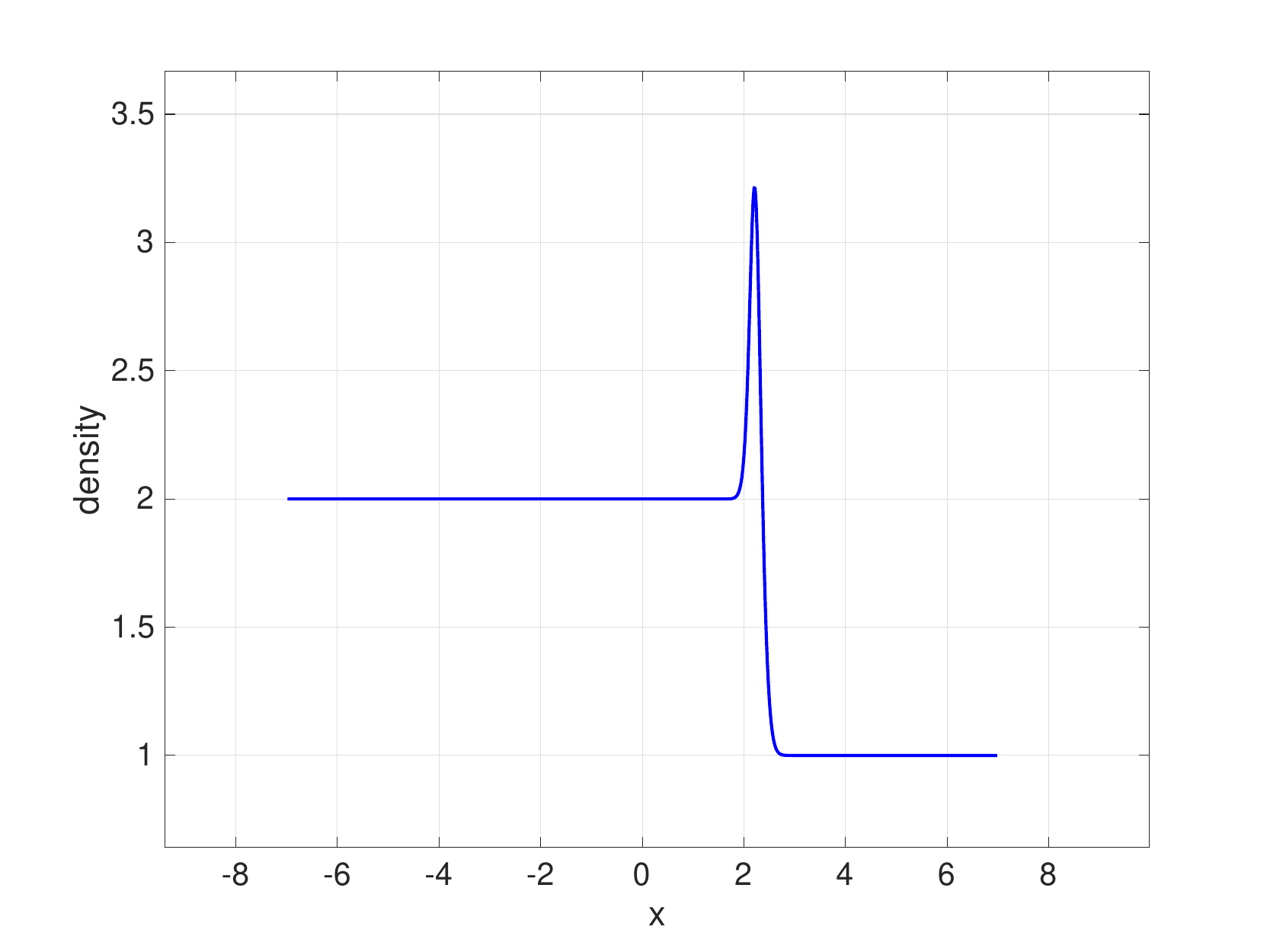}
	\end{minipage}\hfill
	\begin{minipage}{0.24\textwidth}
		\centering
		\includegraphics[width=\linewidth]{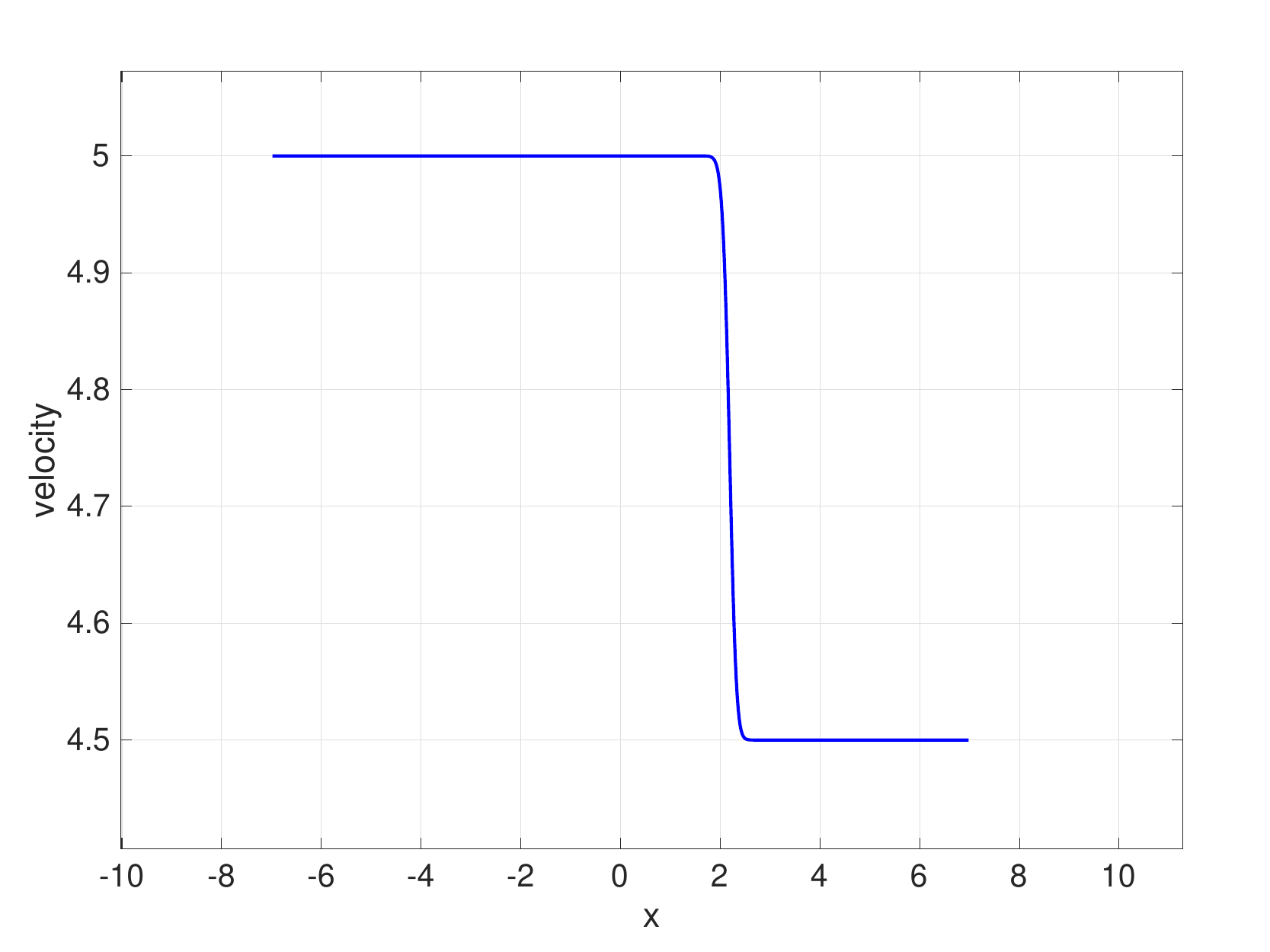}
	\end{minipage}
	\caption{Density and velocity for A=0.0001, a=0.00001 and A=0.00001, a=0.000001, respectively.}
	\label{p116}
\end{figure}
\begin{figure}[h!]
	\begin{minipage}{0.24\textwidth}
		\centering
		\includegraphics[width=\linewidth]{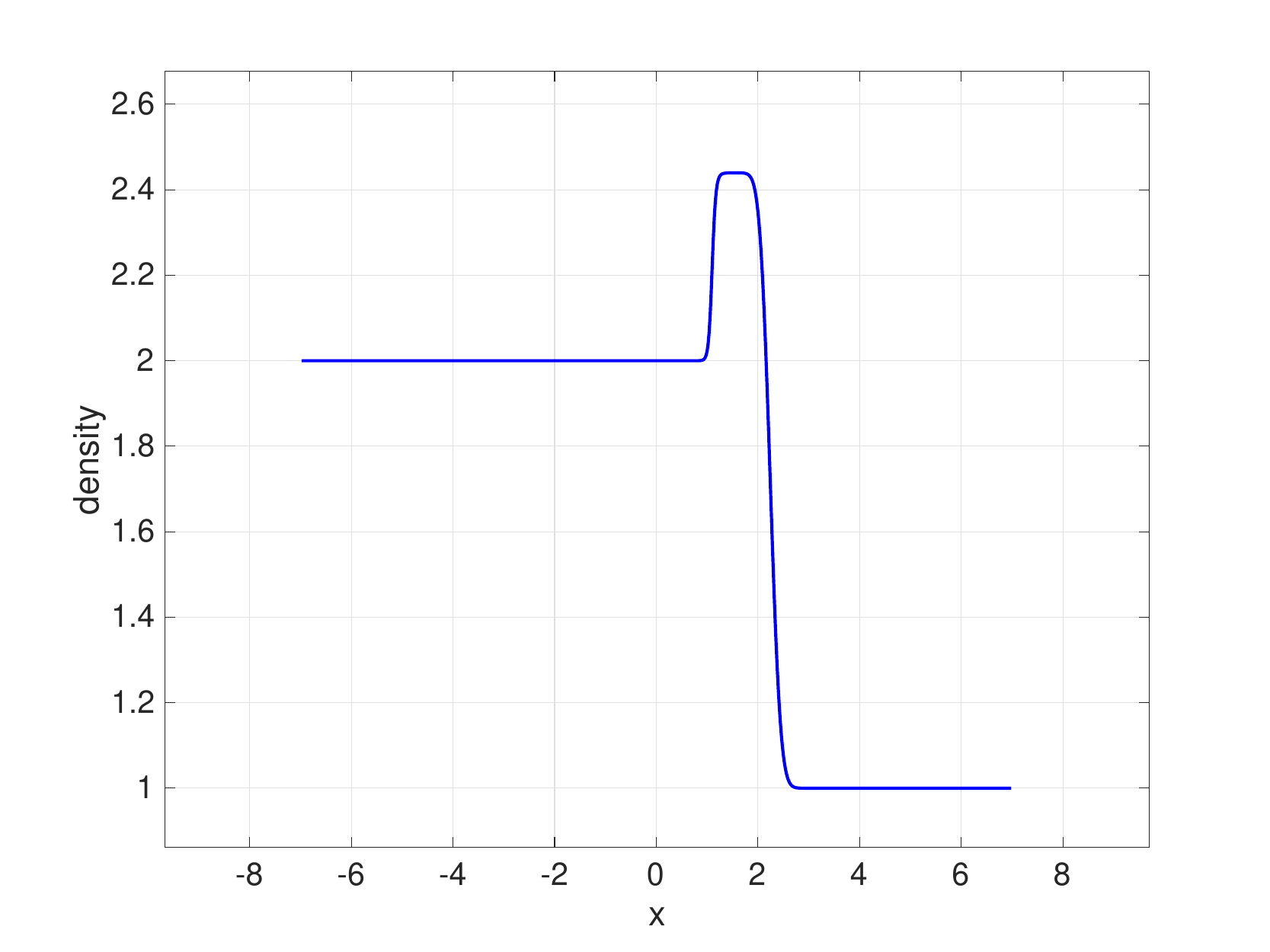}
	\end{minipage}\hfill
	\begin{minipage}{0.24\textwidth}
		\centering
		\includegraphics[width=\linewidth]{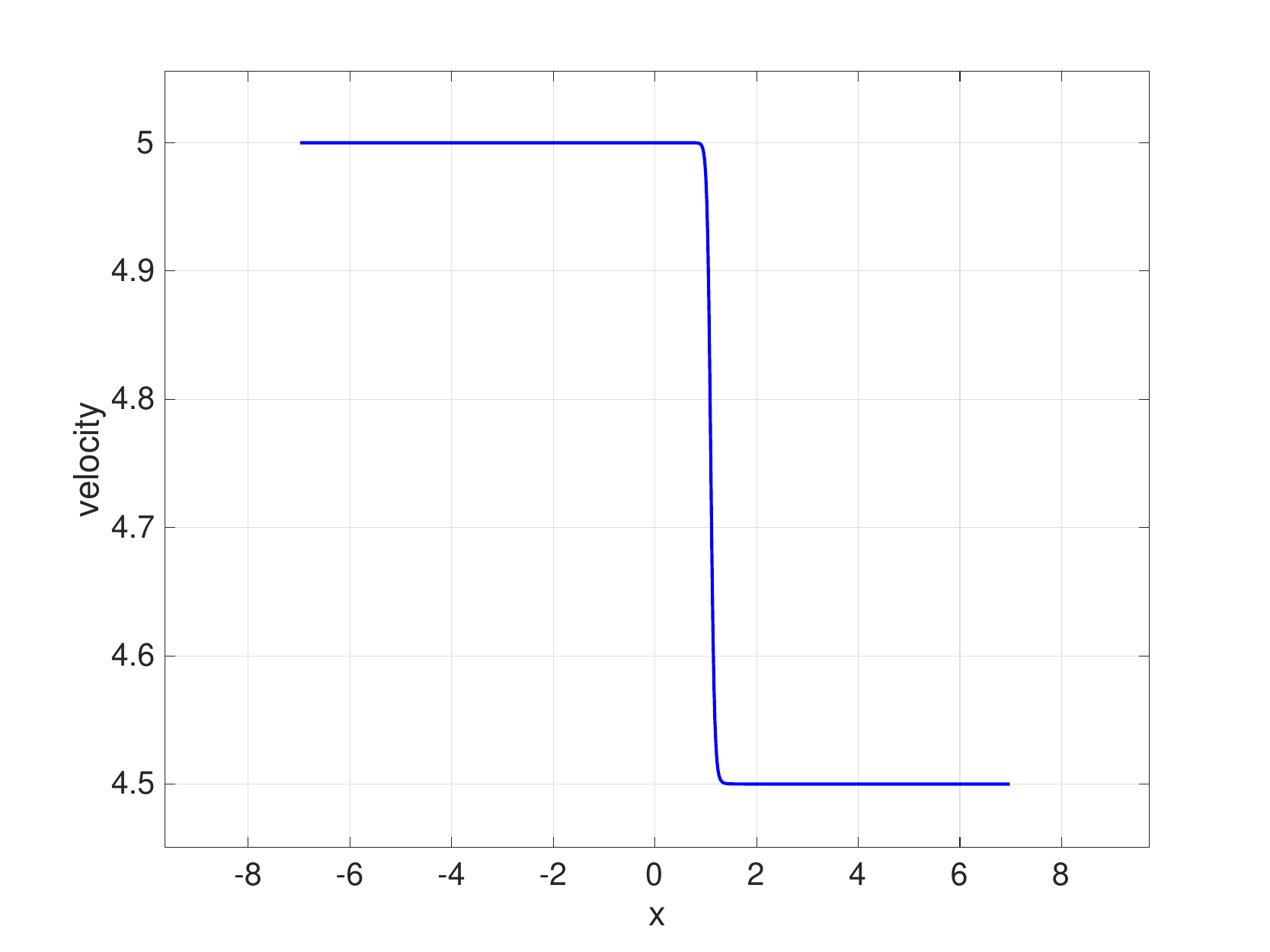}
	\end{minipage}\hfill
	\begin{minipage}{0.24\textwidth}
		\centering
		\includegraphics[width=\linewidth]{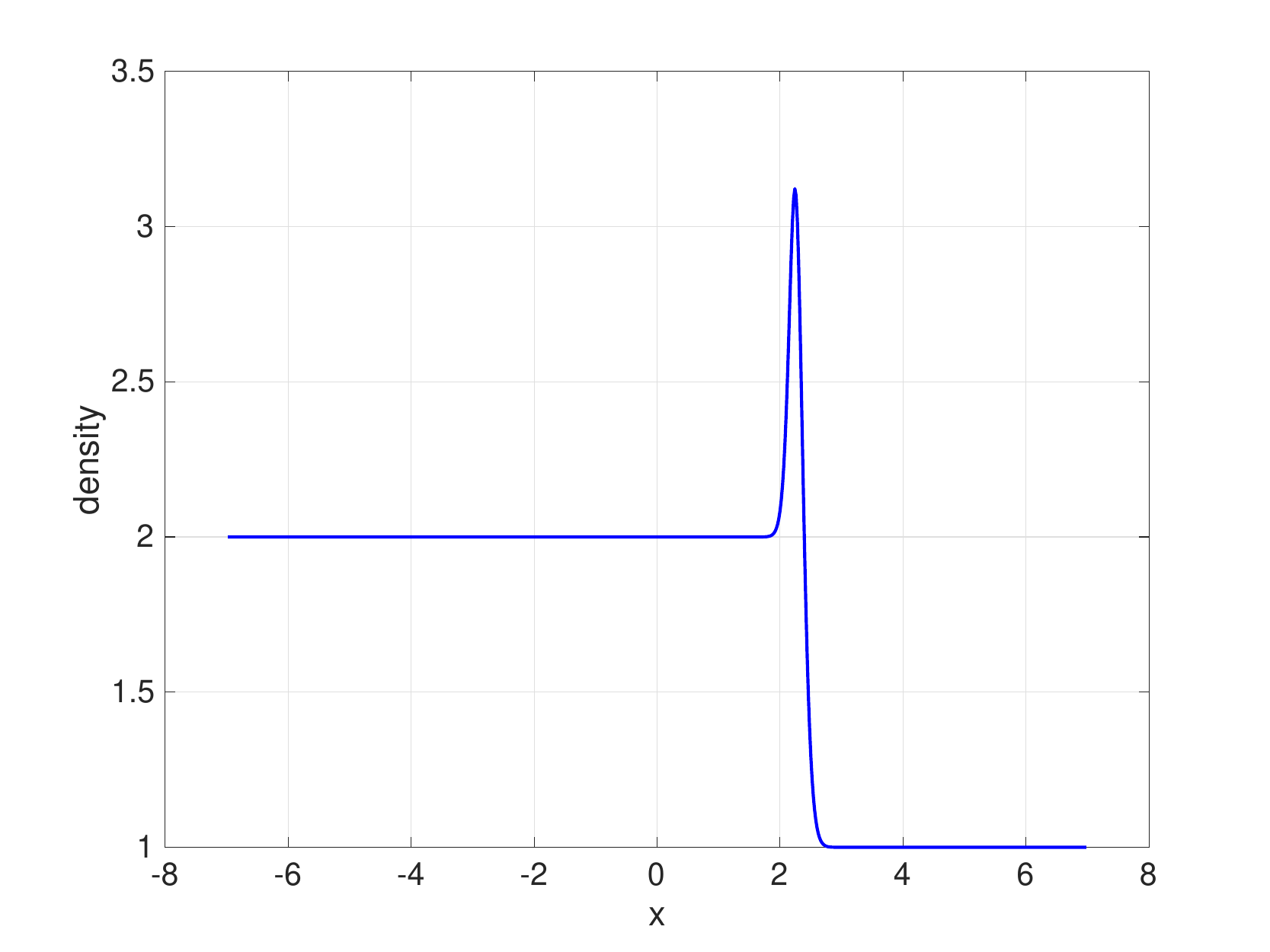}
	\end{minipage}\hfill
	\begin{minipage}{0.24\textwidth}
		\centering
		\includegraphics[width=\linewidth]{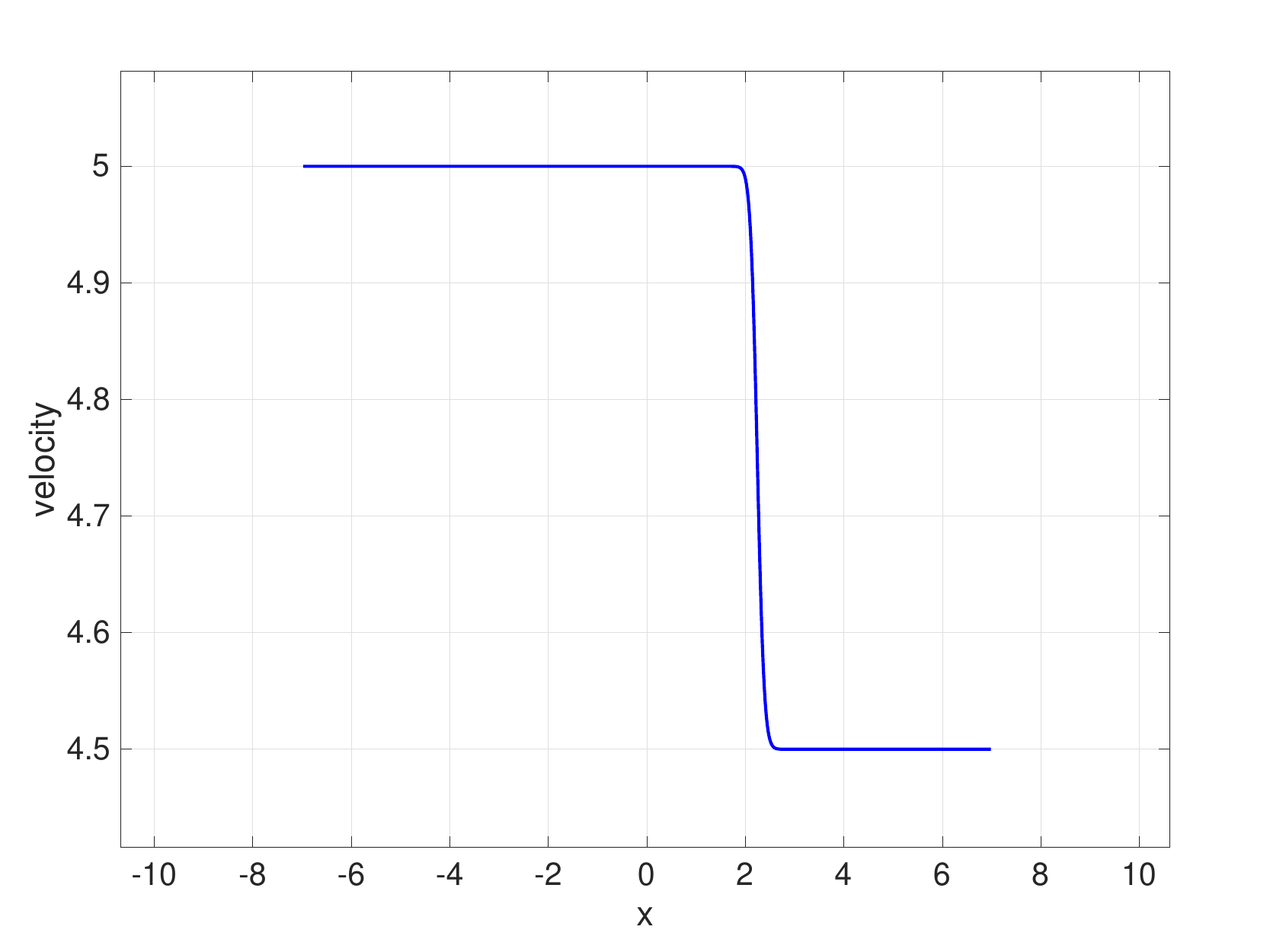}
	\end{minipage}
	\caption{Density and velocity for A=1, a=0.01 and A=0.01, a=0.001, respectively.}
	\label{p117}
\end{figure}

\begin{figure}[h!]
	\begin{minipage}{0.24\textwidth}
		\centering
		\includegraphics[width=\linewidth]{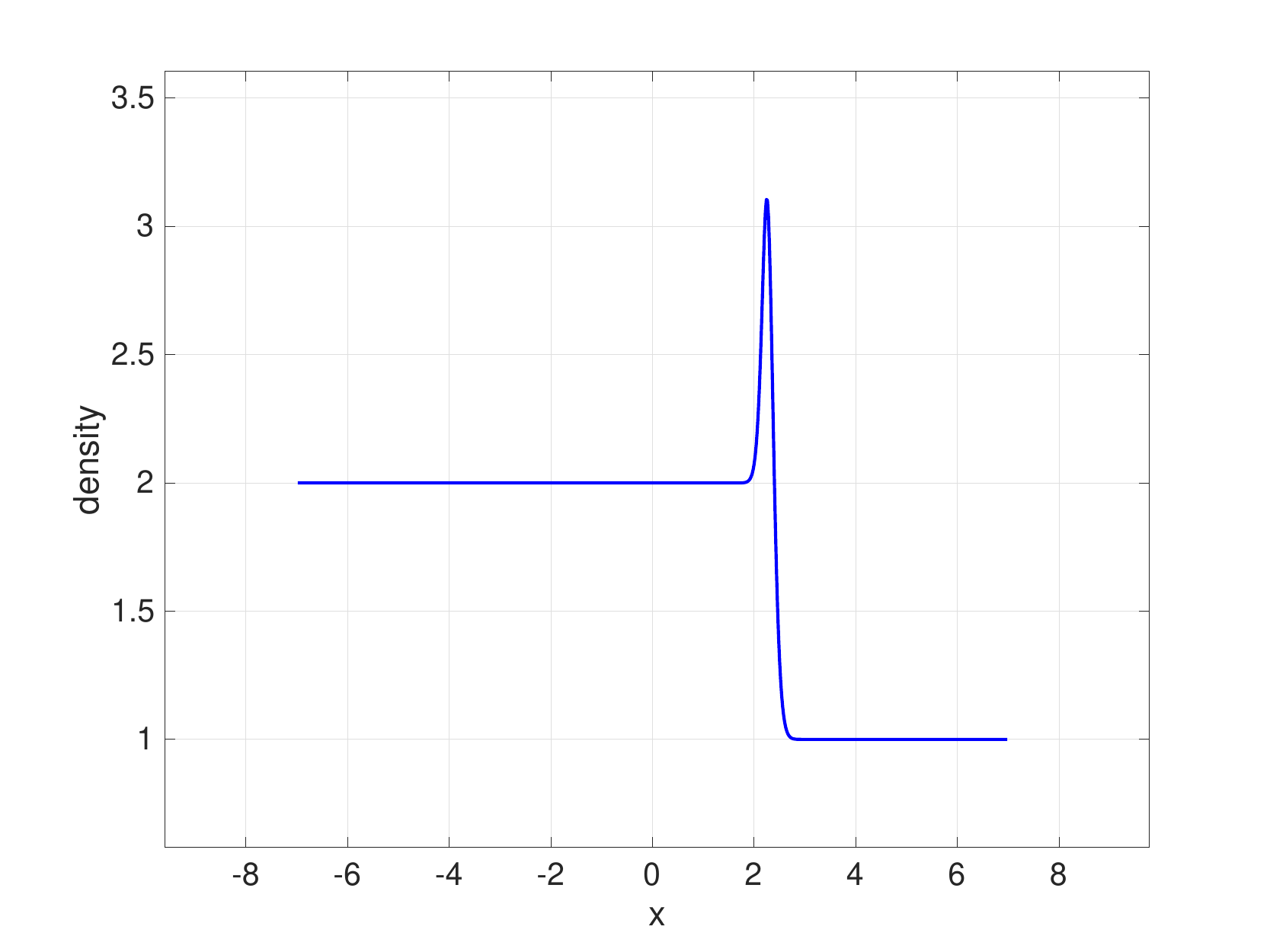}
	\end{minipage}\hfill
	\begin{minipage}{0.24\textwidth}
		\centering
		\includegraphics[width=\linewidth]{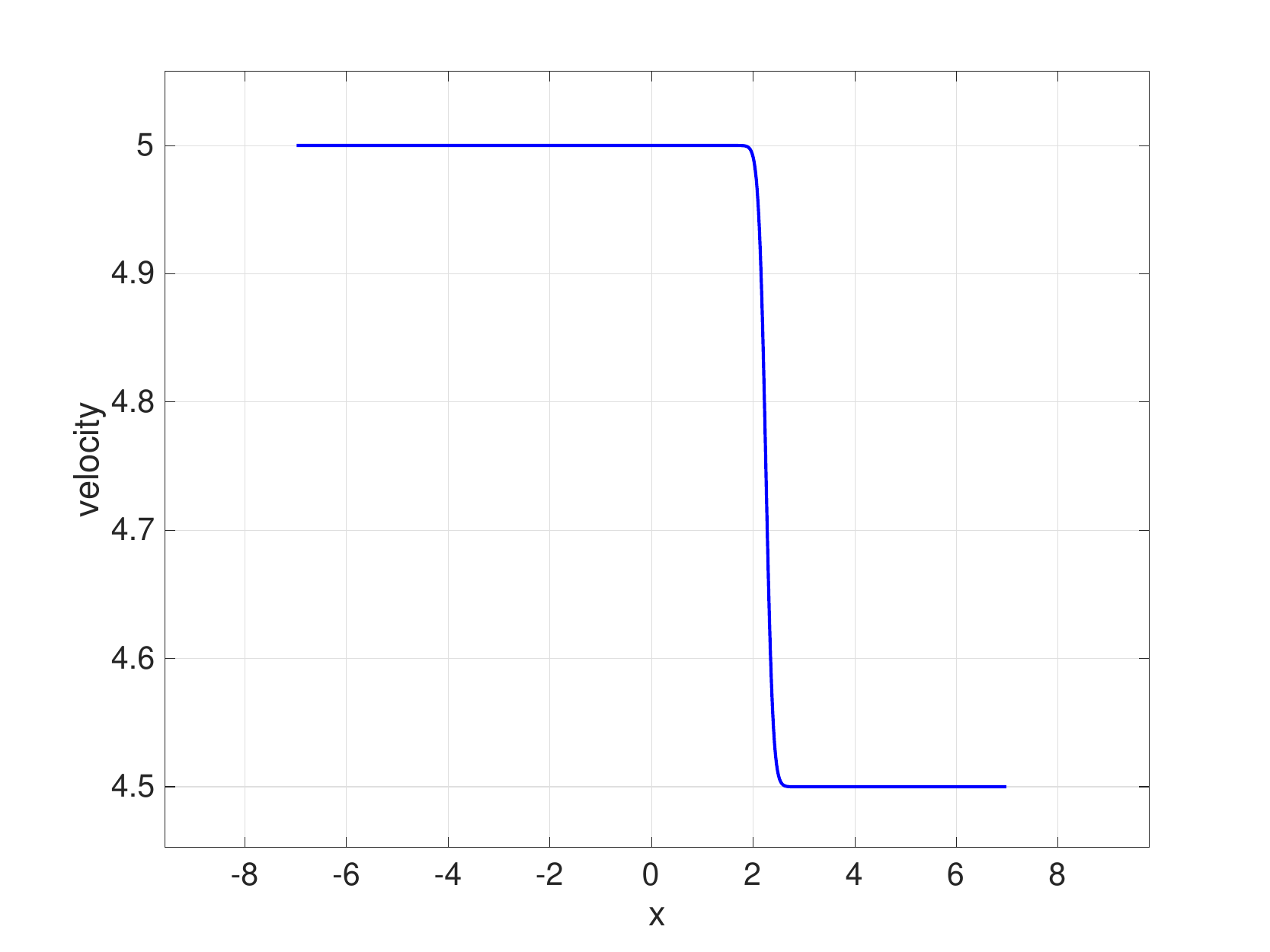}
	\end{minipage}\hfill
	\begin{minipage}{0.24\textwidth}
		\centering
		\includegraphics[width=\linewidth]{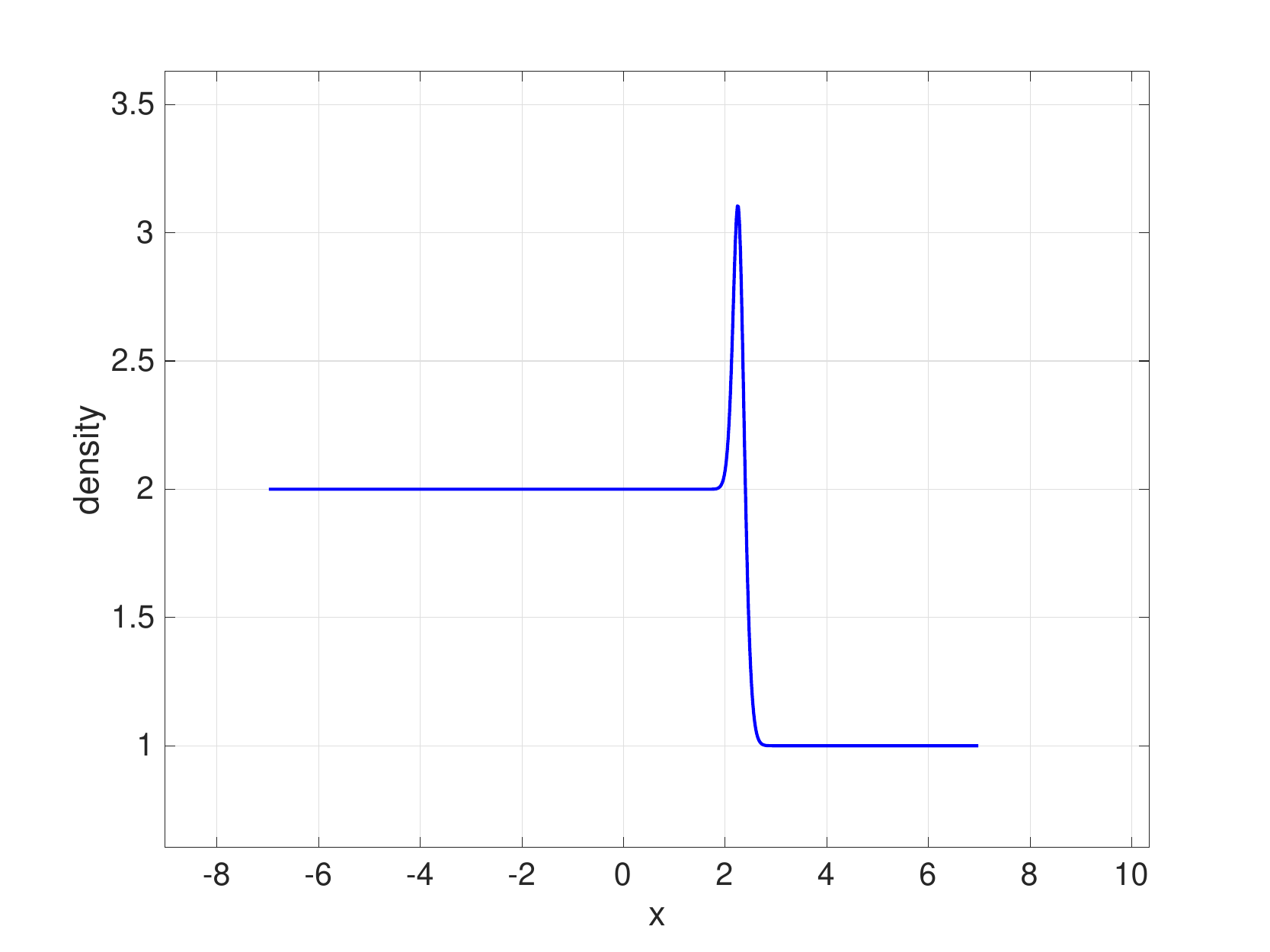}
	\end{minipage}\hfill
	\begin{minipage}{0.24\textwidth}
		\centering
		\includegraphics[width=\linewidth]{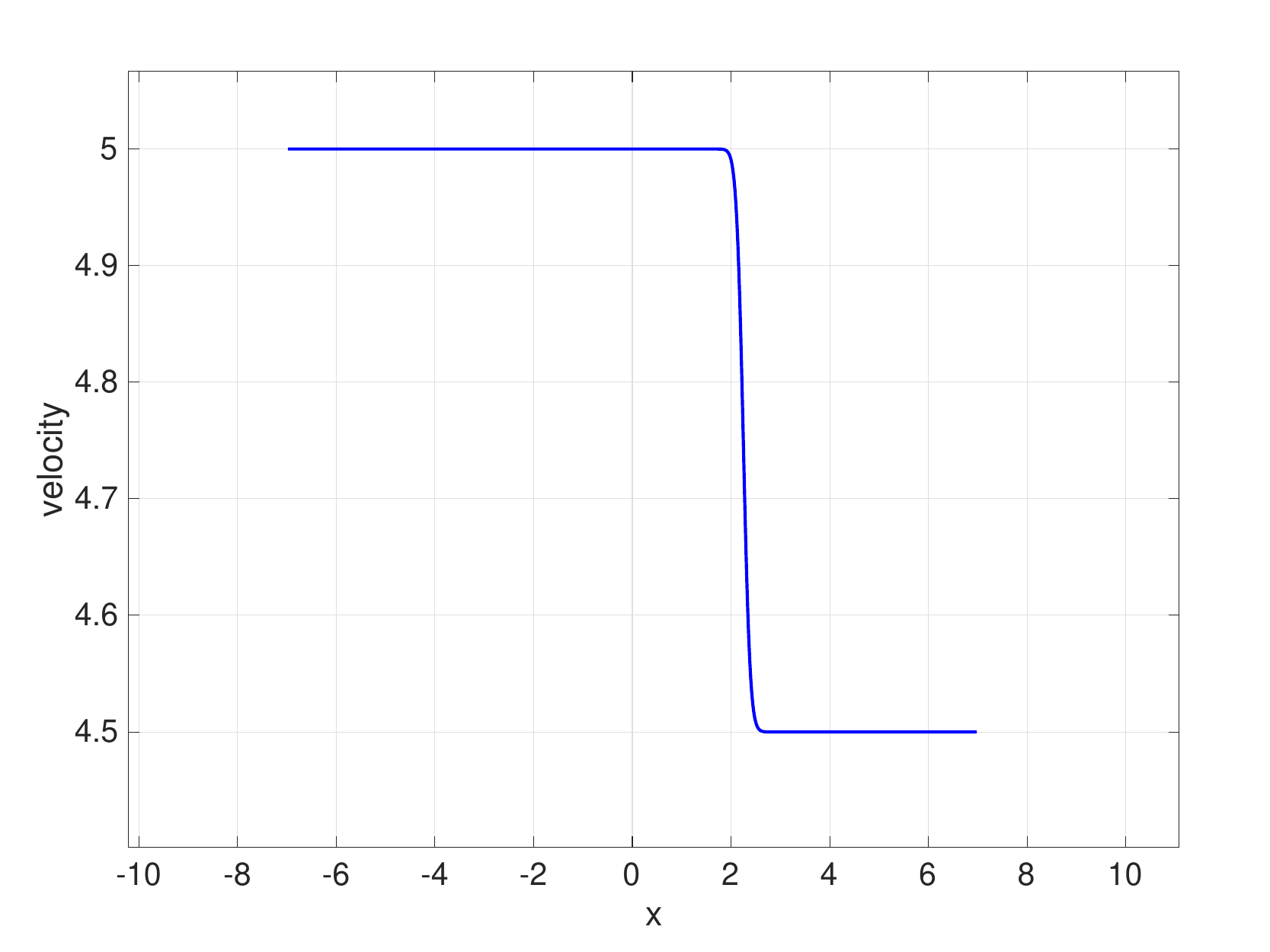}
	\end{minipage}
	\caption{Density and velocity for A=0.0001, a=0.00001 and A=0.00001, a=0.000001, respectively.}
	\label{p118}
\end{figure}
\noindent \textbf{Case (iii).} For the case $ \upsilon_{r}> \upsilon_{l},$ we take the following initial data
\begin{equation}\label{p109}
	(\varrho_{l, r}, \upsilon_{l, r})= \begin{cases}
		(1, 5), \qquad x<0,
		\\ (2, 7), \qquad x>0,
	\end{cases}
\end{equation}with $B=1.$ The density and velocity for $\kappa=0.5,$ and $\Gamma=2$ with different pairs of values of $A$ and  $a$ are shown in Figures \ref{p119}-\ref{p120}. Similarly, for $\kappa=0.25,$ and $\Gamma=1,$ density and velocity are shown in Figures \ref{p121}-\ref{p122}. \begin{figure}[h!]
\begin{minipage}{0.24\textwidth}
	\centering
	\includegraphics[width=\linewidth]{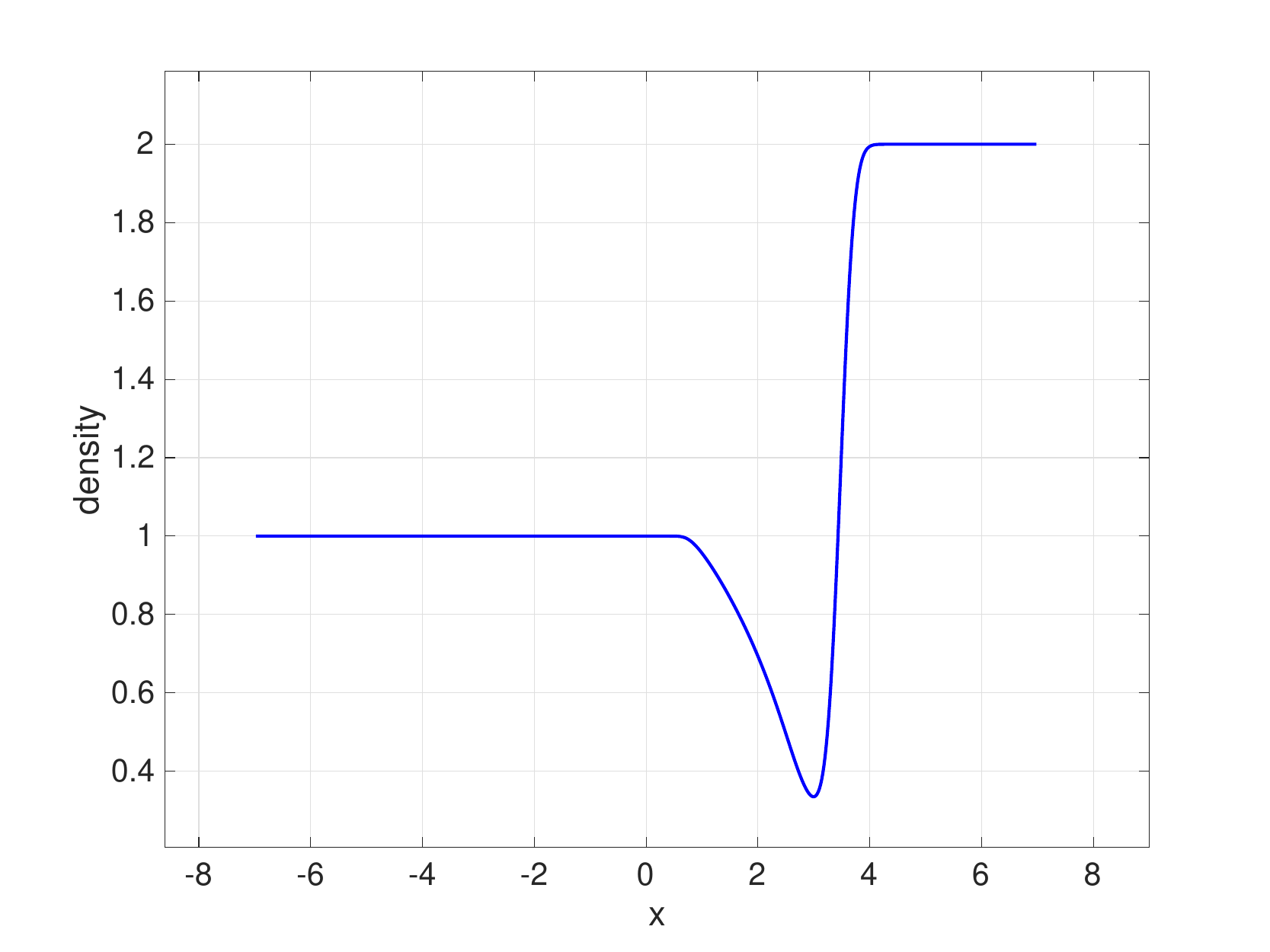}
\end{minipage}\hfill
\begin{minipage}{0.24\textwidth}
	\centering
	\includegraphics[width=\linewidth]{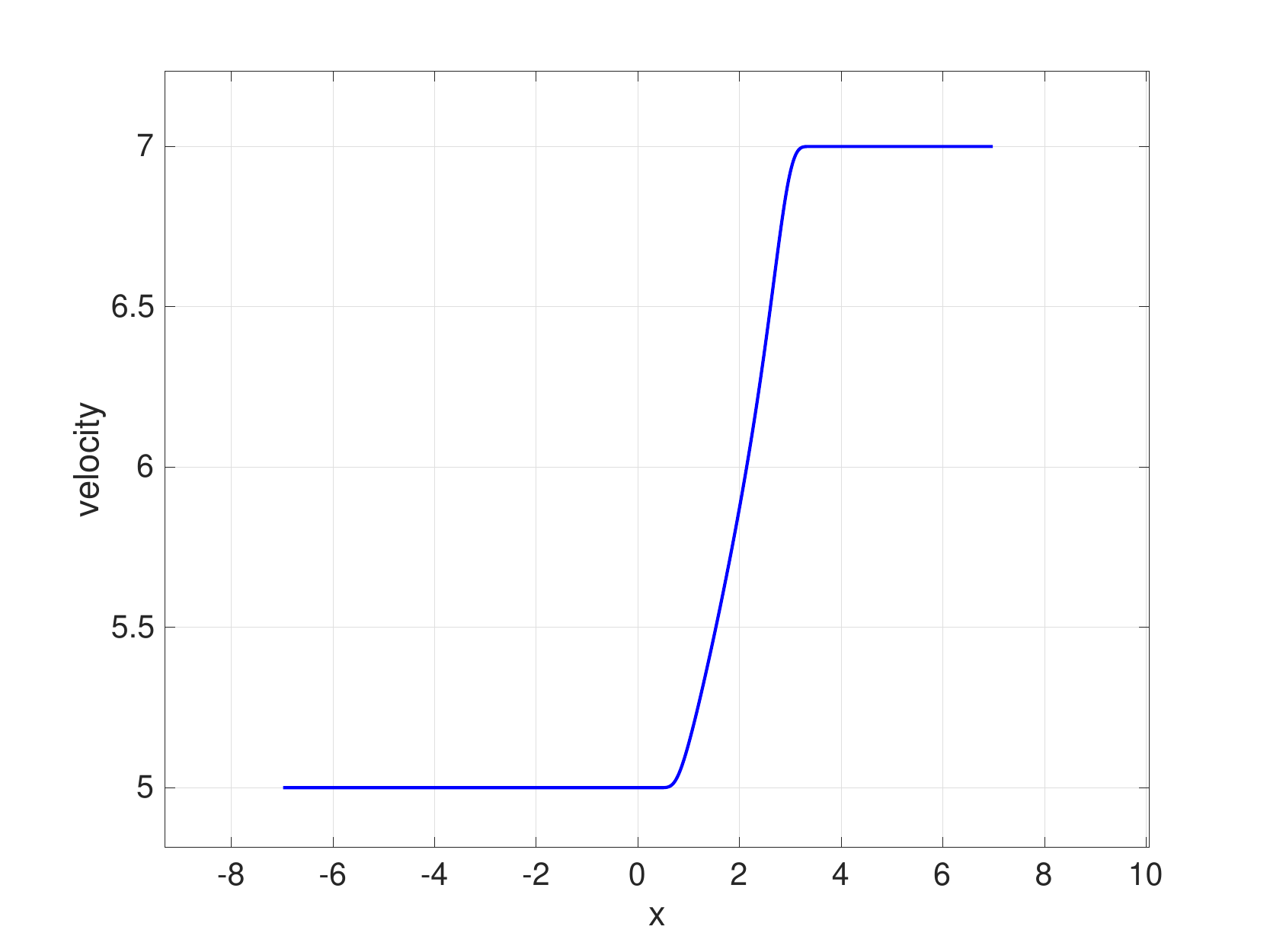}
\end{minipage}\hfill
\begin{minipage}{0.24\textwidth}
	\centering
	\includegraphics[width=\linewidth]{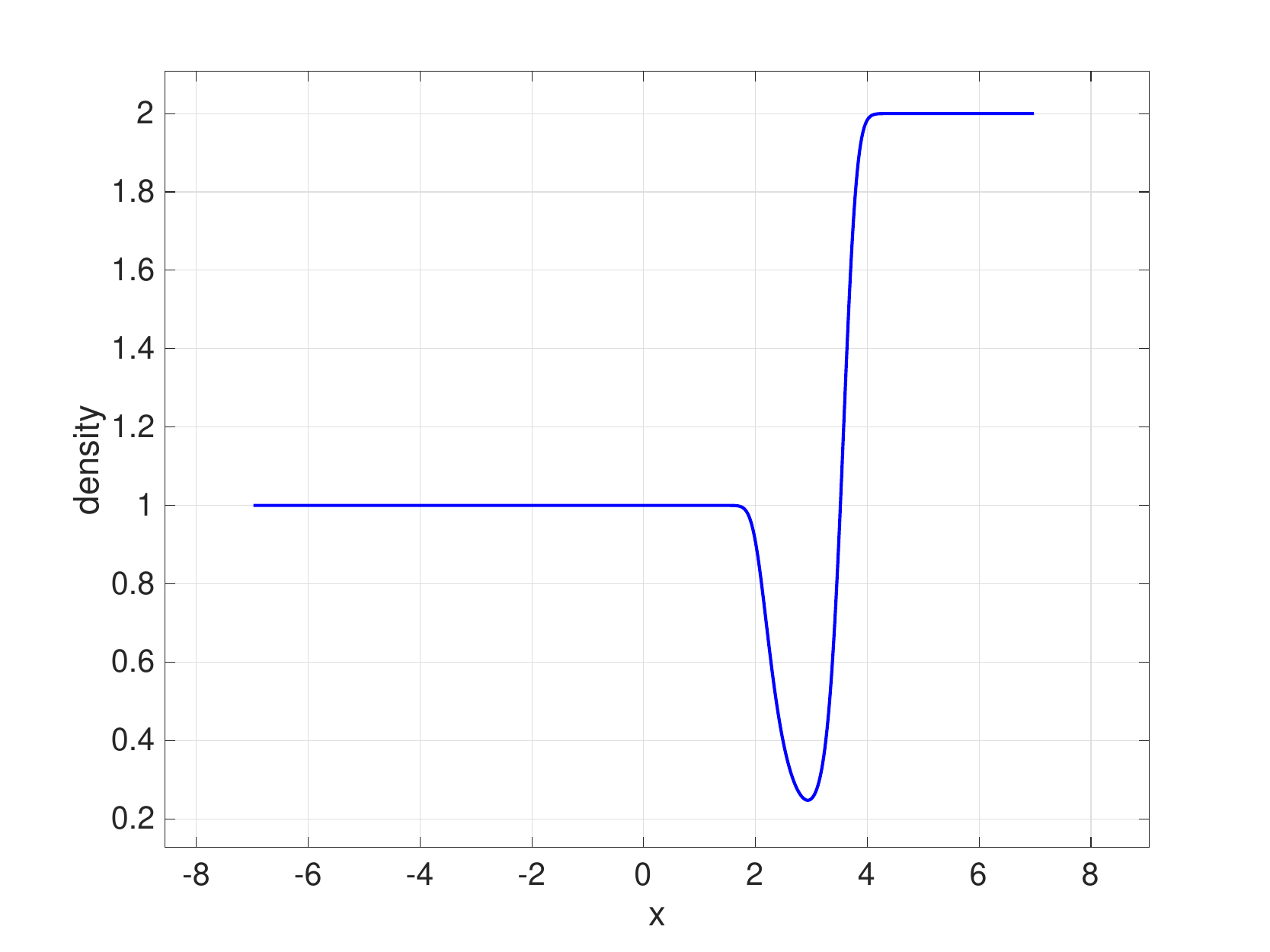}
\end{minipage}\hfill
\begin{minipage}{0.24\textwidth}
	\centering
	\includegraphics[width=\linewidth]{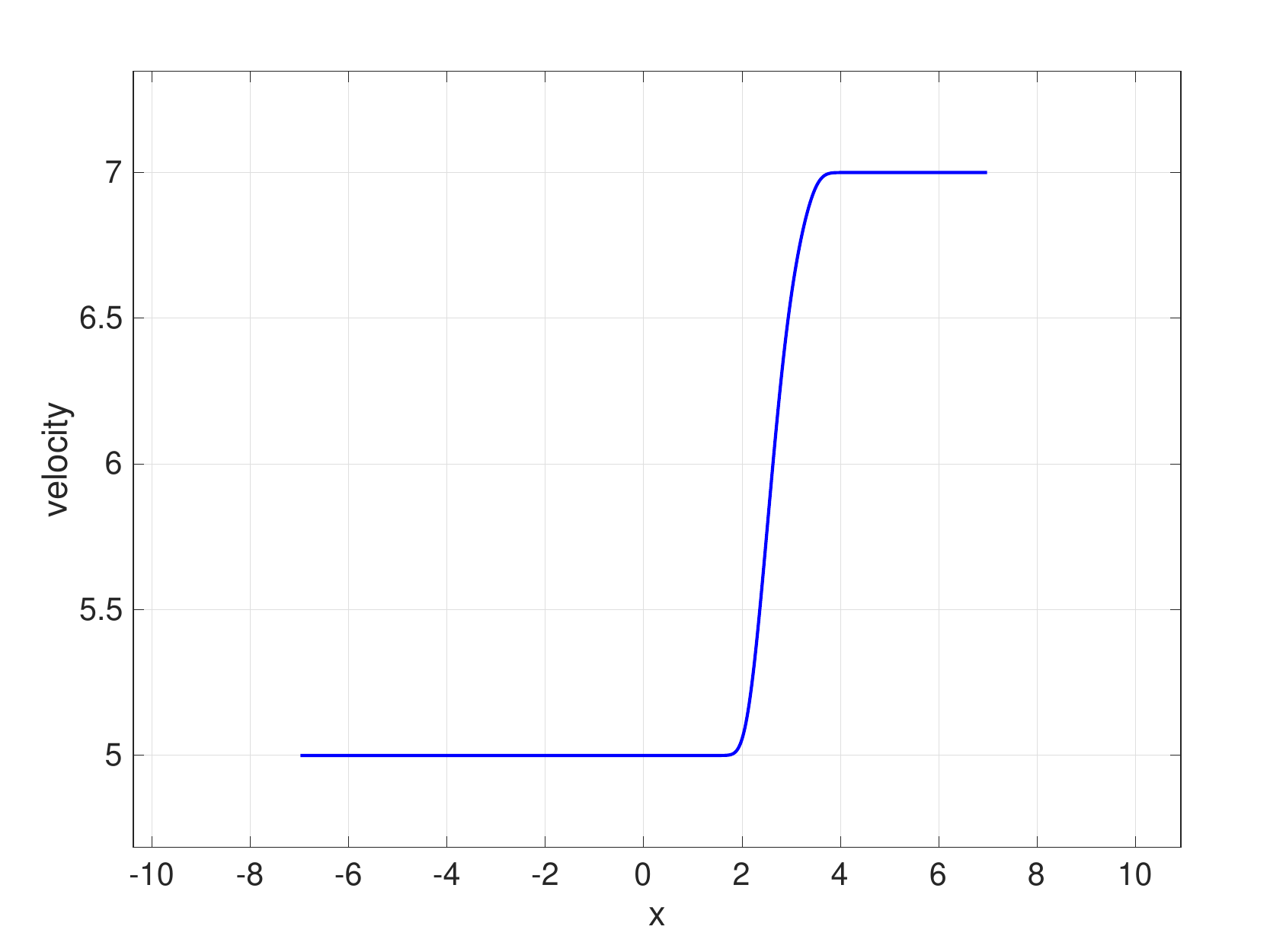}
\end{minipage}
\caption{Density and velocity for A=1, a=0.1 and A=0.01, a=0.001, respectively.}
\label{p119}
\end{figure}

\begin{figure}[h!]
\begin{minipage}{0.24\textwidth}
	\centering
	\includegraphics[width=\linewidth]{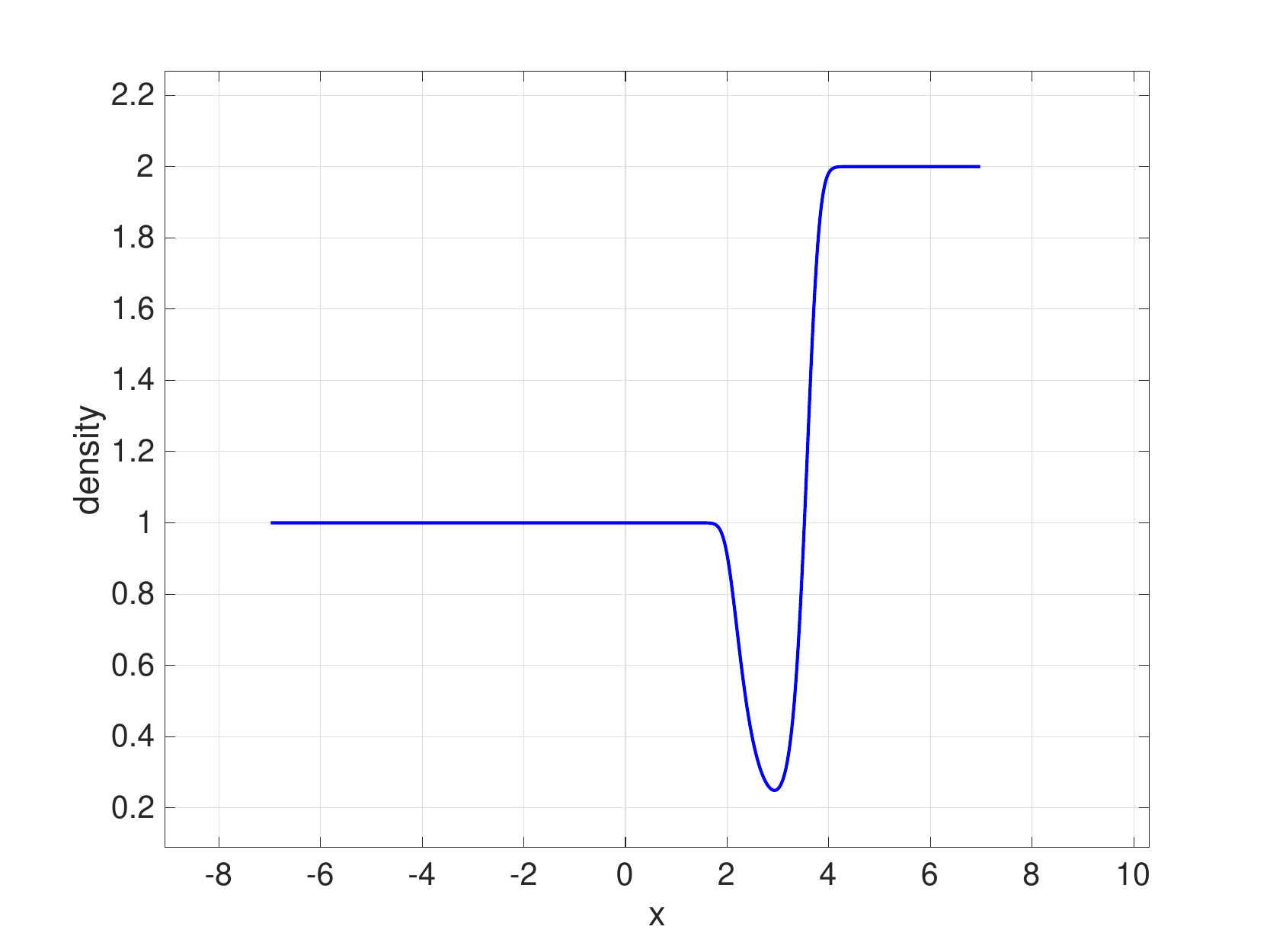}
\end{minipage}\hfill
\begin{minipage}{0.24\textwidth}
	\centering
	\includegraphics[width=\linewidth]{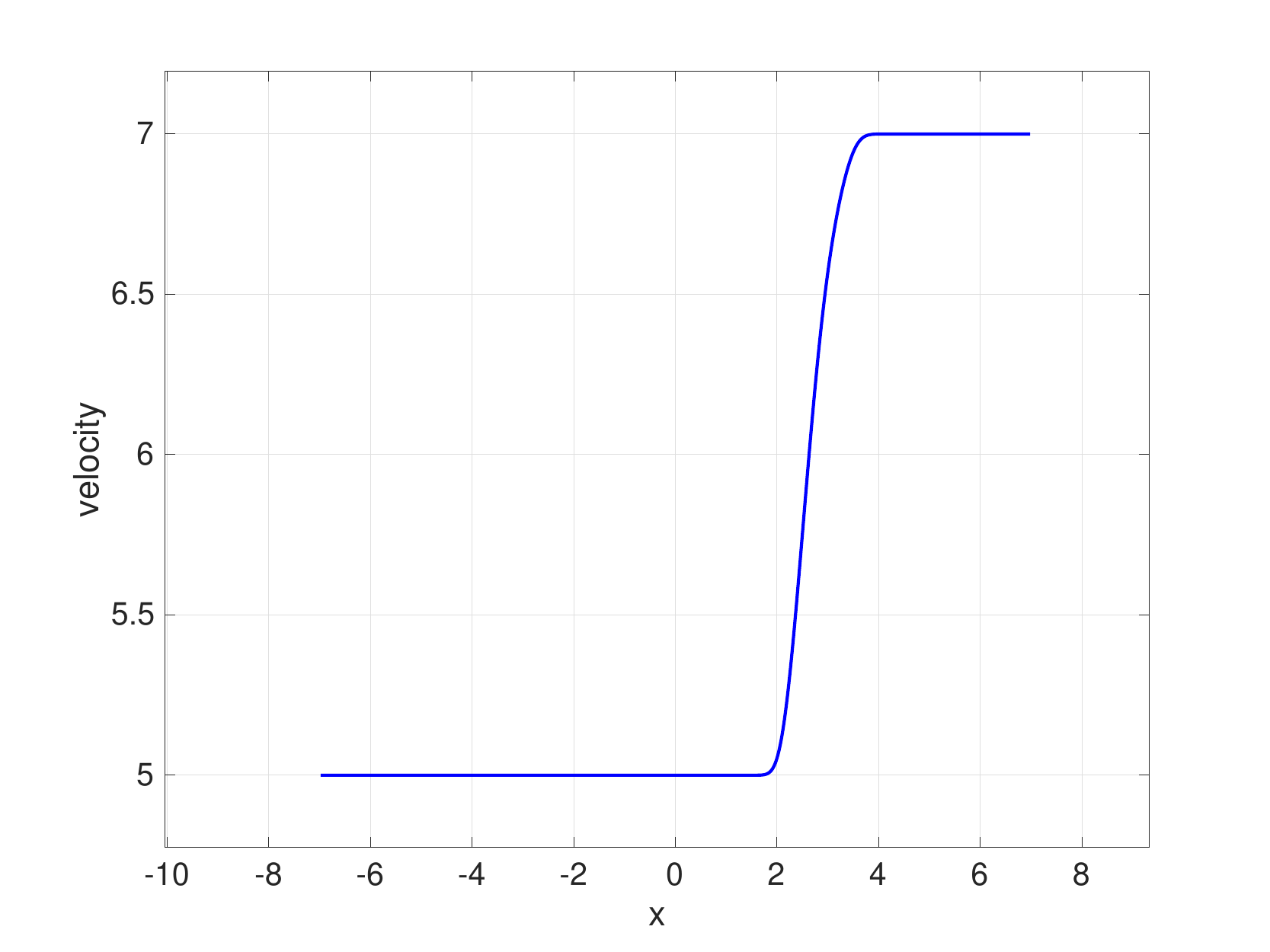}
\end{minipage}\hfill
\begin{minipage}{0.24\textwidth}
	\centering
	\includegraphics[width=\linewidth]{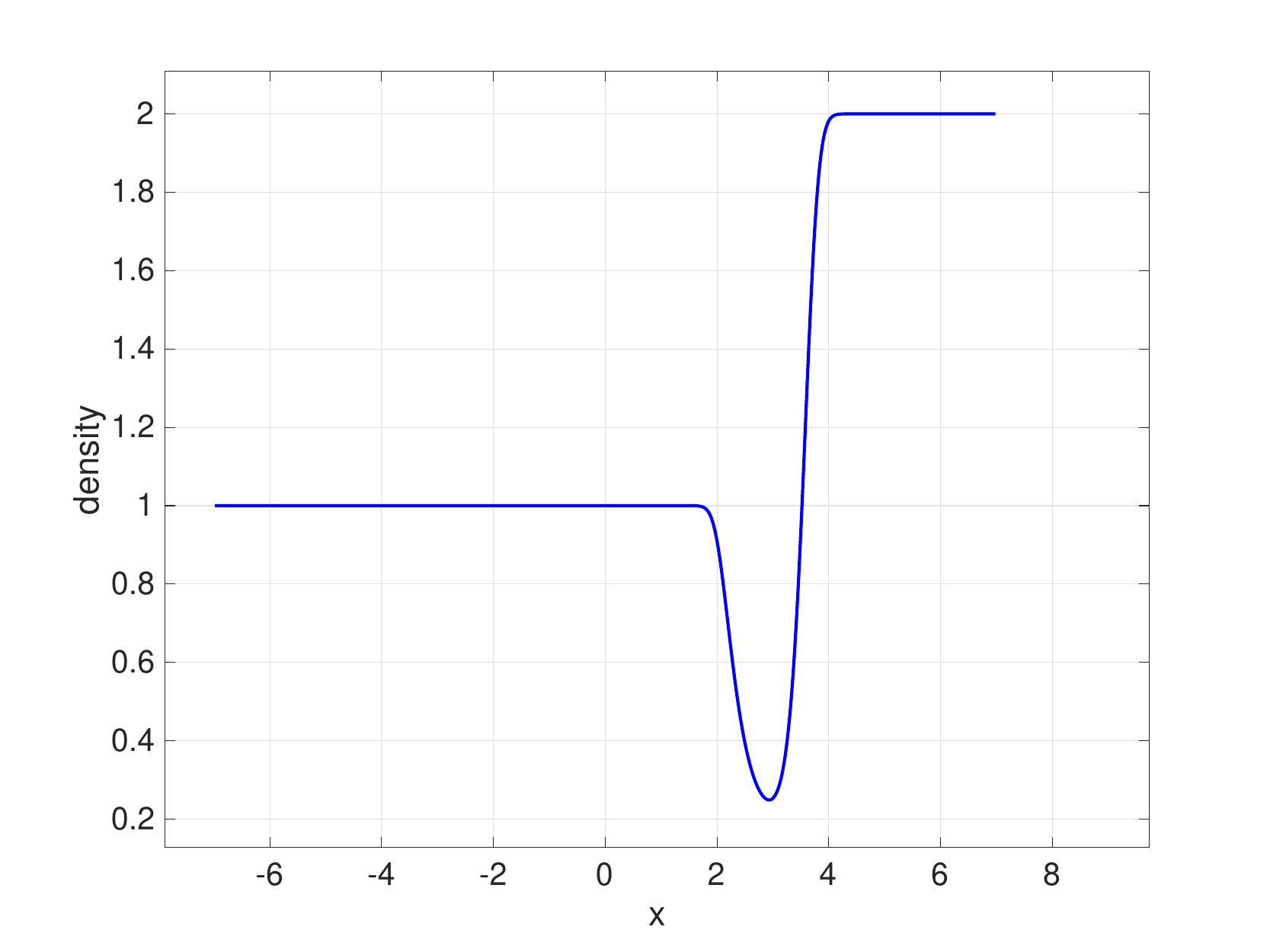}
\end{minipage}\hfill
\begin{minipage}{0.24\textwidth}
	\centering
	\includegraphics[width=\linewidth]{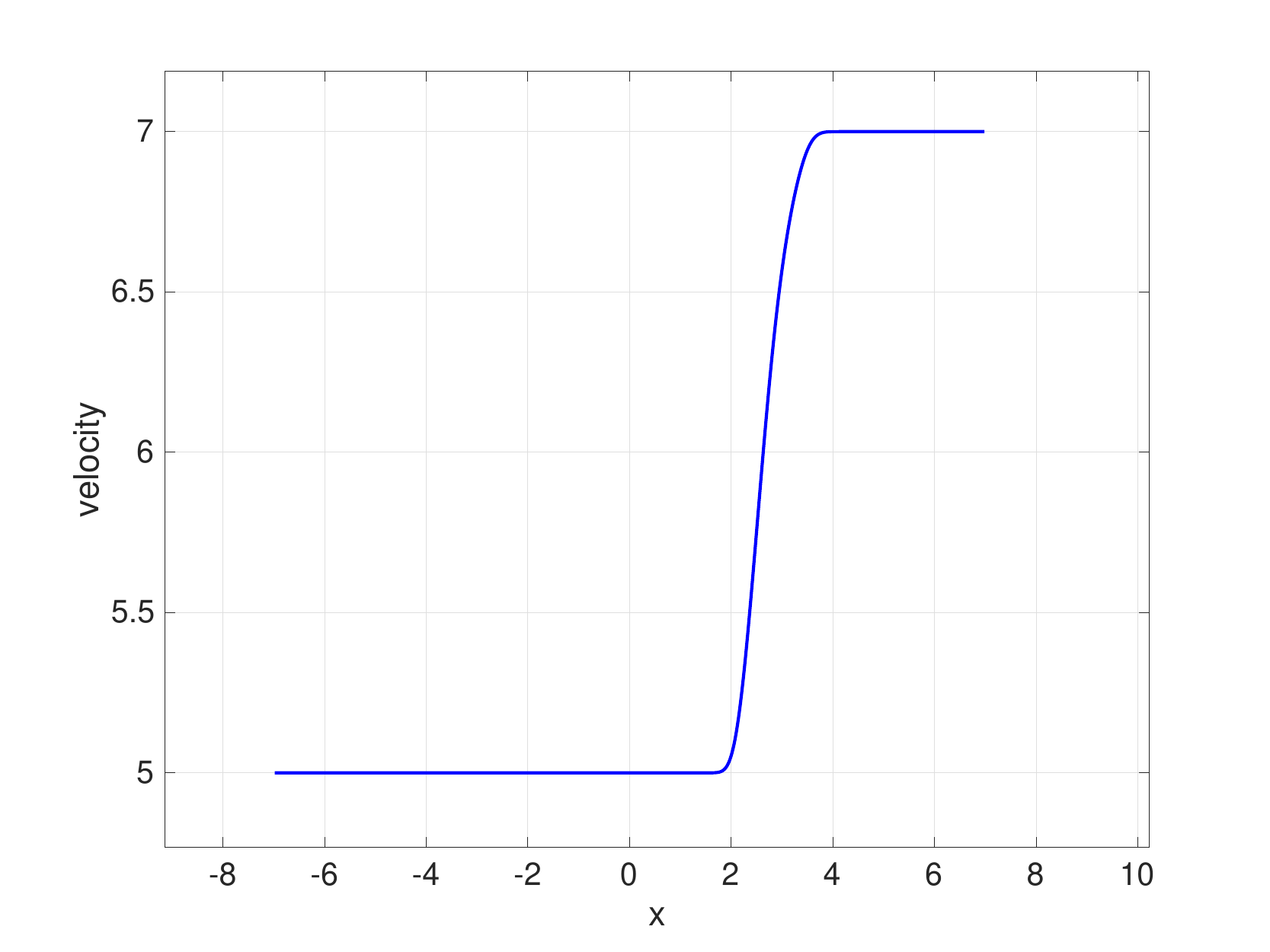}
\end{minipage}
\caption{Density and velocity for A=0.001, a=0.0001 and A=0.00001, a=0.000001, respectively.}
\label{p120}
\end{figure}

 \begin{figure}[h!]
	\begin{minipage}{0.24\textwidth}
		\centering
		\includegraphics[width=\linewidth]{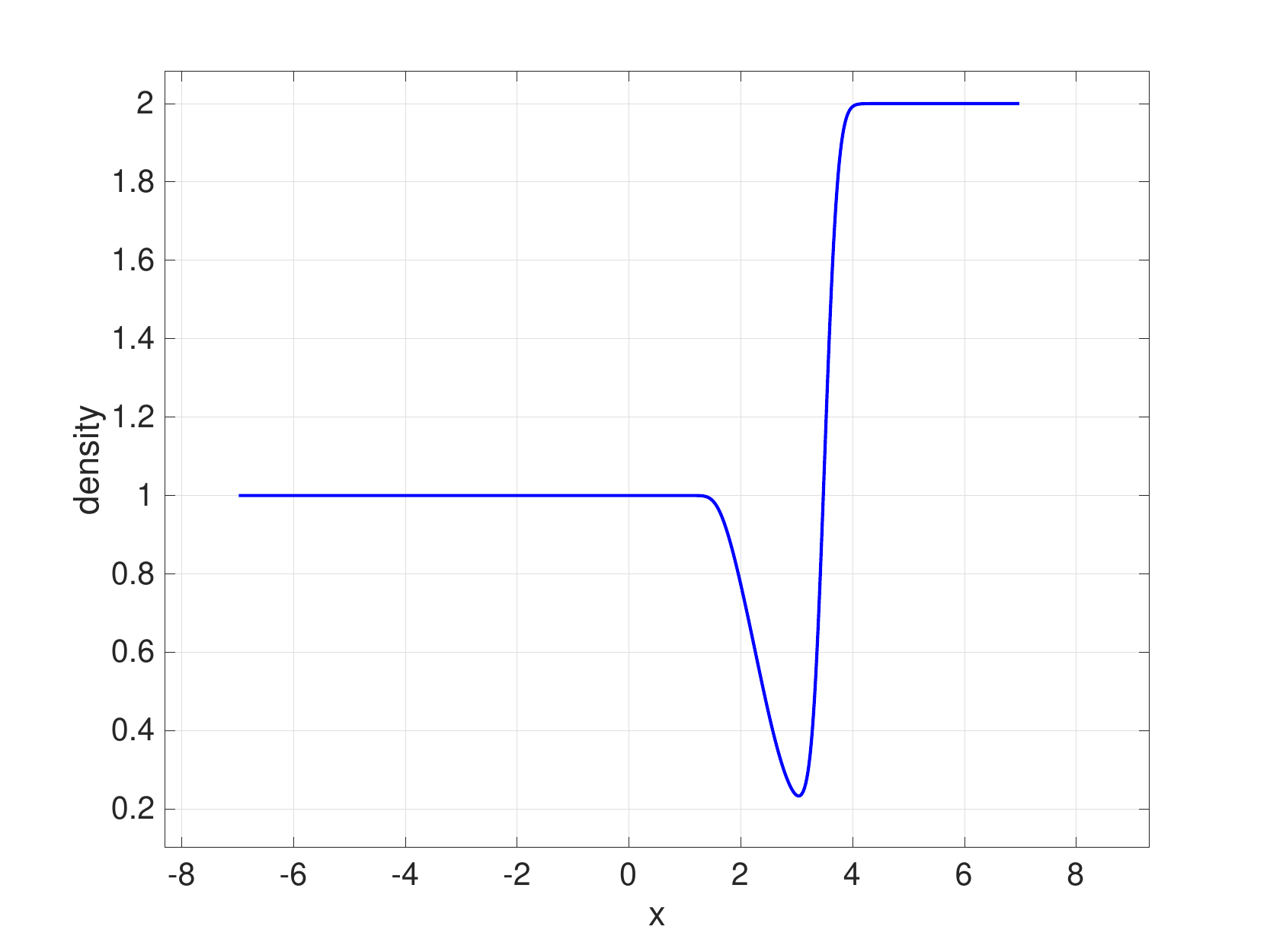}
	\end{minipage}\hfill
	\begin{minipage}{0.24\textwidth}
		\centering
		\includegraphics[width=\linewidth]{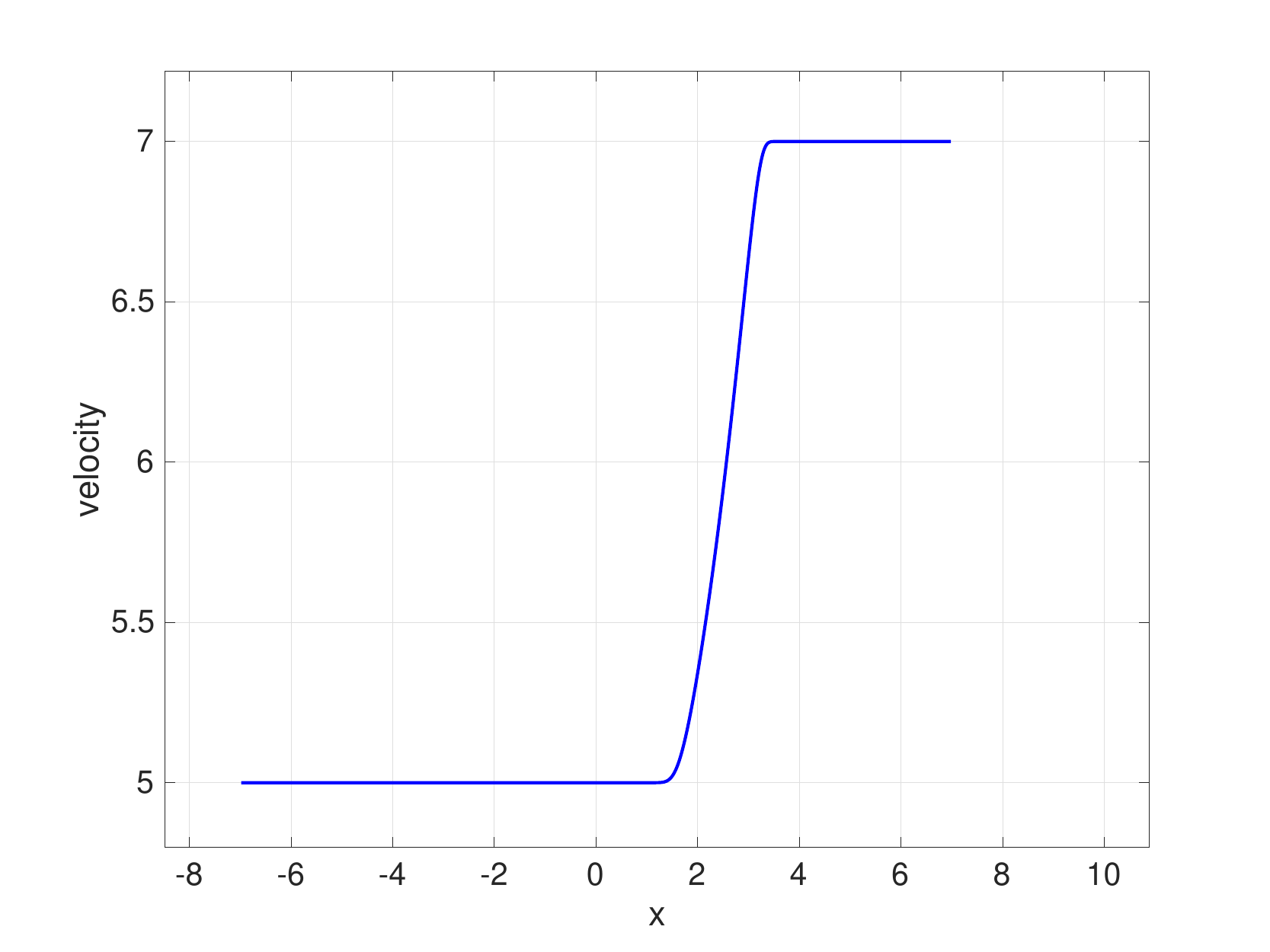}
	\end{minipage}\hfill
	\begin{minipage}{0.24\textwidth}
		\centering
		\includegraphics[width=\linewidth]{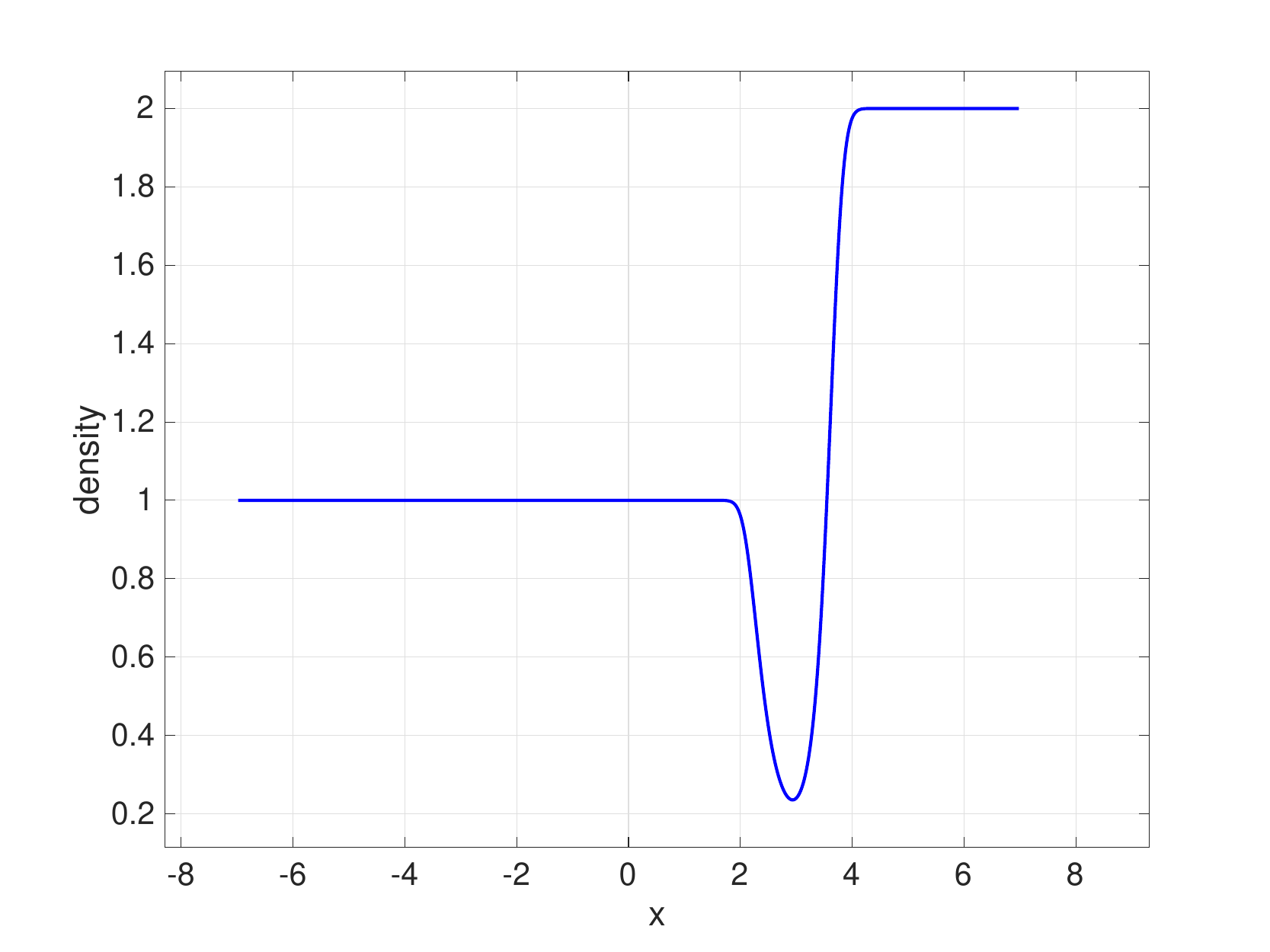}
	\end{minipage}\hfill
	\begin{minipage}{0.24\textwidth}
		\centering
		\includegraphics[width=\linewidth]{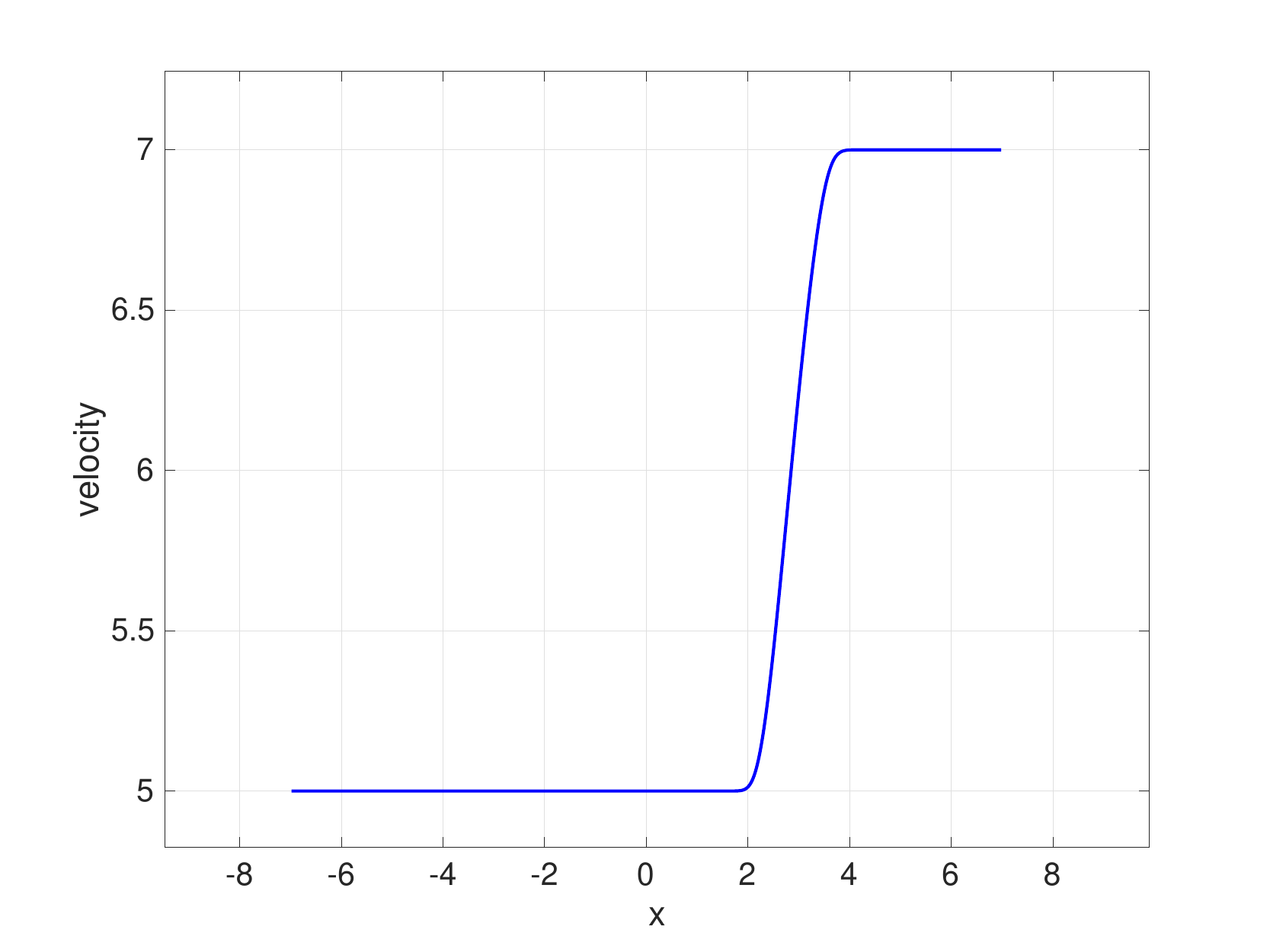}
	\end{minipage}
	\caption{Density and velocity for A=1, a=0.1 and A=0.01, a=0.001, respectively.}
	\label{p121}
\end{figure}

\begin{figure}[h!]
	\begin{minipage}{0.24\textwidth}
		\centering
		\includegraphics[width=\linewidth]{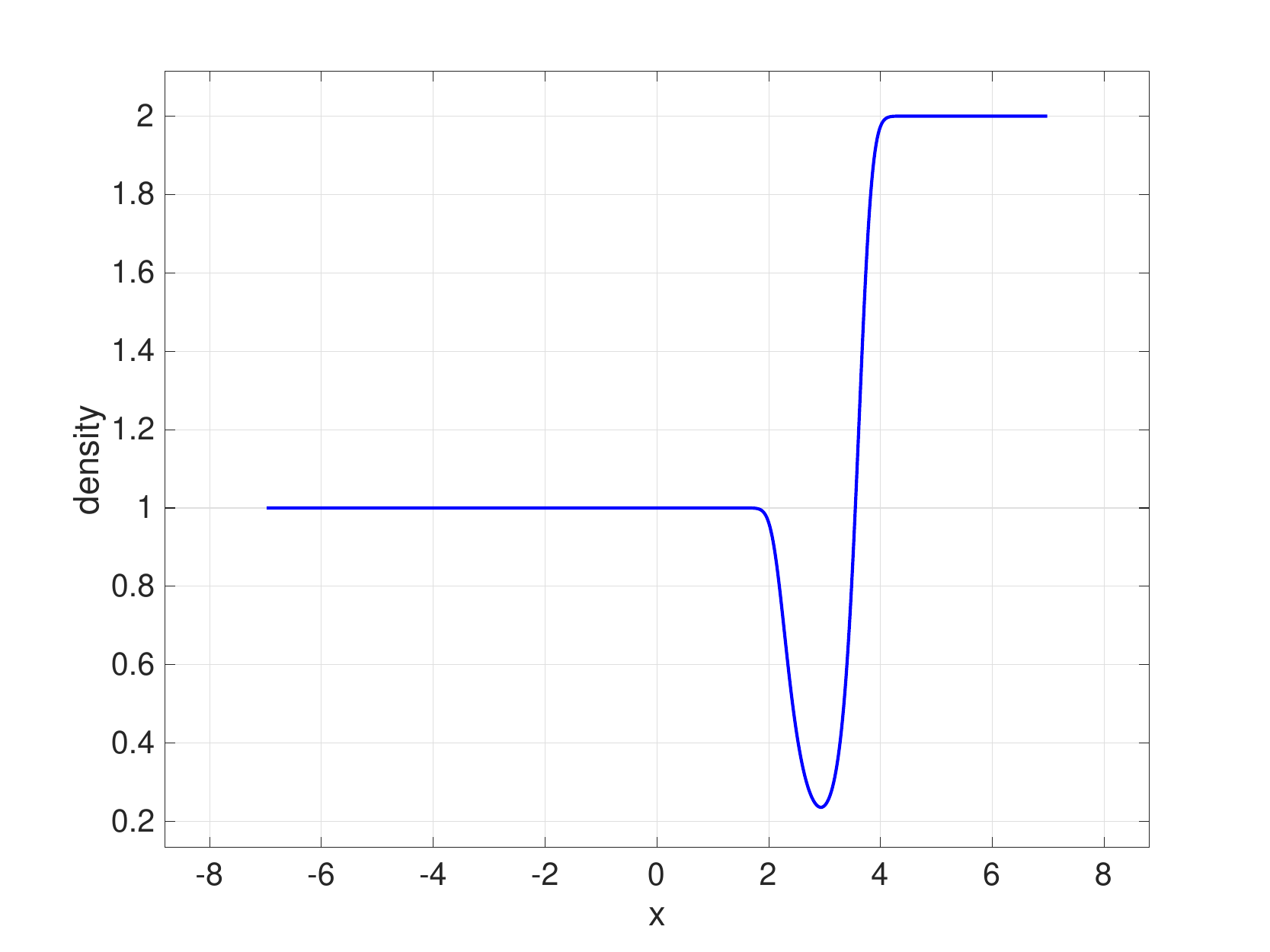}
	\end{minipage}\hfill
	\begin{minipage}{0.24\textwidth}
		\centering
		\includegraphics[width=\linewidth]{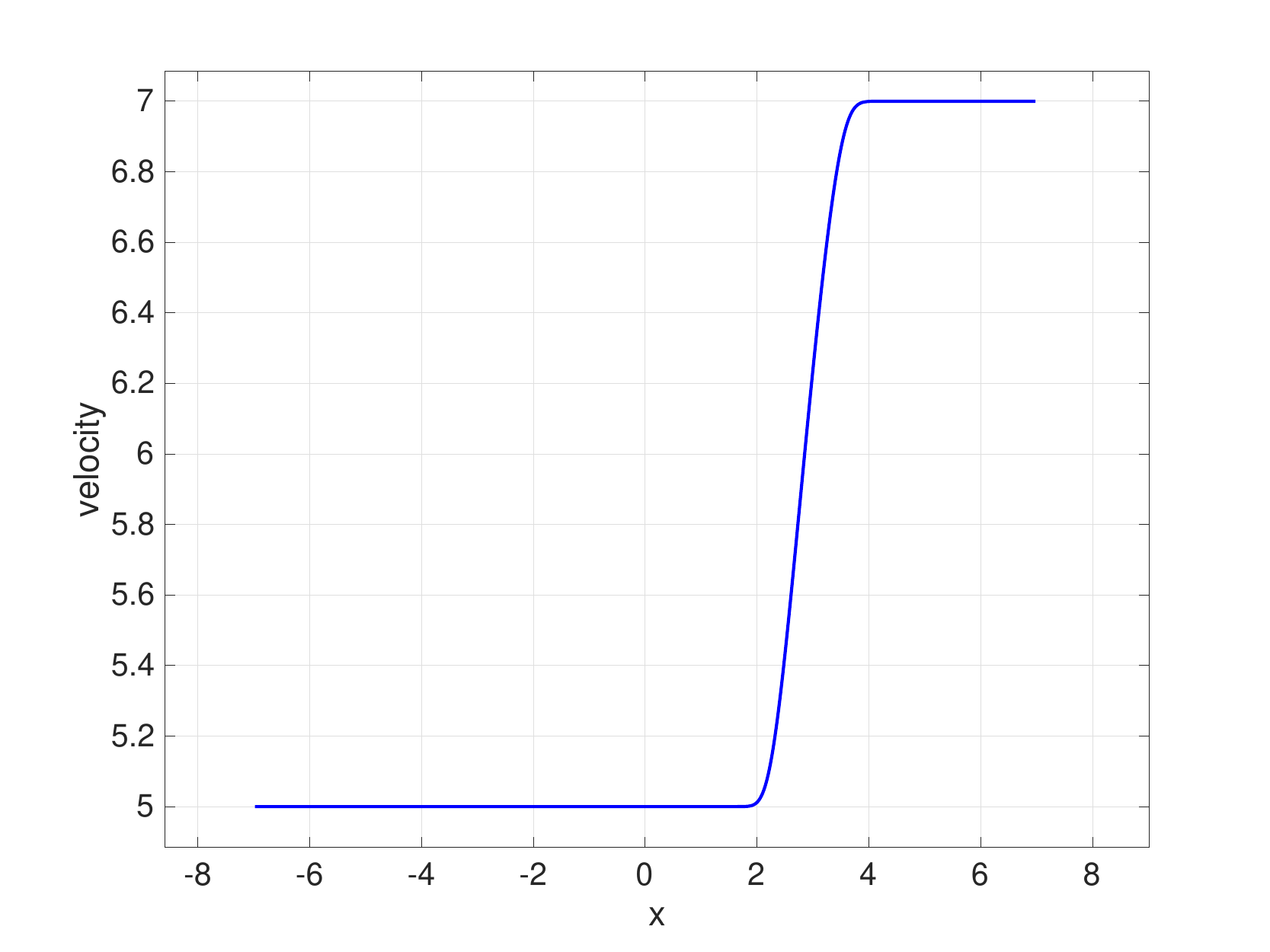}
	\end{minipage}\hfill
	\begin{minipage}{0.24\textwidth}
		\centering
		\includegraphics[width=\linewidth]{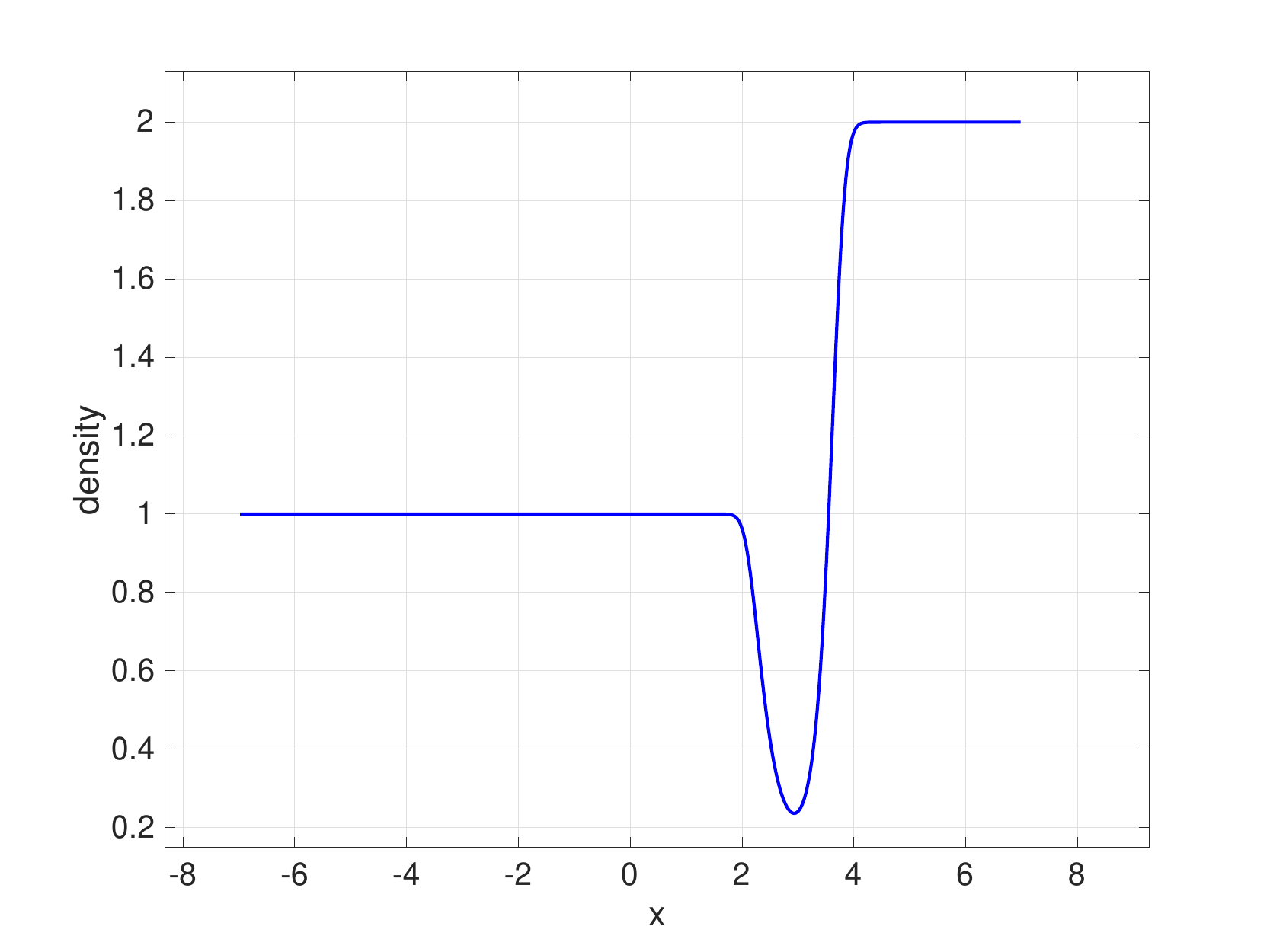}
	\end{minipage}\hfill
	\begin{minipage}{0.24\textwidth}
		\centering
		\includegraphics[width=\linewidth]{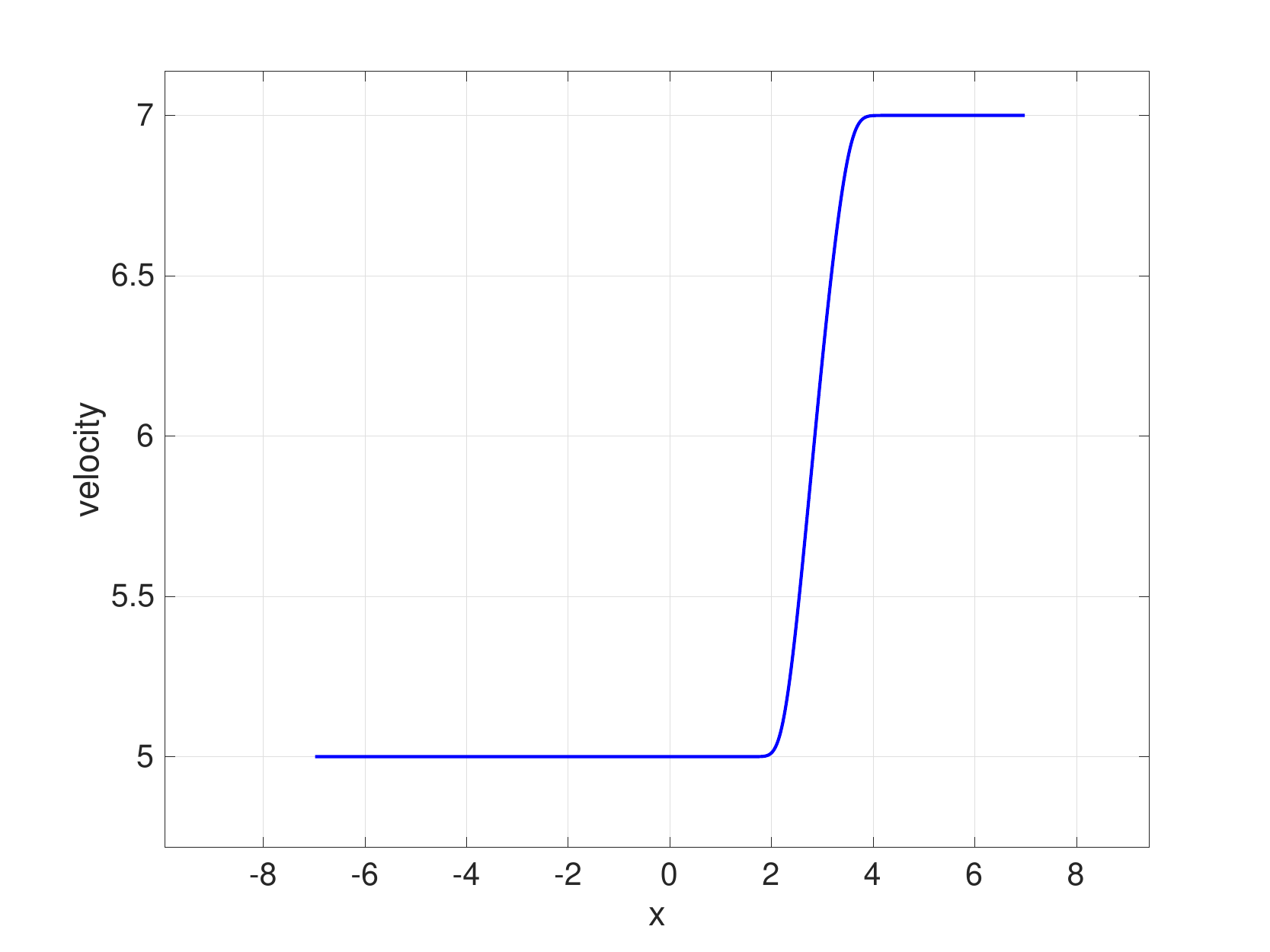}
	\end{minipage}
	\caption{Density and velocity for A=0.001, a=0.0001 and A=0.00001, a=0.000001, respectively.}
	\label{p122}
\end{figure}


\noindent		\textbf{Acknowledgements.} 
		The authors are  grateful to the reviewers for a thorough read of our paper and valuable comments/suggestions. The supports provided by UGC, India (Ref. No.:201610063559) and NBHM, India  (Ref. No.: NBHM(RP)/R\&D II/7857) are gratefully acknowledged by Priyanka and M. Zafar, respectively.\\
		
		\noindent\textbf{Data Availability.}  Data sharing is not applicable to this article as no new data were created or analyzed in this study.
		
\def\cprime{$'$}

	\end{document}